\newcommand{\arxiv}[1]{\href{http://arxiv.org/abs/#1}{\tt arXiv:\nolinkurl{#1}}}
\newcommand{\arXiv}[1]{\href{http://arxiv.org/abs/#1}{\tt arXiv:\nolinkurl{#1}}}
\newcommand{\googlebooks}[1]{(preview at \href{http://books.google.com/books?id=#1}{google books})}
\definecolor{dark-red}{rgb}{0.7,0.25,0.25}
\definecolor{dark-blue}{rgb}{0.15,0.15,0.55}
\definecolor{medium-blue}{rgb}{0,0,.8}
\definecolor{DarkGreen}{RGB}{0,150,0}
\theoremstyle{plain}
\newtheorem{thm}{Theorem}[section]
\newtheorem*{thm*}{Theorem}
\newtheorem{cor}[thm]{Corollary}
\newtheorem*{cor*}{Corollary}
\newtheorem{lem}[thm]{Lemma}
\newtheorem{prop}[thm]{Proposition}
\newtheorem*{quest*}{Question}
\newtheorem*{claim*}{Claim}
\theoremstyle{definition}
\newtheorem{defn}[thm]{Definition}
\newtheorem{note}[thm]{Note}
\newtheorem{exs}[thm]{Examples}
\newtheorem{ex}[thm]{Example}
\newtheorem{rem}[thm]{Remark}
\newtheorem{rems}[thm]{Remarks}
\DeclareMathOperator{\coeff}{coeff}
\DeclareMathOperator{\Out}{Out}
\DeclareMathOperator{\spann}{span}
\DeclareMathOperator{\Stab}{Stab}
\DeclareMathOperator{\Tr}{Tr}
\newcommand{\D}{\displaystyle}
\newcommand{\comment}[1]{}
\newcommand{\be}{\begin{enumerate}[(1)]}
\newcommand{\ee}{\end{enumerate}}
\newcommand{\Z}{\mathbb{Z}}
\newcommand{\I}{\infty}
\newcommand{\set}[2]{\left\{#1 \middle| #2\right\}}
\def\semicolon{;}
\def\applytolist#1{
    \expandafter\def\csname multi#1\endcsname##1{
        \def\multiack{##1}\ifx\multiack\semicolon
            \def\next{\relax}
        \else
            \csname #1\endcsname{##1}
            \def\next{\csname multi#1\endcsname}
        \fi
        \next}
    \csname multi#1\endcsname}
\def\calc#1{\expandafter\def\csname c#1\endcsname{{\mathcal #1}}}
\def\bbc#1{\expandafter\def\csname bb#1\endcsname{{\mathbb #1}}}
\def\bfc#1{\expandafter\def\csname bf#1\endcsname{{\mathbf #1}}}
\def\sfc#1{\expandafter\def\csname s#1\endcsname{{\sf #1}}}
\newcommand{\jw}[1]{f^{(#1)}}
\newcommand{\noshow}[1]{}
\newcommand{\MR}[1]{}
\newcommand{\TL}{\cT\hspace{-.08cm}\cL}
\tikzstyle{shaded}=[fill=red!10!blue!20!gray!30!white]
\tikzstyle{unshaded}=[fill=white]
\tikzstyle{empty box}=[circle, draw, thick, fill=white, opaque, inner sep=2mm]
\tikzstyle{annular}=[scale=.7, inner sep=1mm, baseline]
\tikzstyle{rectangular}=[scale=.75, inner sep=1mm, baseline=-.1cm]
\newcommand{\nbox}[5]{
	\draw[thick, #1] ($#2+(-.4,-.4)+(-#3,0)$) -- ($#2+(-.4,.4)+(-#3,0)$) -- ($#2+(.4,.4)+(#4,0)$) -- ($#2+(.4,-.4)+(#4,0)$) -- ($#2+(-.4,-.4)+(-#3,0)$); 
	\coordinate (a) at ($#2+(-#3,0)$);
	\coordinate (b) at ($#2+(#4,0)$);
	\node at ($1/2*(a)+1/2*(b)$) {$#5$};
}
\newcommand{\hashdef}[2]{\@namedef{#1}{#2}}
\newcommand{\hashlookup}[1]{\@nameuse{#1}}
\newcommand{\pathtographs}{diagrams/graphs/}
\newcommand{\bigraph}[1]{{\hspace{-3pt}\begin{array}{c}%
  \raisebox{-2.5pt}{\includegraphics[height=6mm]{\pathtographs \hashlookup{#1}}}%
\end{array}\hspace{-3pt}}}
\begin{document}
\title{Chirality and principal graph obstructions}
\author{David Penneys }
\date{\today}
\maketitle
\begin{abstract}
Determining which bipartite graphs can be principal graphs of subfactors is an important and difficult question in subfactor theory.
Using only planar algebra techniques, we prove a triple point obstruction which generalizes all known initial triple point obstructions to possible principal graphs.
We also prove a similar quadruple point obstruction with the same technique.
Using our obstructions, we eliminate some infinite families of possible principal graphs with initial triple and quadruple points which were a major hurdle in extending subfactor classification results  above index 5.
\end{abstract}

\section{Introduction}

Subfactor theory has many examples of unexpected discrete classification, beginning with Jones' index rigidity theorem \cite{MR696688}, which shows the index $[B\colon A]$ of a subfactor $A\subset B$ lies in the set $\{4\cos^2(\pi/k)|k\geq 3\}\cup[4,\infty]$.

We study a finite index subfactor by analyzing its standard invariant, which has many equivalent characterizations, such as Ocneanu's paragroups \cite{MR996454}, Popa's $\lambda$-lattices \cite{MR1334479}, and Jones' planar algebras \cite{math/9909027}.
The standard invariant also encodes the index and the principal graphs, which are the bipartite induction/restriction multi-graphs associated to tensoring with $\sb{A}B_B$ and $\sb{B}B_A$.

A second example of discrete classification is the $ADE$ classification of hyperfinite subfactors with index at most $4$, where $D_{\text{odd}}$ and $E_7$ do not occur \cite{MR996454,MR1278111}. 
Haagerup classified principal graphs of subfactors with index in the range $(4,3+\sqrt{3})$ \cite{MR1317352}, and recently, subfactor planar algebras with index less than 5 were completely classified \cite{MR2914056,MR2902285,MR2993924,MR2902286}. 
The recent survey \cite{bulletin} provides an excellent introduction to the subfactor classification program, along with state-of-the-art information on its progress.

These classifications have two main parts: restricting the list of possible principal graphs, and constructing examples when the graphs survive. 
The former task relies on \underline{principal graph obstructions}, which rule out many possible principal graphs by combinatorial constraints, or by relating local structure of the principal graph to data intrinsic to the subfactor planar algebra.

Several examples of the latter type of obstruction for triple points include Ocneanu's triple point obstruction \cite{MR1317352}, Jones' quadratic tangles obstruction \cite{MR2972458}, the triple-single obstruction \cite{MR2902285}, and Snyder's singly valent obstruction \cite{1207.5090}, which mutually generalizes the quadratic tangles and triple-single obstructions.

In this article, we prove a triple point obstruction which is strictly stronger than all known triple point obstructions for initial triple points, and a quadruple point obstruction of similar flavor.
The statement of the main theorem for initial triple points uses the following notation. 
Suppose that the principal graphs $(\Gamma_+,\Gamma_-)$ of a subfactor planar algebra have an initial triple point at depth $n-1$ (where $n\geq 2$), and that the projections one past the branch at depth $n$ of $\Gamma_+$ are labelled $P,Q$.
$$
\begin{tikzpicture}[baseline=-.1cm]
	\draw[fill] (-2,0) circle (0.05);
	\node at (-2,-.3) {\scriptsize{$0$}};	
	\draw (-2.,0.) -- (-1.,0.);
	\draw[fill] (-1,0) circle (0.05);
	\node at (-1,-.3) {\scriptsize{$1$}};
	\node at (-.5,0) {$\cdots$};
	\draw[fill] (0,0) circle (0.05);
	\node at (0,-.3) {\scriptsize{$n-2$}};
	\draw (0.,0.) -- (1.,0.);
	\draw[fill] (1.,0.) circle (0.05);
	\node at (1,-.3) {\scriptsize{$n-1$}};
	\draw (1.,0.) -- (2.,-0.5);
	\draw (1.,0.) -- (2.,0.5);
	\draw[fill] (2.,-0.5) circle (0.05);
	\node at (2.3,.5) {$Q$};
	\draw[fill] (2.,0.5) circle (0.05);
	\node at (2.3,-.5) {$P$};
	\node at (2,-.8) {\scriptsize{$n$}};	
\end{tikzpicture}
$$
In this case, $A=\frac{\Tr(P)}{\Tr(Q)}P-Q$ is a rotational eigenvector orthogonal to $\TL_{n,+}$ with rotational eigenvalue $\omega_A=\sigma_A^2$ (see Subsection \ref{sec:Annular} and the beginning of Section \ref{sec:TriplePoints}). We refer to $\omega_A$ (and sometimes $\sigma_A$) as the \underline{chirality} of the subfactor planar algebra.

\begin{thm*}
Suppose that for each $R$ at depth $n+1$ connected to $P$, there is a unique vertex $E(\overline{R})$ at depth $n$ connected to the dual vertex $\overline{R}$ of $R$.
Then there is an explicit formula for $\sigma_A+\sigma_A^{-1}$ in terms of the traces of the projections of $\Gamma_\pm$ with depth at most $n+1$. 
(See Theorem \ref{thm:TriplePoint} and Remark \ref{rem:Calculate} for more details.)
\end{thm*}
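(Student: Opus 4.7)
The plan is to adapt Jones' quadratic-tangles strategy: evaluate a single planar-algebra scalar associated with $A$ in two different ways. Evaluated combinatorially from the branching structure of $\Gamma_\pm$ at depths $\le n+1$, this scalar becomes an explicit combination of the traces listed in the statement. Evaluated via the rotational eigenvalue $\rho(A) = \sigma_A^2 A$ and its adjoint, the same scalar becomes a multiple of $\sigma_A + \sigma_A^{-1}$. Equating the two expressions solves for $\sigma_A + \sigma_A^{-1}$.

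Concretely, I would take a tangle $\tau(x,y)$ of two inputs such that $\Tr(\tau(A,A))$ pairs $A\cdot A$ against minimal projections at depth $n+1$. Since $A = \tfrac{\Tr(P)}{\Tr(Q)}P - Q$ is supported at depth $n$ and orthogonal to $\TL_{n,+}$, the multiplication $A^2 \in P_{n,+}$ decomposes as a Temperley-Lieb piece (computable from $\Tr(P)$, $\Tr(Q)$, and the trace at depth $n-1$) plus an explicit linear combination of $P$ and $Q$; attaching a strand and extracting the $R$-component for each projection $R$ at depth $n+1$ connected to $P$ expresses the combinatorial side as a sum over such $R$ with weights in $\Tr(R)$, $\Tr(P)$, $\Tr(Q)$. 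For the rotated side, apply $\rho$ to one input to produce a factor $\sigma_A^2$ on that copy; taking the adjoint version (rotation in the opposite direction) and symmetrizing normalizes this to the prefactor $\sigma_A + \sigma_A^{-1}$. The rotated diagram naturally lives on $\Gamma_-$, and the hypothesis that each $\overline{R}$ has a unique depth-$n$ neighbor $E(\overline{R})$ in $\Gamma_-$ collapses the dual-side expansion to a single term per $R$ with weight determined by $\Tr(E(\overline{R}))$.

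The main obstacle is the rotated evaluation: in general, the dual-side expansion of a rotated element at depth $n+1$ requires knowing inner products between $\overline{R}$ and each of its depth-$n$ neighbors in $\Gamma_-$, which are finer data than the bare traces at depth $\le n+1$. The univalence hypothesis reduces the set of neighbors of $\overline{R}$ to the singleton $\{E(\overline{R})\}$, so the would-be sum collapses to a single scalar determined by the trace ratio $\Tr(R)/\Tr(E(\overline{R}))$. Once this collapse is justified, the rest is standard diagrammatic bookkeeping — tracking Jones-Wenzl corrections, sign conventions from orientation reversal, and scalar normalizations on $\tau$ — to produce the explicit formula for $\sigma_A + \sigma_A^{-1}$ referenced in the statement.
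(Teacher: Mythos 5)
Your proposal captures the correct overall strategy — a quadratic planar-algebra identity with one side evaluated via the rotational eigenvalue (giving $\sigma_A + \sigma_A^{-1}$) and the other via the branching combinatorics at depth $\le n+1$, with the univalence hypothesis on $\overline{R}$ collapsing the dual-side expansion to one term. That much is faithful to what the paper does. But there is a genuine gap: you have not identified the specific quadratic identity, and the one you implicitly reach for (a tangle $\tau(A,A)$ built from $A^2$, tracing against depth-$(n+1)$ projections) is not the one the paper uses and does not obviously carry the necessary information. The element $A^2$ lives in $\cP_{n,+}$ and equals $(r-1)A + r\jw{n}$ — it is entirely a depth-$\le n$ object. ``Attaching a strand'' to land at depth $n+1$ is exactly where the hard content lies, and the paper handles it by \emph{Liu's relation} (Lemma \ref{lem:Liu}), a variant of Wenzl's relation: one applies the generalized Wenzl relation (Lemma \ref{lem:GeneralizedWenzl}) to the \emph{projection} $P$, rotates by $180^\circ$, and caps, yielding
$$\cF(\overline{P})^*\cF(\overline{P}) = (\text{Temperley--Lieb terms}) - \frac{\Tr(P)}{[n]}\,\cap_{n+1}(\overline{P'}).$$
The rotation $\cF$ appears on only one side, and the depth-$(n+1)$ data enters only through $\cap_{n+1}(\overline{P'})$. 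One then extracts the coefficient of $\check{A}$ (or $A$, if $n$ odd) from both sides: on the left, $P = \frac{\jw{n}+A}{1+r}$ expands into four terms, the $\jw{n}$--$A$ and $A$--$\jw{n}$ cross terms supplying $\sigma_A+\sigma_A^{-1}$ via $\cF(A) = \frac{\sqrt{r}}{\sqrt{\check r}}\sigma_A\check A$ and the single-cup diagram of $\jw{n}$ with coefficient $(-1)^{n-1}/[n]$; on the right, the hypothesis on $E(\overline{R})$ lets you write $\cap_{n+1}(\overline{P'})$ purely in terms of traces.

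The distinction matters because the quadratic is in $P$ (equivalently in $\jw{n}+A$), not in $A$ alone. A purely $A$-quadratic identity is essentially Jones' original quadratic-tangles argument from \cite{MR2972458}, and the paper recovers that result only as a corollary (Corollary \ref{cor:SinglyValent}), in the case where $P$ is singly valent so that $P'=0$ and the depth-$(n+1)$ side disappears. Your ``symmetrize the rotation to get $\sigma_A+\sigma_A^{-1}$'' and ``collapse the dual side'' steps are the right instincts, but without Wenzl's relation generating $P'$ and hence the right-hand combinatorial term, the two evaluations you describe do not actually land on the same scalar. The missing idea is precisely Liu's observation that applying Wenzl, rotating, and capping produces an identity with the one-click rotation isolated on one side and $\cap_{n+1}(\overline{P'})$ on the other.
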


The importance of this formula is that it gives us the chirality, which is a priori hidden in the planar algebra structure, in terms of visible combinatorial data of the principal graph.
As corollaries, we obtain the obstructions of Jones and Snyder (Corollary \ref{cor:SinglyValent}) and a stronger version of Ocneanu's obstruction for initial triple points.

\begin{cor*}
Under the hypotheses of Ocneanu's obstruction for initial triple points, $\sigma_A+\sigma_A^{-1}=[n+2]-[n]$. (See Theorem \ref{thm:Ocneanu} for more details.)
\end{cor*}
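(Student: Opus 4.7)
The plan is to derive this corollary as a direct specialization of Theorem \ref{thm:TriplePoint}. Ocneanu's hypotheses for the initial triple point case require that \emph{both} principal graphs $\Gamma_\pm$ have initial triple points at depth $n-1$, together with a compatibility between the branches via the contragredient pairing. I would first check that under these hypotheses the combinatorial condition of the main theorem is automatic: every vertex $R$ at depth $n+1$ adjacent to $P$ has $\overline{R}$ connected to a unique vertex at depth $n$ of $\Gamma_-$, essentially because the initial triple point assumption on $\Gamma_-$ leaves only one possible such vertex.

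Next I would apply Theorem \ref{thm:TriplePoint} to write $\sigma_A + \sigma_A^{-1}$ as the explicit expression in the traces of the depth-$n$ and depth-$(n+1)$ projections of $\Gamma_\pm$. Because there is a full initial segment of length $n-1$ below the triple point, the first $n$ projections on each graph are the Jones-Wenzl projections, and their traces are the quantum integers $[k]$ in the loop modulus $\delta$ of the subfactor. Likewise, the depth-$n+1$ projections satisfy trace identities governed by the Jones-Wenzl recursion restricted to the branches.

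With these substitutions in hand, the remaining work is a calculation in quantum integers. Using the standard identities $[k][2] = [k-1]+[k+1]$ and $[k+1][k-1] = [k]^2 - 1$, together with the constraint from the branch (which forces the sum of traces over $P$ and $Q$ to equal $[n-1][2]$ at depth $n$), the explicit formula should collapse to $[n+2] - [n]$.

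The main obstacle is bookkeeping rather than conceptual: one must carefully track the contributions of $P$ versus $Q$ under the normalization $A = \frac{\Tr(P)}{\Tr(Q)}P - Q$ and match them against the dual-side contributions in the main theorem's formula. Once this matching is done, the quantum integer simplification is routine, and the classical Ocneanu obstruction $\omega_A = \sigma_A^2$ taking one of finitely many values is recovered as a consequence of the formula $\sigma_A + \sigma_A^{-1} = [n+2]-[n]$.
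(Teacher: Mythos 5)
Your high-level strategy is the same one the paper uses: specialize Theorem \ref{thm:TriplePoint}, reduce the coefficient of $\check{A}$ (or $A$) in $\cap_{n+1}(\overline{P'})$ to trace data, and simplify with quantum-integer identities. But the concrete steps you sketch are not the ones that make the computation close, and the one explicit identity you state is wrong, so as written the plan would not produce $[n+2]-[n]$.

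The claimed constraint ``$\Tr(P)+\Tr(Q)=[n-1][2]$ at depth $n$'' is incorrect. Since $P+Q=\jw{n}$, the correct relation is $\Tr(P)+\Tr(Q)=\Tr(\jw{n})=[n+1]$, which with $r=\Tr(Q)/\Tr(P)$ gives $\Tr(P)=\frac{[n+1]}{1+r}$. (Relatedly, for $A$ to have trace zero one needs $A=\frac{\Tr(Q)}{\Tr(P)}P-Q$, not $\frac{\Tr(P)}{\Tr(Q)}P-Q$ — the paper's introduction has the same slip, but Section~\ref{sec:TriplePoints} uses the correct normalization, and you should too.) The two genuinely load-bearing facts your outline does not identify are: (i) under Ocneanu's hypotheses the dual of every depth-$(n+1)$ neighbor of $P$ connects only to a single fixed depth-$n$ vertex (e.g.\ $\check{P}$ when $n$ is even and $\overline P=P$), so $\cap_{n+1}(\overline{P'})=\frac{\Tr(P')}{\Tr(P)}\check{P}$, with
$$\Tr(P')=[2]\Tr(P)-[n]=\frac{[n+2]-r[n]}{[1+r]}\cdot(1+r)^{-1}\cdot(1+r)=\frac{[n+2]-r[n]}{1+r}$$
coming from Wenzl's recursion applied to $P$; and (ii) the coefficient $\underset{\in\check{P}}{\coeff}(\check{A})=\frac{1}{1+\check{r}}$ arising from $\check{P}=\frac{\jw{n}+\check{A}}{1+\check{r}}$, together with $r=\check{r}$. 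Feeding these into Equation \eqref{eqn:EvenObstructionSelfDual} gives $\sigma_A+\sigma_A^{-1}=(r-1)[n]+[n+1]\cdot\frac{\Tr(P')}{\Tr(P)}=(r-1)[n]+([n+2]-r[n])=[n+2]-[n]$; the odd and $\overline P=Q$ cases are analogous. Without these specific formulas your ``bookkeeping'' step has no hope of collapsing — with the incorrect $[n-1][2]$ input the quantum-integer simplification simply does not produce $[n+2]-[n]$.
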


After obtaining the chirality using the above corollary, we provide a quick proof that $D_{\text{odd}}$ and $E_7$ are not principal graphs by showing that the chirality is incompatible with the supertransitivity (see Remark \ref{rem:IndexAtMost4} and Examples \ref{exs:IndexAtMost4}). 

In Theorem \ref{thm:MagicNumbers11}, we obtain an obstruction for annular multiplicities $*11$ principal graphs, which were a major hurdle in extending subfactor classification results above index 5 (see Subsection \ref{sec:Annular} for the definition of annular multiplicities). 
In particular, we prove the following theorem, which eliminates a particular example arising from running the principal graph odometer \cite{MR2914056} above index 5.

\begin{thm*}
There is no subfactor with principal graphs a translated extension of 
$$
\cW=\left(\bigraph{bwd1v1v1p1v0x1p1x0p1x0v1x0x0p1x0x0p0x1x0p0x0x1v1x0x0x0p0x1x0x0p0x0x1x0p0x0x1x0p0x0x0x1v1x0x0x0x0p0x1x0x0x0p0x0x0x1x0p0x0x0x0x1p0x0x0x0x1duals1v1v2x1x3v1x3x2x5x4}, \bigraph{bwd1v1v1p1v1x0p1x0p0x1v1x0x0p0x0x1p0x1x0p0x0x1v1x0x0x0p0x1x0x0p0x0x1x0p0x0x1x0p0x0x0x1v0x0x1x0x0p1x0x0x0x0p0x0x0x0x1p1x0x0x0x0p0x1x0x0x0duals1v1v1x3x2v3x4x1x2x5}\right).
$$
(See Definition \ref{defn:translation} for the definition of ``translated extension," and see Subsection \ref{sec:11Weeds} for more details on eliminating $\cW$.)
\end{thm*}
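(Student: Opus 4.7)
The strategy is to apply the annular multiplicities $*11$ obstruction of Theorem \ref{thm:MagicNumbers11} directly to $\cW$, and then transport the resulting contradiction to every translated extension. The key point is that translation (in the sense of Definition \ref{defn:translation}) only inserts a Temperley--Lieb-like tail below the triple point; it leaves intact the local data on which the triple-point formula depends, namely the two projections $P,Q$ one step past the branch, their neighbors at depth $n+1$ on $\Gamma_+$, and the dual map connecting $\Gamma_+$ and $\Gamma_-$ near the branch. The only piece of the chirality formula that moves under translation is the offset $[n+2]-[n]$, which is an explicit function of the supertransitivity $n$ and of the graph norm $\delta$.

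First I would decode the bigraph data of $\cW$: locate the initial triple point on $\Gamma_+$, label the two depth-$n$ projections $P$ and $Q$, identify their depth-$(n+1)$ neighbors, and verify the structural hypothesis required by Theorem \ref{thm:TriplePoint}---that for every vertex $R$ at depth $n+1$ attached to $P$, the dual vertex $\overline{R}$ is attached on $\Gamma_-$ to a unique depth-$n$ vertex $E(\overline{R})$. Next, I would use Perron--Frobenius theory to write the relevant trace ratios $\Tr(P)/\Tr(Q)$ and the traces of the depth-$(n+1)$ projections as rational functions of $\delta$, plug them into the formula of Theorem \ref{thm:MagicNumbers11}, and obtain a closed-form expression $\sigma_A + \sigma_A^{-1} = F(\delta)$. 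Because translation only modifies $F$ through the controllable $[n+2]-[n]$ term, producing an inequality for $\cW$ that holds for all $n$ suffices to handle all translated extensions simultaneously.

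The contradiction should then come from the fact that $\sigma_A$ is a unit complex number, so $\sigma_A + \sigma_A^{-1} \in [-2,2]$, while the computed $F(\delta)$ lies outside this interval for all admissible $\delta$ (i.e.\ all $\delta^2$ in the subfactor index range compatible with $\cW$). The main obstacle I anticipate is bookkeeping rather than conceptual: carefully tracking the dual involution through the compressed bigraph encoding, and handling the possibility that the straightforward inequality $|F(\delta)| > 2$ is not sharp enough on its own. In that case I would combine the $*11$ identity with a second rotational consistency constraint---for example, by repeating the computation with $P$ and $Q$ swapped or by applying the quadruple-point version of the obstruction at the neighboring branch of $\Gamma_-$---to produce a system of equations for $\sigma_A$ that has no solution on the unit circle.
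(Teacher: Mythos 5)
Your high-level plan --- apply the $*11$ obstruction of Theorem \ref{thm:MagicNumbers11} to express $\sigma_A+\sigma_A^{-1}$ combinatorially and show the value leaves $[-2,2]$ --- matches the paper's strategy, but the way you propose to handle the translation parameter contains a genuine gap.

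You assert that translation ``leaves intact the local data on which the triple-point formula depends'' and that ``the only piece of the chirality formula that moves under translation is the offset $[n+2]-[n]$.'' This is not true. The branch factor $r=\Tr(Q)/\Tr(P)$ that enters the formula of Theorem~\ref{thm:MagicNumbers11}(2), namely $\sigma_A+\sigma_A^{-1}=r[n]-[n+2]/r$, is determined by the Perron--Frobenius eigenvector of the \emph{entire} graph; lengthening the initial $A_n$ tail changes it. The paper's proof computes $r=\check r$ as an explicit rational function of \emph{two independent} variables $n$ and $q$, writes $\sigma_A+\sigma_A^{-1}+2 = g(n,q)h(n,q)/k(n,q)$ with $g,h,k$ polynomials in $q$ and $a:=q^n$, and establishes the sign of each factor separately over the region $n\geq 2$, $q\geq 1.6789$ by grouping the numerator by powers of $a$ and checking nonnegativity of the partial sums. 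That two-variable analysis is the actual content of the elimination; it does not collapse to a one-variable inequality $|F(\delta)|>2$.

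Two further problems with the proposed execution. First, $\cW$ is a weed, so the graph beyond the shown truncation is unspecified and $\delta$ is not a function of what you can see --- it is only bounded below. Writing the traces of the depth-$(n+1)$ projections ``as rational functions of $\delta$'' via Perron--Frobenius silently assumes a fixed finite graph; the correct device is the relative-dimensions technique of \cite[Section 4.1]{MR2902285}, which the paper invokes and which leaves the extension open while expressing trace ratios in $(n,q)$. Second, your fallbacks do not apply: swapping $P$ and $Q$ is not a valid second instance of Theorem~\ref{thm:MagicNumbers11} because the two depth-$n$ vertices play asymmetric roles (exactly one is bivalent and the theorem's formula is tied to that choice), and $\Gamma_-$ in $\cW$ also has an initial triple point, so there is no quadruple-point obstruction available on the dual side.
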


Our techniques also apply to initial quadruple points which are even translated extensions of 
$$
\cQ=\left(\bigraph{bwd1v1p1p1duals1v1x3x2},\bigraph{bwd1v1p1p1duals1v1x3x2}\right).
$$
In this case, there are two rotational eigenvectors $A,B$ orthogonal to Temperley-Lieb one past the branch, and the chiralities $\omega_A=\sigma_A^2$ and $\omega_B=\sigma_B^2$ are distinct (see Proposition \ref{prop:LowWeight}). 
We get three equations in Theorem \ref{thm:QuadruplePoint}: two for the chirality $\sigma_A$, and one involving both $\sigma_A$ and $\sigma_B$. 
In a specific case, we obtain a quadruple point obstruction similar to Ocneanu's triple point obstruction in Theorem \ref{thm:OcneanuQuadruple}. 
We use this obstruction to eliminate two weeds in Corollary \ref{cor:EliminateQ}.

This article only relies on planar algebra techniques, and the one click rotation 
$$
\cF=
\begin{tikzpicture}[baseline=-.1cm]
	\draw (0,-.8)--(0,.8);
	\node at (0,1) {\scriptsize{$n-1$}};
	\node at (0,-1) {\scriptsize{$n-1$}};
	\draw (.2,.4) arc (180:0:.2cm) -- (.6,-1);
	\draw (-.2,-.4) arc (0:-180:.2cm) -- (-.6,1);
	\nbox{unshaded}{(0,0)}{0}{0}{}
\end{tikzpicture}
$$ 
plays a crucial role.
In particular, we use a quadratic relation due to Liu (Lemma \ref{lem:Liu}) which is a clever variant of Wenzl's relation \cite{MR873400}.
Such relations restrict the structure of the principal graph via the skein theory of the planar algebra, giving strong consequences. 
(For another such example of this phenomenon, see \cite{1208.1564}.)
Liu's relation is effective because it only involves four terms, two of which are in Temperley-Lieb and can be ignored for our purposes, and the one click rotation only appears on one side of the relation.

As the hypotheses of the main theorem above are quite general for initial triple and quadruple points, we expect that the obstructions obtained in this article will be very powerful in the classification of subfactors.

\subsection{Acknowledgements}

The author would like to thank Zhengwei Liu, Scott Morrison, and Noah Snyder for helpful conversations. 
Most of this work was conceived and completed during a workshop on fusion categories held at the Institut de Math\'{e}matiques de Bourgogne. The author would like to thank the organizers Peter Schauenburg and Siu-Hung Ng for their hospitality. 
The author was supported in part by the Natural Sciences and Engineering Research Council of Canada and DOD-DARPA grant HR0011-12-1-0009.

\section{Background}\label{sec:Background}

We refer the reader to \cite{MR2972458,MR2979509} for the definition of a subfactor planar algebra. 

\subsection{Notation for planar algebras}\label{sec:Notation}

In this article, $\cP_\bullet$ denotes a subfactor planar algebra of modulus $[2]$ where 
$$
[k]=\frac{q^k-q^{-k}}{q-q^{-1}},
$$ 
and $q\in \set{\exp\left(\frac{2\pi i}{2j}\right)}{j\geq 3}\cup[1,\I)$ such that $[2]=q+q^{-1}$.
We denote the Temperley-Lieb subfactor planar subalgebra by $\TL_\bullet$.

When we draw planar diagrams, we often suppress the external boundary disk. 
In this case, the external boundary is assumed to be a large rectangle whose distinguished interval contains the upper left corner. 
We draw one string with a number next to it instead of drawing that number of parallel strings. 
Since the shading can be inferred from the distinguished interval, the number of strands, and whether an element is in $\cP_{k,\pm}$, we omit it completely.
Finally, we usually draw elements of our planar algebras as rectangles with the same number of strands emanating from the top and bottom, and the distinguished interval is always on the left unless it is marked otherwise.

We refer the reader to \cite{MR999799,1208.1564,bulletin} for the definition of the principal graphs $(\Gamma_+,\Gamma_-)$ of $\cP_\bullet$.
If there is only one projection $P$ in the equivalence class $[P]$ corresponding to a vertex of $\Gamma_\pm$, then we identify $[P]$ with $P$.

For a projection $P\in\cP_{k,\pm}$, the dual projection $\overline{P}$ is given by
$$
\overline{P}
=
\begin{tikzpicture}[baseline=-.1cm]
	\clip (-1.1,-.9)--(-1.1,.9)--(1.1,.9)--(1.1,-.9);	
	\draw (0,.4) arc (180:0:.4cm)--(.8,-.8);
	\draw (0,-.4) arc (0:-180:.4cm)--(-.8,.8);
	\draw[thick, unshaded] (-.4, -.4) -- (-.4, .4) -- (.4, .4) -- (.4,-.4) -- (-.4, -.4);
	\node at (0,0) {$P$};
	\node at (-1,.6) {{\scriptsize{$k$}}};
	\node at (1,-.6) {{\scriptsize{$k$}}};
\end{tikzpicture}.
$$
For a vertex $[P]$ of $\Gamma_\pm$ at depth $n$, there is a corresponding dual vertex $[\overline{P}]$ necessarily at depth $n$.
If $n$ is even then $[\overline{P}]$ is a vertex of $\Gamma_+$, but if $n$ is odd, then $[\overline{P}]$ is a vertex of $\Gamma_-$.

When we draw principal graph pairs, we use the convention that the vertical ordering of vertices at a given odd depth determines the duality; the lowest vertices in each graph at each odd depth are dual to each other, etc.
When we specify the duality at even depths (sometimes we omit this data), the duality is represented by red arcs joining dual pairs of vertices. 
Self-dual even vertices have a small red dash above them.

\begin{ex}
The Haagerup subfactor \cite{MR1317352,MR1686551} has principal graphs 
$$
\left(\bigraph{bwd1v1v1v1p1v1x0p0x1v1x0p0x1duals1v1v1x2v2x1}, \bigraph{bwd1v1v1v1p1v1x0p1x0duals1v1v1x2}\right).
$$
\end{ex}

\subsection{Supertransitivity and annular multiplicities}\label{sec:Annular}

We rapidly recall the notions of supertransitivity and annular multiplicities for subfactor planar algebras and potential principal graphs following \cite{MR1929335,MR2972458,MR2902285,bulletin}.

\begin{defn}
A subfactor planar algebra $\cP_\bullet$ is called \underline{$k$-supertransitive} if $\cP_{j,+}=\TL_{j,+}$ for all $0\leq j\leq k$. Equivalently, $\cP_\bullet$ is $k$-supertransitive if the truncation $\Gamma_+(k)$ of $\Gamma_+$ to depth $k$ is $A_{k+1}$.
This gives us the notion of the supertransitivity of a potential principal graph.
\end{defn}

\begin{rem}
When we say a subfactor planar algebra or potential principal graph $\Gamma_\pm$ is $k$-supertransitive, we usually also mean that $\TL_{k+1,+}\subsetneq \cP_{k+1,+}$ or $\Gamma_\pm(k+1)\neq A_{k+2}$, although strictly speaking, this is an abuse of nomenclature.
\end{rem}

Every subfactor planar algebra decomposes into an orthogonal direct sum of irreducible annular Temperley-Lieb submodules.
Each irreducible representation that appears in the direct sum is generated by a single low weight rotational eigenvector at some depth $n$, for which the rotational eigenvalue is an $n$-th root of unity \cite{MR1929335}.
(Other eigenvalues are possible for $0$-boxes which do not occur in subfactor planar algebras.)
Hence $\cP_{n,\pm}$ is the direct sum of the annular consequences $AC_{n,\pm}$ of $\cP_{n-1,\pm}$ and the new low weight vectors at depth $n$.

\begin{defn}
The sequence of \underline{annular multiplicities} of a subfactor planar algebra is the sequence of multiplicities of lowest weight 
vectors, ignoring eigenvalues, i.e.,
$$
a_n = \dim(\cP_{n,+})-\dim(AC_{n,+}).
$$
If $\cP_\bullet$ is $n-1$ supertransitive, then $a_n$ is just called the \underline{multiplicity}.
\end{defn}

\begin{rem}
A formula for the annular multiplicities in terms of the principal graph is given in \cite{MR1929335}. We give it here for the reader's convenience. First, $\dim(\cP_{j,\pm})=L_{j}$, the number of loops of length $2j$ on $\Gamma_\pm$ starting at $\star$. We then have
$$
a_k = \sum_{j=0}^k (-1)^{j-k}\frac{2k}{k+j}{k+j \choose k-j}L_j.
$$
Thus it makes sense to discuss the annular multiplicities of a subfactor planar algebra or of a candidate principal graph pair.
\end{rem}

The zeroth annular multiplicity $a_0$ of a subfactor planar algebra is always 1, as the empty diagram generates $\TL_\bullet$ as an annular Temperley-Lieb module. 
If $\cP_\bullet$ is $k$-supertransitive, then for all $1\leq j\leq k$, $a_j=0$, since $\cP_{j,\pm}=\TL_{j,\pm}$. 
For simplicity of notation, if $\cP_\bullet$ is $k$-supertransitive, but not $k+1$-supertransitive, we denote the initial chain $(a_0=1,a_1=0,\dots,a_{k}=0)$ of annular multiplicities by an asterisk, and we begin the sequence after the asterisk with $a_{k+1}$.  

\begin{defn}\label{defn:translation}
We say $\Gamma_\pm$ is a \underline{translation} of $\Gamma_\pm^0$ if $\Gamma_\pm$ is obtained from $\Gamma_\pm^0$ by increasing the supertransitivity. (If the even dual data of $\Gamma_\pm^0$ is specified, we only allow even translations.)
We say $\Gamma_\pm$ is an \underline{extension} of $\Gamma_\pm^0$ if $\Gamma_\pm$ is obtained from $\Gamma_\pm^0$ by adding new vertices and edges at strictly greater depths than the maximum depth of any vertex in $\Gamma_\pm^0$.

A \underline{weed} is a graph pair $\cW$ which represents an infinite family of possible principal graphs obtained from $\cW$ by translation and extension.
A \underline{vine} is a graph pair $\cV$ which represents an infinite family of possible principal graphs obtained from $\cV$ by translation only.
\end{defn}

\begin{exs}
\mbox{}
\begin{enumerate}[(1)]
\item
If $\cP_\bullet$ has annular multiplicities $*10$, then $\Gamma_+$ is a translated extension of
$$
\bigraph{gbg1v1p1v1x0p0x1}
\text{ or }
\bigraph{gbg1v1p1v1x0p1x0}.
$$
\item
If $\cP_\bullet$ has annular multiplicities $*11$, then $\Gamma_+$ is a translated extension of
$$
\bigraph{gbg1v1p1v1x0p1x0p0x1}
\text{ or }
\bigraph{gbg1v1p1v1x0p1x0p1x0}.
$$
\end{enumerate}
\end{exs}

Suppose $\cP_\bullet$ is $n-1$ supertransitive with multiplicity $m$. Then there are $m$ new low weight rotational eigenvectors $A_1,\dots, A_m\in \cP_{n,+}$, and $\cP_{n,+}=\TL_{n,+}\oplus \spann\{A_1,\dots, A_m\}$.

\begin{defn}
The \underline{chiralities} of $\cP_\bullet$ are the rotational eigenvalues $\omega_{A_1},\dots, \omega_{A_m}$ corresponding to the new low weight rotational eigenvectors $A_1,\dots, A_m$ respectively.
\end{defn}

\begin{rems}
\mbox{}
\begin{enumerate}[(1)]
\item
The chirality of a subfactor planar algebra was first defined for some special cases in \cite[Definition 4.2.12]{math/9909027} and subsequently studied in \cite{MR2972458}.
\item
In this article, work with a (completely determined{!}) square root $\sigma_{A}$ of the chirality $\omega_{A}$, and by a slight abuse of notation, we also call $\sigma_A$ a chirality of our subfactor planar algebra. 
This abuse of notation was also used in \cite{1208.3637}.
\end{enumerate}
\end{rems}

\subsection{Liu's relation}

In this subsection, we derive a strong quadratic relation due to Liu, which is a clever variant of Wenzl's relation. 
Let $\cP_\bullet$ be a subfactor planar algebra with principal graphs $(\Gamma_+,\Gamma_-)$.
We begin by stating Wenzl's relation.
We will then state a generalization from which Liu's relation quickly follows. 
\begin{lem}[Wenzl's relation \cite{MR873400}]\label{lem:Wenzl}
Let $\jw{k}\in \TL_{k,\pm}$ be the $k$-th Jones-Wenzl projection.
Then 
$$
\begin{tikzpicture}[baseline = -.1cm]
	\clip (-.8,-.7)--(-.8,.7)--(.8,.7)--(.8,-.7);
	\draw (0,.8)--(0,-1.8);
	\node at (-.4,.6) {{\scriptsize{$k+1$}}};
	\node at (-.4,-.6) {{\scriptsize{$k+1$}}};
	\filldraw[unshaded,thick] (-.6,-.4) rectangle (.6,.4);
	\node at (0,0) {$\jw{k+1}$};
\end{tikzpicture}
=
\begin{tikzpicture}[baseline = -.1cm]
	\clip (-.8,-.8)--(-.8,.8)--(.7,.8)--(.7,-.8);
	\draw (0,.8)--(0,-1.8);
	\node at (-.2,.6) {{\scriptsize{$k$}}};
	\node at (-.2,-.6) {{\scriptsize{$k$}}};
	\filldraw[unshaded,thick] (-.4,.4)--(.4,.4)--(.4,-.4)--(-.4,-.4)--(-.4,.4);
	\node at (0,0) {$\jw{k}$};
	\draw (.6,.8)--(.6,-.8);
\end{tikzpicture}
-
\D\frac{[k]}{[k+1]}
\begin{tikzpicture}[baseline = -.7cm]
	\clip (-.8,-1.9)--(-.8,.9)--(.8,.9)--(.8,-1.9);
	\draw (0,.9)--(0,-1.9);
	\node at (-.2,.8) {{\scriptsize{$k$}}};
	\node at (-.2,-1.8) {{\scriptsize{$k$}}};
	\node at (-.4,-.5) {{\scriptsize{$k-1$}}};
	\filldraw[unshaded,thick] (-.4,.6)--(.4,.6)--(.4,-.2)--(-.4,-.2)--(-.4,.6);
	\node at (0,.2) {$\jw{k}$};
	\filldraw[unshaded,thick] (-.4,-.8)--(.4,-.8)--(.4,-1.6)--(-.4,-1.6)--(-.4,-.8);
	\node at (0,-1.2) {$\jw{k}$};
	\draw (.2,-.2) arc (180:360:.2cm) -- (.6,1);
	\draw (.2,-.8) arc (180:0:.2cm) -- (.6,-1.8);
\end{tikzpicture}\,.
$$
\end{lem}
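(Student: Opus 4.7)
The plan is to verify that the right-hand side, denoted $X$, satisfies the axiomatic characterization of the Jones-Wenzl projection $\jw{k+1}$: it is the unique element of $\TL_{k+1,\pm}$ whose coefficient on the identity diagram equals $1$ and which is annihilated on the left by each Temperley-Lieb generator $e_1,\ldots,e_k$. The argument runs by induction on $k$, with base case $\jw{1}=1$ trivial.

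The first two checks are routine. For the identity coefficient of $X$: the summand $\jw{k}\otimes 1$ contributes $1$ by the inductive hypothesis applied to $\jw{k}$, while the second summand contains internal cups and caps and so has identity coefficient $0$. For $i\in\{1,\ldots,k-1\}$: the generator $e_i$ touches only strands absorbed by the top $\jw{k}$ in each summand, and $e_i\jw{k}=0$ by induction, so $e_i X=0$.

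The main obstacle is verifying $e_k X=0$. The essential input is the partial trace identity $(\mathrm{id}_{k-1}\otimes\mathrm{tr})(\jw{k}) = \tfrac{[k+1]}{[k]}\jw{k-1}$, which follows directly from the characterization of $\jw{k-1}$: the left-hand side lies in $\TL_{k-1,\pm}$, is annihilated by $e_1,\ldots,e_{k-2}$ (after pulling these past the trace using $e_i\jw{k}=0$), and has full trace $[k+1]=\tfrac{[k+1]}{[k]}\cdot [k]$, so by uniqueness it must equal $\tfrac{[k+1]}{[k]}\jw{k-1}$. Applying this identity to collapse the cup-cap sandwiched between the two $\jw{k}$ boxes in the second summand, $e_k$ applied to that summand reduces to $\tfrac{[k+1]}{[k]}$ times $e_k(\jw{k}\otimes 1)$. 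The scalar $\tfrac{[k]}{[k+1]}$ in the statement is then precisely what is needed for the two contributions to cancel, and uniqueness of $\jw{k+1}$ forces $X=\jw{k+1}$. The principal technical difficulty is the diagrammatic bookkeeping of this cancellation; once the partial trace reduction is in place, the coefficient is uniquely determined by the requirement $e_k X = 0$.
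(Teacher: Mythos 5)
The paper cites this as Wenzl's theorem without supplying a proof, so there is no in-paper argument to compare against. Your proof, verifying that the right-hand side satisfies the uniqueness characterization of $\jw{k+1}$, is the standard argument and it is essentially correct: the identity-coefficient check and the $e_i$-annihilation checks for $i<k$ are routine, and the partial-trace reduction $e_k(\jw{k}\otimes 1)e_k(\jw{k}\otimes 1)=\tfrac{[k+1]}{[k]}\,e_k(\jw{k}\otimes 1)$, followed by absorption of $\jw{k-1}\otimes 1$ into $\jw{k}$, is exactly the right mechanism for the $e_k$ case.

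One small gap to close: to conclude that $(\mathrm{id}_{k-1}\otimes\mathrm{tr})(\jw{k})$ is a scalar multiple of $\jw{k-1}$, you invoke ``uniqueness,'' but the characterization you actually stated (identity coefficient $1$ together with annihilation) does not apply directly, because the partial trace has identity coefficient $[k+1]/[k]$, not $1$. What you need is the slightly stronger but equally standard fact that the space of elements of $\TL_{k-1}$ annihilated on both sides by $e_1,\dots,e_{k-2}$ is one-dimensional: for such an $x$ one has $\jw{k-1}x\jw{k-1}=x$ since $1-\jw{k-1}$ lies in the two-sided ideal generated by the $e_i$, while $\jw{k-1}\TL_{k-1}\jw{k-1}=\C\jw{k-1}$ forces $x\in\C\jw{k-1}$; the Markov trace then pins down the scalar as you computed. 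With that supplied the argument is complete.
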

Below is a more general version of Wenzl's relation known to experts (e.g., see \cite[Lemma 3.7]{MR2922607}).

\begin{lem}[Generalized Wenzl's relation]\label{lem:GeneralizedWenzl}
Suppose that $P\in\cP_{k,\pm}$ is a projection corresponding to the vertex $[P]$ at depth $k$ of $\Gamma_\pm$.
Suppose that $f\in\cP_{k-1,\pm}$ is a projection corresponding to the vertex $[f]$ at depth $k-1$ of $\Gamma_\pm$.
Suppose $[P]$ and $[f]$ are connected by a single edge, and $[f]$ is the only vertex at depth $k-1$ attached to $[P]$.
Then the unique projection isomorphic to $f$ under the image of $P$ in $\cP_{k+1,\pm}$ is given by
$$
\D\frac{\Tr(f)}{\Tr(P)}
\begin{tikzpicture}[baseline = -.7cm]
	\clip (-.8,-1.9)--(-.8,.9)--(.8,.9)--(.8,-1.9);
	\draw (0,.9)--(0,-1.9);
	\node at (-.2,.8) {{\scriptsize{$k$}}};
	\node at (-.2,-1.8) {{\scriptsize{$k$}}};
	\node at (-.4,-.5) {{\scriptsize{$k-1$}}};
	\filldraw[unshaded,thick] (-.4,.6)--(.4,.6)--(.4,-.2)--(-.4,-.2)--(-.4,.6);
	\node at (0,.2) {$P$};
	\filldraw[unshaded,thick] (-.4,-.8)--(.4,-.8)--(.4,-1.6)--(-.4,-1.6)--(-.4,-.8);
	\node at (0,-1.2) {$P$};
	\draw (.2,-.2) arc (180:360:.2cm) -- (.6,1);
	\draw (.2,-.8) arc (180:0:.2cm) -- (.6,-1.8);
\end{tikzpicture}.
$$
Hence the  ``new stuff" \cite{MR999799} $P'$ connected to $P$ at depth $k+1$ is given by
\begin{equation*}
\begin{tikzpicture}[baseline = -.1cm]
	\clip (-.8,-.7)--(-.8,.7)--(.8,.7)--(.8,-.7);
	\draw (0,.8)--(0,-1.8);
	\node at (-.4,.6) {{\scriptsize{$k+1$}}};
	\node at (-.4,-.6) {{\scriptsize{$k+1$}}};
	\filldraw[unshaded,thick] (-.5,.4)--(.5,.4)--(.5,-.4)--(-.5,-.4)--(-.5,.4);
	\node at (0,0) {$P'$};
\end{tikzpicture}
=
\begin{tikzpicture}[baseline = -.1cm]
	\clip (-.8,-.8)--(-.8,.8)--(.7,.8)--(.7,-.8);
	\draw (0,.8)--(0,-1.8);
	\node at (-.2,.6) {{\scriptsize{$k$}}};
	\node at (-.2,-.6) {{\scriptsize{$k$}}};
	\filldraw[unshaded,thick] (-.4,.4)--(.4,.4)--(.4,-.4)--(-.4,-.4)--(-.4,.4);
	\node at (0,0) {$P$};
	\draw (.6,.8)--(.6,-.8);
\end{tikzpicture}
-
\D\frac{\Tr(f)}{\Tr(P)}
\begin{tikzpicture}[baseline = -.7cm]
	\clip (-.8,-1.9)--(-.8,.9)--(.8,.9)--(.8,-1.9);
	\draw (0,.9)--(0,-1.9);
	\node at (-.2,.8) {{\scriptsize{$k$}}};
	\node at (-.2,-1.8) {{\scriptsize{$k$}}};
	\node at (-.4,-.5) {{\scriptsize{$k-1$}}};
	\filldraw[unshaded,thick] (-.4,.6)--(.4,.6)--(.4,-.2)--(-.4,-.2)--(-.4,.6);
	\node at (0,.2) {$P$};
	\filldraw[unshaded,thick] (-.4,-.8)--(.4,-.8)--(.4,-1.6)--(-.4,-1.6)--(-.4,-.8);
	\node at (0,-1.2) {$P$};
	\draw (.2,-.2) arc (180:360:.2cm) -- (.6,1);
	\draw (.2,-.8) arc (180:0:.2cm) -- (.6,-1.8);
\end{tikzpicture}.
\end{equation*}
\end{lem}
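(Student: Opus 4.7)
The plan is to show that $X:=\frac{\Tr(f)}{\Tr(P)}\,Y$, where $Y$ denotes the two-$P$ diagram on the right of the first displayed equation, is the unique minimal sub-projection of $\tilde P:=P\otimes 1 \in \cP_{k+1,\pm}$ isomorphic to $f$; the ``new stuff'' formula for $P'$ then follows immediately as $P'=\tilde P - X$. Uniqueness is guaranteed by the hypothesis that $[f]$ is the only depth-$(k-1)$ neighbor of $[P]$ and that the edge $[P]$--$[f]$ has multiplicity one, so within $\tilde P$ there is exactly one summand isomorphic to $f$.

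The key preliminary observation is that the partial trace $E(P)\in\cP_{k-1,\pm}$, obtained by capping the rightmost strand of $P$ at top and bottom, equals $\frac{\Tr(P)}{\Tr(f)}\,f$. Indeed $E(P)$ is a positive element supported on the projections at depth $k-1$ connected to $[P]$ in $\Gamma_\pm$, which by hypothesis consists of $[f]$ alone, so $E(P)=c\,f$ for some $c>0$; applying $\Tr$ and using $\Tr\circ E=\Tr$ forces $c=\Tr(P)/\Tr(f)$.

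I would then verify in turn that (a) $X$ is self-adjoint and idempotent, (b) $X\le\tilde P$, and (c) $\Tr(X)=\Tr(f)$. For (a), decompose $Y=U\circ V$ through its middle $(k-1)$-strand waist, so that $Y^2=U\circ(V\circ U)\circ V$; the middle block $V\circ U$ consists of two copies of $P$ with their rightmost strands joined by the right-side arcs, which is exactly the partial-trace configuration and reduces (using $P\cdot P=P$ and the key observation) to $\frac{\Tr(P)}{\Tr(f)}\,f$ inserted on the middle $(k-1)$ strands. Thus $Y^2=\frac{\Tr(P)}{\Tr(f)}(U\circ f\circ V)$. Since $P$ is a sub-projection of the inclusion $f\otimes 1\in\cP_{k,\pm}$ (as $[P]$ is a depth-$k$ neighbor of $[f]$), the inserted $f$ is absorbed by either adjacent $P$, so $U\circ f\circ V=Y$ and hence $X^2=X$; self-adjointness follows from the vertical reflection symmetry of $Y$ together with $P^*=P$. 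For (b), the dominance $X\tilde P=X=\tilde P X$ is a direct diagrammatic check using $P\cdot P=P$. For (c), closing the $(k+1)$ boundary strands of $Y$ reunites its two $P$'s into a single closed copy of $P$ (the $k-1$ direct through-strands plus the paired right arcs supply all $k$ closing connections), so $\Tr(Y)=\Tr(P)$ and $\Tr(X)=\Tr(f)$.

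The main obstacle is the diagrammatic simplification in step (a): one must carefully verify that, after stacking $Y$ on $Y$, the right-side arcs from the two copies assemble into precisely the partial-trace closure on the rightmost strand of the middle pair of $P$'s, and then handle the subsequent absorption of the resulting $f$ into $U$ and $V$. Once these two reductions are in hand, everything else is formal bookkeeping with traces and the idempotency of $P$.
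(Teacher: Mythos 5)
The paper does not actually prove this lemma; it is stated as ``known to experts'' with a citation, so there is no in-paper proof to compare against. Your strategy---verify directly that $X=\frac{\Tr(f)}{\Tr(P)}Y$ is a projection, lies under $\tilde P=P\otimes 1$, and has trace $\Tr(f)$, using the partial trace to compute $Y^2$---is a sensible and essentially workable route. However, there are two real gaps.

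First, the step ``$E(P)=c\,f$ for some $c>0$'' and the parenthetical ``$P$ is a sub-projection of $f\otimes 1$ as $[P]$ is a depth-$k$ neighbor of $[f]$'' are both under-justified, and the second is false as stated: an edge between $[P]$ and $[f]$ does not by itself make a given minimal projection $P$ lie under a given minimal projection $f$. What you actually need is the full strength of the hypothesis: since $[f]$ is the \emph{only} depth-$(k-1)$ vertex adjacent to $[P]$ and the edge is \emph{single}, the Bratteli count gives $\dim([P]\text{-block of }\cP_{k})=\dim([f]\text{-block of }\cP_{k-1})$, so the inclusion $\cP_{k-1}\hookrightarrow\cP_k$ restricts to an isomorphism of those two matrix blocks. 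Hence there is a \emph{unique} minimal projection $f_P$ in the $[f]$-block with $P\le f_P$ in $\cP_k$, and one checks (using $\Tr\circ E=\Tr$ on the matching blocks) that $E(P)=\frac{\Tr(P)}{\Tr(f)}f_P$. The absorption $U\circ f_P\circ V=Y$ is then correct; since $\Tr(f_P)=\Tr(f)$ and the lemma's conclusion depends on $f$ only through $\Tr(f)$, you may as well take $f=f_P$ from the start, but this should be said.

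Second, establishing (a) idempotence and self-adjointness, (b) $X\le\tilde P$, and (c) $\Tr(X)=\Tr(f)$ is not by itself enough to conclude that $X$ is \emph{the} subprojection of $\tilde P$ equivalent to $f$: a priori $X$ could be a projection of the same trace sitting entirely inside the depth-$(k+1)$ part of $\tilde P$. One also needs to see that $X$ lives in the $[f]$-block of $\cP_{k+1}$. The quickest route here is to observe that $Y=[2]\,\tilde P\, e_k\,\tilde P$ where $e_k\in\cP_{k+1}$ is the Jones projection, so $Y$ lies in the two-sided ideal $\cP_{k+1}e_k\cP_{k+1}$, which is exactly the ``old stuff'' (depth $\le k-1$) of $\cP_{k+1}$; intersected with $\tilde P$, by hypothesis that is precisely the unique $[f]$-block summand. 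Alternatively, once you know $E(P)=\frac{\Tr(P)}{\Tr(f)}f_P$ and $P\le f_P$, the computation $XaX=\tau(a)X$ for all $a\in\cP_{k+1}$ (via $f_P(V a U)f_P\in\C f_P$ and absorption $Uf_P=U$, $f_PV=V$) shows $X$ is a \emph{minimal} projection, and then the same ideal argument pins down its block. With these two points repaired the argument is complete.
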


We now prove Liu's relation from this generalization of Wenzl's relation.
Suppose that $\cP_\bullet$ is $(n-1)$ supertransitive for some $n\geq 2$, such that $\Gamma_+$ is simply laced when truncated to depth $n$, i.e., $\cP_{n,+}\ominus \TL_{n,+}$ is abelian.

\begin{lem}[Liu's relation]\label{lem:Liu}
Let $P$ be a projection at depth $n$ of $\Gamma_+$, and let $P'$ be as in Lemma \ref{lem:GeneralizedWenzl}.
Then
\begin{align*}
\cF(\overline{P})^*\cF(\overline{P})
=
\begin{tikzpicture}[baseline=-.1cm]
	\draw (0,-1.4)--(0,1.4);
	\node at (0,1.6) {\scriptsize{$n-1$}};
	\node at (0,-1.6) {\scriptsize{$n-1$}};
	\draw (.2,1.1) arc (180:0:.2cm) -- (.6,-1.1) arc (0:-180:.2cm);
	\draw (-.2,.3) arc (0:-180:.2cm) -- (-.6,1.6);
	\draw (-.2,-.3) arc (0:180:.2cm) -- (-.6,-1.6);
	\nbox{unshaded}{(0,-.7)}{0}{0}{\overline{P}}
	\nbox{unshaded}{(0,.7)}{0}{0}{\overline{P}}
\end{tikzpicture}
&=
\frac{\Tr(P)^2}{[n]^2}
\left(
\begin{tikzpicture}[baseline=-.1cm]
	\draw (0,-.8)--(0,.8);
	\node at (-.2,.6) {\scriptsize{$n$}};
	\node at (-.2,-.6) {\scriptsize{$n$}};
	\nbox{unshaded}{(0,0)}{0}{0}{\jw{n}}
\end{tikzpicture}
+
\frac{[n-1]}{[n]}
\begin{tikzpicture}[baseline=-.1cm]
	\draw (.2,-1.4)--(.2,1.4);
	\node at (.2,1.6) {\scriptsize{$n-1$}};
	\node at (.2,-1.6) {\scriptsize{$n-1$}};
	\draw (-.2,.3) arc (0:-180:.2cm) -- (-.6,1.6);
	\draw (-.2,-.3) arc (0:180:.2cm) -- (-.6,-1.6);
	\nbox{unshaded}{(.2,-.7)}{.2}{.2}{\jw{n-1}}
	\nbox{unshaded}{(.2,.7)}{.2}{.2}{\jw{n-1}}
\end{tikzpicture}
\right)
-
\frac{\Tr(P)}{[n]}\,
\begin{tikzpicture}[baseline=-.1cm]
	\draw (0,-.8)--(0,.8);
	\node at (0,1) {\scriptsize{$n$}};
	\node at (0,-1) {\scriptsize{$n$}};
	\draw (.2,.4) arc (180:0:.2cm) -- (.6,-.4) arc (0:-180:.2cm);
	\nbox{unshaded}{(0,0)}{0}{0}{\overline{P'}}
\end{tikzpicture}
\,.
\end{align*}
\end{lem}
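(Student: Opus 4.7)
The plan is to derive Liu's relation from the generalized Wenzl relation (Lemma \ref{lem:GeneralizedWenzl}) via a diagrammatic substitution. By the $(n-1)$-supertransitivity hypothesis, $\jw{n-1}$ is the unique projection at depth $n-1$ of $\Gamma_\pm$ and therefore the unique parent of $\overline{P}$. With $\Tr(\jw{n-1})=[n]$ and $\Tr(\overline{P})=\Tr(P)$, Lemma \ref{lem:GeneralizedWenzl} supplies the orthogonal decomposition
$$\overline{P}\otimes| = \overline{P'} + M \qquad\text{in }\cP_{n+1,\pm},$$
where $M:=\frac{[n]}{\Tr(P)}L(\overline{P})$ is the projection onto the $\jw{n-1}$-isotypic component of $\overline{P}\otimes|$, and $L(\overline{P})$ is the loop diagram formed by two copies of $\overline{P}$ connected through the ``old'' $\jw{n-1}$.

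The next step is to recognize the LHS $\cF(\overline{P})^*\cF(\overline{P})$ as the image of a specific bilinear planar tangle $\Phi\colon\cP_{n+1,\pm}\times\cP_{n+1,\pm}\to\cP_{n,\pm}$ applied to two copies of $\overline{P}\otimes|$. Intuitively, $\Phi$ stacks two $(n+1)$-boxes, matches their $n$ leftmost strands in the usual way (accounting for the $n-1$ central connections and the left external cap/cup visible in the LHS picture), and reroutes the rightmost $|$-strand of each copy via an outer arc so that the two $|$-strands connect the boxes externally rather than internally. I would verify this identification by directly comparing the diagram of $\cF(\overline{P})^*\cF(\overline{P})$ with that of $\Phi(\overline{P}\otimes|,\overline{P}\otimes|)$.

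Substituting $\overline{P}\otimes|=\overline{P'}+M$ into both slots of $\Phi$ expands the LHS into four terms. The two cross terms $\Phi(\overline{P'},M)$ and $\Phi(M,\overline{P'})$ vanish because the central-strand composition in $\Phi$ reduces them to a multiple of $\overline{P'}\cdot M=0$, using orthogonality of these projections in $\overline{P}\otimes|$. The Temperley-Lieb term $\Phi(M,M)$ simplifies using $M^2=M$ together with Wenzl's relation (Lemma \ref{lem:Wenzl}) and the rewriting $\jw{n-1}\otimes|=\jw{n}+\frac{[n-1]}{[n]}(\jw{n-1}\text{-loop})$, producing $\frac{\Tr(P)^2}{[n]^2}\jw{n}+\frac{\Tr(P)^2[n-1]}{[n]^3}(\jw{n-1}\text{-loop})$, where the powers of $\frac{[n]}{\Tr(P)}$ hidden in $M$ account for the stated coefficients. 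Finally $\Phi(\overline{P'},\overline{P'})$ yields the $\overline{P'}$-partial-closure term with coefficient $-\frac{\Tr(P)}{[n]}$; the minus sign and scalar arise from the Wenzl-type subtraction $\overline{P'}=\overline{P}\otimes|-M$ inherited from Step~1. The main obstacle is the explicit identification of the tangle $\Phi$ and the verification that the cross terms vanish: the outer-arc rerouting of the rightmost strand makes it non-obvious that $\Phi$ still ``composes'' $\overline{P'}$ and $M$ in a way that respects their orthogonality, and this requires careful planar-isotopy arguments. Once $\Phi$ is pinned down, the coefficient bookkeeping is routine.
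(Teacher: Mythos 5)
Your proposal reaches for the right ingredients (the generalized Wenzl decomposition $\overline{P}\otimes| = \overline{P'}+M$ and a second application of Wenzl's relation to $\jw{n-1}$), but the organizing framework you propose---a bilinear tangle $\Phi$ with $\Phi(\overline{P}\otimes|,\overline{P}\otimes|)=\cF(\overline{P})^*\cF(\overline{P})$ and an expansion into four terms---is genuinely different from the paper's argument, and I don't believe it can be made to work as described. The paper's proof is \emph{linear}, not bilinear: it takes the generalized Wenzl identity, rearranged as $L(\overline{P}) = \frac{\Tr(P)}{[n]}\bigl(\overline{P}\otimes| - \overline{P'}\bigr)$ where $L(\overline{P})$ is the two-$\overline{P}$ loop diagram, and applies a \emph{single} linear tangle (a cap on the right) to both sides. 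Liu's observation is precisely that the cap of $L(\overline{P})$, after a planar isotopy, \emph{is} $\cF(\overline{P})^*\cF(\overline{P})$. After that, the cap of $\overline{P}\otimes|$ reduces to a multiple of $|\otimes\jw{n-1}$, and one more application of Wenzl gives the $\jw{n}$ and $\jw{n-1}$-loop terms. No decomposition is plugged into two slots, so no cross terms ever arise.

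The gap in your approach is concrete, and on two counts. First, the cross-term vanishing does not follow from $\overline{P'}\cdot M=0$: the purported $\Phi$ connects only $n-1$ strands between its two inputs (as forced by the LHS picture), so $\Phi(\overline{P'},M)$ is \emph{not} a scalar times the product $\overline{P'}M$ and there is no orthogonality to invoke. Second, the $\overline{P'}$-closure term in Liu's relation carries a \emph{negative} coefficient $-\frac{\Tr(P)}{[n]}$, whereas in your expansion this term is supposed to come from $\Phi(\overline{P'},\overline{P'})$, a diagrammatic expression with all-positive coefficients; a planar tangle cannot manufacture a minus sign, and since the cross terms (which you claim vanish) cannot supply it either, the signs cannot close. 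You flagged the $\Phi$-identification yourself as the ``main obstacle,'' and indeed that is where the argument breaks. Replacing the bilinear expansion with the paper's single-tangle cap of the \emph{linear} Wenzl identity resolves both problems at once and gives the stated formula directly.
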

\begin{proof}
Start with $P$, apply the generalization of Wenzl's relation, rotate by 180$^\circ$, and then apply a cap on the right to see that
\begin{align*}
\begin{tikzpicture}[baseline=-.1cm]
	\draw (0,-1.4)--(0,1.4);
	\node at (0,1.6) {\scriptsize{$n-1$}};
	\node at (0,-1.6) {\scriptsize{$n-1$}};
	\draw (.2,1.1) arc (180:0:.2cm) -- (.6,-1.1) arc (0:-180:.2cm);
	\draw (-.2,.3) arc (0:-180:.2cm) -- (-.6,1.6);
	\draw (-.2,-.3) arc (0:180:.2cm) -- (-.6,-1.6);
	\nbox{unshaded}{(0,-.7)}{0}{0}{\overline{P}}
	\nbox{unshaded}{(0,.7)}{0}{0}{\overline{P}}
\end{tikzpicture}
&=
\frac{\Tr(P)}{[n]}
\left(
\begin{tikzpicture}[baseline=-.1cm]
	\draw (0,-.8)--(0,.8);
	\node at (0,1) {\scriptsize{$n-1$}};
	\node at (0,-1) {\scriptsize{$n-1$}};
	\draw (.2,.4) arc (180:0:.2cm) -- (.6,-.4) arc (0:-180:.2cm);
	\draw (-.7,-.8)--(-.7,.8);
	\nbox{unshaded}{(0,0)}{0}{0}{\overline{P}}
\end{tikzpicture}
-
\begin{tikzpicture}[baseline=-.1cm]
	\draw (0,-.8)--(0,.8);
	\node at (0,1) {\scriptsize{$n$}};
	\node at (0,-1) {\scriptsize{$n$}};
	\draw (.2,.4) arc (180:0:.2cm) -- (.6,-.4) arc (0:-180:.2cm);
	\nbox{unshaded}{(0,0)}{0}{0}{\overline{P'}}
\end{tikzpicture}
\right)
.
\end{align*}
Liu's observation is that the left hand side is equal to $\cF(\overline{P})^*\cF(\overline{P})$, where 
$$
\cF=
\begin{tikzpicture}[baseline=-.1cm]
	\draw (0,-.8)--(0,.8);
	\node at (0,1) {\scriptsize{$n-1$}};
	\node at (0,-1) {\scriptsize{$n-1$}};
	\draw (.2,.4) arc (180:0:.2cm) -- (.6,-1);
	\draw (-.2,-.4) arc (0:-180:.2cm) -- (-.6,1);
	\nbox{unshaded}{(0,0)}{0}{0}{}
\end{tikzpicture}
$$ 
is the one click rotation.
The rest is a straightforward calculation, since capping $\overline{P}$ gives a multiple of $\jw{n-1}$ (taking traces gives the multiple).
Note that we use Wenzl's relation again for $\jw{n-1}$ with a strand on the left to get the final formula.
\end{proof}

\begin{rem}
When the truncation $\Gamma_\pm(n+1)$ of $\Gamma_\pm$ to depth $n+1$ is simply laced and acyclic, which ensures that we can determine the rightmost term in Liu's relation (see Remark \ref{rem:Calculate}), we can use Liu's relation to determine the modulus of the entries of the one click rotation at depth $n$.
\end{rem}

\section{Multiplicity 1 - triple points}\label{sec:TriplePoints}

When $\Gamma_+$ is $(n-1)$ supertransitive for some $n\geq 2$ and has multiplicity 1, we can identify the rotational low weight eigenvector. 
We use the following conventions:
\begin{itemize}
\item designate the projections $P,Q$ and $\check{P},\check{Q}$ at depth $n$ of $\Gamma_+$ and $\Gamma_-$ respectively,
\item if $n$ is odd, then choose so that $\overline{P}=\check{P}$ and $\overline{Q}=\check{Q}$,
\item $\D r= \frac{\Tr(Q)}{\Tr(P)}$ is the \underline{branch factor},
\item $A=rP-Q\in \cP_{n,+}$ is a low-weight rotational eigenvector with eigenvalue $\omega_A$,
\item since $P+Q=\jw{n}$, we have $\D P=\frac{\jw{n}+A}{1+r}$ and $\D Q = \frac{r\jw{n}-A}{1+r}$,
\item $A^2=(r-1)A+r\jw{n}$, 
\item the previous 4 lines have analogous formulas with checks, and
\item $\D\cF(A)=\frac{\sqrt{r}}{\sqrt{\check{r}}} \sigma_A  \check{A}$ for a determined $\sigma_A^2=\omega_A$.
\end{itemize}

\begin{note}
By designating $P,\check{P}$, we have designated $A,\check{A}$, which completely determines $\sigma_A$.
\end{note}

\begin{defn}\label{defn:PPrime}
Since $P$ only connects to $\jw{n-1}$ at depth $n-1$, we may define $P'$ as in Lemma \ref{lem:GeneralizedWenzl}. 
Let $\cap_{n+1}(\overline{P'})$ denote $\overline{P'}$ capped off on the right as in Lemma \ref{lem:Liu}:
$$
\cap_{n+1}(\overline{P'})=\,
\begin{tikzpicture}[baseline=-.1cm]
	\draw (0,-.8)--(0,.8);
	\node at (0,1) {\scriptsize{$n$}};
	\node at (0,-1) {\scriptsize{$n$}};
	\draw (.2,.4) arc (180:0:.2cm) -- (.6,-.4) arc (0:-180:.2cm);
	\nbox{unshaded}{(0,0)}{0}{0}{\overline{P'}}
\end{tikzpicture}\,.
$$
Since capping $\cap_{n+1}(\overline{P'})$ on the top or bottom gives zero, we must have that $\cap_{n+1}(\overline{P'})\in \spann\{A_0,\jw{n}\}\subset \cP_{n,\pm}$, where 
$$
A_0=
\begin{cases}
\check{A} &\text{if $n$ is even}\\
A &\text{if $n$ is odd.}
\end{cases}
$$
We denote the coefficient of $A_0$ in $\cap_{n+1}(\overline{P'})$ by
$
\underset{\in \cap_{n+1}(\overline{P'})}{\coeff}\left(A_0\right).
$
\end{defn}

\begin{thm}\label{thm:TriplePoint}
If $n$ is even and $\overline{P}=P$, then
\begin{equation}\label{eqn:EvenObstructionSelfDual}
(\check{r}-1)\frac{r}{\check{r}} 
- 
\frac{(\sigma_A+\sigma_A^{-1})}{[n]}\frac{\sqrt{r}}{\sqrt{\check{r}}}
=
-(1+r) \frac{[n+1]}{[n]} \left(\underset{\in \cap_{n+1}(\overline{P'})}{\coeff}\left(\check{A}\right)\right).
\tag{E}
\end{equation}
If $n$ is even and $\overline{P}=Q$, then $r=1$, and
\begin{equation}\label{eqn:EvenObstructionNotSelfDual}
(\check{r}-1)\frac{1}{\check{r}} 
+
\frac{(\sigma_A+\sigma_A^{-1})}{[n]\sqrt{\check{r}}}
=
-2 \frac{[n+1]}{[n]} \left(\underset{\in \cap_{n+1}(\overline{P'})}{\coeff}\left(\check{A}\right)\right).
\tag{$\overline{\text{E}}$}
\end{equation}
If $n$ is odd, then $r=\check{r}$, and
\begin{equation}\label{eqn:OddObstruction}
(r-1) 
-
\frac{(\sigma_A+\sigma_A^{-1})}{[n]}
=
-(1+r) \frac{[n+1]}{[n]} \left(\underset{\in \cap_{n+1}(\overline{P'})}{\coeff}\left(A\right)\right).
\tag{O}
\end{equation}
\end{thm}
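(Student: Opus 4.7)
The plan is to apply Liu's relation (Lemma \ref{lem:Liu}) to the projection $\overline{P}$ and extract the coefficient of the low-weight vector $A_0$ from both sides; write $c_A := \coeff_{\in \cap_{n+1}(\overline{P'})}(A_0)$ for brevity. On the right-hand side of Liu's relation, Definition \ref{defn:PPrime} gives the decomposition $\cap_{n+1}(\overline{P'}) = c_A A_0 + c_J \jw{n}$. Since $A_0$ is orthogonal to $\TL_{n,\pm}$, both the $\jw{n}$ summand and the Temperley-Lieb correction built from two copies of $\jw{n-1}$ contribute nothing to the $A_0$-component; what remains is $-\frac{\Tr(P)}{[n]} c_A$, and substituting $\Tr(P) = [n+1]/(1+r)$ (from $P+Q = \jw{n}$ together with $\Tr(Q)/\Tr(P) = r$) produces, after clearing denominators, the right-hand sides of (E), $(\overline{\text{E}})$, and (O).

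On the left-hand side, I expand $\overline{P}$ in the basis $\{\jw{n}, A\}$ (or $\{\jw{n}, \check{A}\}$) and compute $\cF(\overline{P})$ using $\cF(\jw{n}) = \jw{n}$ together with the given rotation formula $\cF(A) = \frac{\sqrt{r}}{\sqrt{\check{r}}}\sigma_A \check{A}$ and its dual $\cF(\check{A}) = \frac{\sqrt{\check{r}}}{\sqrt{r}}\sigma_A A$. The adjoint of the one-click rotation satisfies $\cF(X)^* = \cF^{-1}(X^*)$, so for self-adjoint inputs the effect is to replace $\sigma_A$ by $\sigma_A^{-1}$ in $\cF(\overline{P})^*$. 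Multiplying $\cF(\overline{P})^*$ against $\cF(\overline{P})$ and reducing via $A^2 = (r-1)A + r\jw{n}$ (and the analogous dual identity $\check{A}^2 = (\check{r}-1)\check{A} + \check{r}\jw{n}$) expresses the product as an element of $\spann\{\jw{n}, A_0\}$; its $A_0$-coefficient is a linear combination of a $(\sigma_A + \sigma_A^{-1})$-term from the cross terms and an $(\check{r}-1)r/\check{r}$-type term from squaring $A_0$.

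The three cases arise from the three possible expansions of $\overline{P}$: (i) $n$ even with $P$ self-dual uses $\overline{P} = P = \frac{\jw{n}+A}{1+r}$ directly, giving (E); (ii) $n$ even with $\overline{P} = Q$ forces $r = 1$ by trace-preservation under duality (as $\Tr(P) = \Tr(\overline{P}) = \Tr(Q) = r\Tr(P)$), and expanding $\overline{P} = Q = \frac{\jw{n}-A}{2}$ reverses the sign of the $(\sigma_A + \sigma_A^{-1})$-contribution, yielding $(\overline{\text{E}})$; (iii) $n$ odd forces $r = \check{r}$, and $\overline{P} = \check{P} = \frac{\jw{n}+\check{A}}{1+r}$ lives in $\cP_{n,-}$, so after applying $\cF$ the result lands back in $\spann\{\jw{n}, A\}$ with $A_0 = A$, giving (O). The main obstacle is bookkeeping --- correctly tracking the shading of each tangle, the dualities relating $A$ and $\check{A}$, and the adjoint action of $\cF$ on the rotational phase $\sigma_A$ --- but once the conventions are fixed, the three equations fall out cleanly by reading off $A_0$-coefficients from both sides of Liu's relation.
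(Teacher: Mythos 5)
Your overall strategy matches the paper's: apply Liu's relation and read off the coefficient of the low-weight vector on each side, with the right-hand side giving $-\frac{\Tr(P)}{[n]}\,\underset{\in\cap_{n+1}(\overline{P'})}{\coeff}(A_0)$ exactly as you describe. But there is a genuine error on the left-hand side: the identity $\cF(\jw{n})=\jw{n}$ is \emph{false}. The Jones--Wenzl projection is invariant under the two-click rotation $\rho=\cF^2$, but not under the one-click rotation $\cF$: for example $\cF(\jw{2})=e_1-\frac{1}{[2]}\mathrm{id}\neq\jw{2}$. Indeed, $\cF(\jw{n})$ fails to be uncappable at the leftmost position (capping there produces a partial trace of $\jw{n}$, which is a nonzero multiple of $\jw{n-1}$), so it cannot be the Jones--Wenzl of $\cP_{n,-}$.

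This matters quantitatively. In the cross terms $\cF(\jw{n})^*\cF(A)+\cF(A)^*\cF(\jw{n})$, one has $T\check A=\delta_{T,\mathrm{id}}\check A$ for a TL diagram $T$ (any cup in $T$ caps $\check A$), so the relevant scalar is the coefficient of $\mathrm{id}$ in $\cF^{\pm1}(\jw{n})$, equivalently the coefficient in $\jw{n}$ of the single-cup-single-cap diagram $\cF(\mathrm{id})$. That coefficient is $(-1)^{n-1}/[n]$ (cited in the paper). Your assumption $\cF(\jw{n})=\jw{n}$ silently replaces $(-1)^{n-1}/[n]$ by $1$, so the $(\sigma_A+\sigma_A^{-1})$-term in your left-hand side comes out as $\frac{\sqrt r}{\sqrt{\check r}}(\sigma_A+\sigma_A^{-1})$ rather than the correct $\frac{(-1)^{n-1}}{[n]}\frac{\sqrt r}{\sqrt{\check r}}(\sigma_A+\sigma_A^{-1})$ --- off by a factor of $-[n]$ for $n$ even (and $+[n]$ for $n$ odd), so the resulting equations do not reduce to (E), $(\overline{\mathrm E})$, (O). The paper handles this point diagrammatically by expanding $\jw{n}$ into TL diagrams inside the product picture and observing that only the single rotated identity diagram survives, with coefficient $(-1)^{n-1}/[n]$. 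Once you replace ``$\cF(\jw{n})=\jw{n}$'' with ``the coefficient of $\mathrm{id}$ in $\cF^{\pm1}(\jw{n})$ equals $(-1)^{n-1}/[n]$,'' your argument becomes correct and is essentially the same as the paper's, algebraically packaged.
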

\begin{proof}
To prove Equations \eqref{eqn:EvenObstructionSelfDual} and \eqref{eqn:EvenObstructionNotSelfDual}, we find the coefficient of $\check{A}$ in both sides of Liu's relation, noting that $\check{A}$ is orthogonal to $\TL_{n,-}$. 
Suppose $\overline{P}=P$. 
Then since $\D P=\frac{\jw{n}+ A}{1+r}$, the left hand side in Liu's relation is equal to
\begin{align*}
\cF(P)^*\cF(P)
&=
\frac{1}{(1+r)^2}\cF(\jw{n}+ A)^*\cF(\jw{n}+ A)
\\&=
\frac{1}{(1+r)^2} \left(
\frac{r}{\check{r}}\check{A}^2 +
\begin{tikzpicture}[baseline=-.1cm]
	\draw (0,-1.4)--(0,1.4);
	\node at (0,1.6) {\scriptsize{$n-1$}};
	\node at (0,-1.6) {\scriptsize{$n-1$}};
	\draw (.2,1.1) arc (180:0:.2cm) -- (.6,-1.1) arc (0:-180:.2cm);
	\draw (-.2,.3) arc (0:-180:.2cm) -- (-.6,1.6);
	\draw (-.2,-.3) arc (0:180:.2cm) -- (-.6,-1.6);
	\nbox{unshaded}{(0,-.7)}{0}{0}{\jw{n}}
	\nbox{unshaded}{(0,.7)}{0}{0}{A}
\end{tikzpicture}
+
\begin{tikzpicture}[baseline=-.1cm]
	\draw (0,-1.4)--(0,1.4);
	\node at (0,1.6) {\scriptsize{$n-1$}};
	\node at (0,-1.6) {\scriptsize{$n-1$}};
	\draw (.2,1.1) arc (180:0:.2cm) -- (.6,-1.1) arc (0:-180:.2cm);
	\draw (-.2,.3) arc (0:-180:.2cm) -- (-.6,1.6);
	\draw (-.2,-.3) arc (0:180:.2cm) -- (-.6,-1.6);
	\nbox{unshaded}{(0,-.7)}{0}{0}{A}
	\nbox{unshaded}{(0,.7)}{0}{0}{\jw{n}}
\end{tikzpicture}
+
\begin{tikzpicture}[baseline=-.1cm]
	\draw (0,-1.4)--(0,1.4);
	\node at (0,1.6) {\scriptsize{$n-1$}};
	\node at (0,-1.6) {\scriptsize{$n-1$}};
	\draw (.2,1.1) arc (180:0:.2cm) -- (.6,-1.1) arc (0:-180:.2cm);
	\draw (-.2,.3) arc (0:-180:.2cm) -- (-.6,1.6);
	\draw (-.2,-.3) arc (0:180:.2cm) -- (-.6,-1.6);
	\nbox{unshaded}{(0,-.7)}{0}{0}{\jw{n}}
	\nbox{unshaded}{(0,.7)}{0}{0}{\jw{n}}
\end{tikzpicture}
\right).
\end{align*}
The fourth diagram is in $\TL_{n,-}$ and does not contribute to the coefficient of $\check{A}$.
Note that the only diagram in the $\jw{n}$ which contributes to the second, respectively third, summand is 
$
\begin{tikzpicture}[baseline = -.1cm,xscale=-1]
	\draw[thick] (-.6,-.4)--(-.6,.4)--(.75,.4)--(.75,-.4)--(-.6,-.4);
	\draw (-.4,.4)--(.2,-.4);
	\draw (-.4,-.4) arc (180:0:.2cm);
	\draw (-.2,.4) arc (-180:0:.2cm);
	\node at (.35,0) {{\scriptsize{$n-2$}}};
\end{tikzpicture}
$\,,
respectively
$
\begin{tikzpicture}[baseline = -.1cm]
	\draw[thick] (-.6,-.4)--(-.6,.4)--(.75,.4)--(.75,-.4)--(-.6,-.4);
	\draw (-.4,.4)--(.2,-.4);
	\draw (-.4,-.4) arc (180:0:.2cm);
	\draw (-.2,.4) arc (-180:0:.2cm);
	\node at (.35,0) {{\scriptsize{$n-2$}}};
\end{tikzpicture}
$, both of which have coefficient $(-1)^{n-1}/[n]$ \cite{morrison,MR2375712}. Hence the coefficient of $\check{A}$ is
$$
\frac{1}{(1+r)^2} \left((\check{r}-1)\frac{r}{\check{r}}+(\sigma_A+\sigma_A^{-1})\frac{(-1)^{n-1}}{[n]}\frac{\sqrt{r}}{\sqrt{\check{r}}} \right).
$$
Since the first two diagrams on the right hand side of Liu's relation are in $\TL_{n,-}$, Equation \eqref{eqn:EvenObstructionSelfDual} follows immediately using $n$ is even and $\D \Tr(P)=\frac{[n+1]}{1+r}$.

When $\overline{P}=Q$, a similar argument shows the coefficient of $\check{A}$ in $\cF(Q)^*\cF(Q)$ is given by
$$
\frac{1}{(1+r)^2} \left((\check{r}-1)\frac{r}{\check{r}}-r(\sigma_A+\sigma_A^{-1})\frac{(-1)^{n-1}}{[n]}\frac{\sqrt{r}}{\sqrt{\check{r}}} \right).
$$ 
Now use $r=1$ and $n$ is even, and substitute for $\Tr(P)$.

For Equation \eqref{eqn:OddObstruction}, we find the coefficient of $A$ in $\cF(\overline{P})^*\cF(\overline{P})=\cF(\check{P})^*\cF(\check{P})$. 
A similar argument as before shows this coefficient is given by
$$
\frac{1}{(1+\check{r})^2} \left((r-1)\frac{\check{r}}{r}+(\sigma_A+\sigma_A^{-1})\frac{(-1)^{n-1}}{[n]}\frac{\sqrt{\check{r}}}{\sqrt{r}} \right).
$$
Now use $r=\check{r}$ and $n$ is odd, and substitute for $\Tr(P)$.
\end{proof}

\begin{rem}\label{rem:Calculate}
To get useful formulas from Theorem \ref{thm:TriplePoint}, we need to be able to determine $\cap_{n+1}(\overline{P'})$. 
This is always possible under the following condition:
\begin{itemize}
\item
For each $R$ at depth $n+1$ of $\Gamma_+$, let $m_R$ be the number of edges connecting $P$ and $R$.
For each $R$ with $m_R>0$, suppose there is a unique vertex $E(\overline{R})$ at depth $n$ connected to $\overline{R}$. In this case,
$$
\underset{\in \cap_{n+1}(\overline{P'})}{\coeff}\left(A_0\right)=\sum_R \frac{m_R\Tr(R)}{\Tr(E(\overline{R}))} \underset{\in E(\overline{R})}{\coeff}\left(A_0\right).
$$
\end{itemize}
This very general condition is satisfied by most principal graphs of small index, after choosing the appropriate $P$.
\end{rem}


\subsection{Triple point obstructions}

We now recover all known triple point obstructions from Theorem \ref{thm:TriplePoint}, and we give a new annular multiplicities $*11$ obstruction.

We start with Ocneanu's triple point obstruction for an initial triple point.
The hypotheses of Theorem \ref{thm:Ocneanu} are equivalent to those for Ocneanu's triple point obstruction \cite{MR1317352} (see \cite[Theorems 3.5 and 3.6]{MR2902285} for more details). Our proposition actually determines the chirality, which is stronger than Ocneanu's result, but only works for an initial triple point, which is weaker than Ocneanu's result.

\begin{thm}\label{thm:Ocneanu}
Suppose that
\begin{itemize}
\item 
if $n$ is even, then $r=\check{r}$, and for every vertex $R$ connected to  $P$, $\overline{R}$ only connects to $\check{P}$ if $\overline{P}=P$ or $\check{Q}$ if $\overline{P}=Q$, or
\item
if $n$ is odd, then for every vertex $R$ connected to  $P$, $\overline{R}$ only connects to $P$.
\end{itemize}
Then $\sigma_A+\sigma_A^{-1}=[n+2]-[n]=q^{n+1}+q^{-n-1}$.
\end{thm}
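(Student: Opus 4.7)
The plan is to reduce Theorem \ref{thm:Ocneanu} directly to the appropriate equation from Theorem \ref{thm:TriplePoint} by using the stated hypotheses together with Remark \ref{rem:Calculate} to evaluate $\underset{\in \cap_{n+1}(\overline{P'})}{\coeff}(A_0)$ explicitly. In each of the three cases, the hypothesis guarantees that for every vertex $R$ at depth $n+1$ connected to $P$, the dual vertex $\overline{R}$ is connected to exactly one vertex $E(\overline{R})$ at depth $n$, so Remark \ref{rem:Calculate} applies. Moreover, the hypothesis pins down the identity of $E(\overline{R})$: it is $\check{P}$ in the even self-dual case, $\check{Q}$ in the even non-self-dual case, and $P$ in the odd case.

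First I compute $\underset{\in E(\overline{R})}{\coeff}(A_0)$ using the formulas $\check{P} = (\jw{n} + \check{A})/(1+\check{r})$, $\check{Q} = (\check{r}\jw{n} - \check{A})/(1+\check{r})$, and $P = (\jw{n} + A)/(1+r)$ listed just before Definition \ref{defn:PPrime}: this gives $\pm 1/(1+\check{r})$ or $1/(1+r)$. Next, since $\jw{n-1}$ is the unique vertex at depth $n-1$ connected to $P$ (with a single edge), the standard trace identity for neighbors in the principal graph gives
$$
\sum_R m_R \Tr(R) = [2]\Tr(P) - \Tr(\jw{n-1}) = [2]\Tr(P) - [n].
$$
Combined with $\Tr(P) = [n+1]/(1+r)$, summing the formula of Remark \ref{rem:Calculate} over $R$ produces a closed-form expression for $\underset{\in \cap_{n+1}(\overline{P'})}{\coeff}(A_0)$ in terms of the quantum integers and of $r,\check{r}$ alone.

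Plugging this into the appropriate equation \eqref{eqn:EvenObstructionSelfDual}, \eqref{eqn:EvenObstructionNotSelfDual}, or \eqref{eqn:OddObstruction} of Theorem \ref{thm:TriplePoint} and simplifying produces the result. In the even self-dual case the condition $r=\check{r}$ cancels the radical ratio $\sqrt{r/\check{r}}$; in the even non-self-dual case $r=\check{r}=1$ (since $\overline{P}=Q$ already forces $r=1$, and the hypothesis supplies $r=\check{r}$) makes the prefactors trivial; in the odd case $r=\check{r}$ is automatic from $\overline{P}=\check{P}$. In each case the algebra collapses via the single identity $[2][n+1] = [n+2]+[n]$ to
$$
\sigma_A + \sigma_A^{-1} = [n+2]-[n],
$$
and the final identity $[n+2]-[n] = q^{n+1}+q^{-n-1}$ follows directly from $[k]=(q^k-q^{-k})/(q-q^{-1})$ using $q^2-1=q(q-q^{-1})$ and $1-q^{-2}=q^{-1}(q-q^{-1})$.

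The main obstacle is bookkeeping rather than mathematical depth: three cases with different signs, scale factors, and choices of $A$ versus $\check{A}$ at depth $n$. Since the identical mechanism produces the identical answer in all three cases, the parallel calculations also serve as a consistency check on the sign conventions of Theorem \ref{thm:TriplePoint}; no new planar algebra input beyond Liu's relation (already invoked to prove Theorem \ref{thm:TriplePoint}) is required.
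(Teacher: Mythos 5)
Your proposal is correct and takes essentially the same approach as the paper: both reduce to the appropriate equation of Theorem \ref{thm:TriplePoint} and evaluate $\underset{\in \cap_{n+1}(\overline{P'})}{\coeff}(A_0)$ by noting that the hypothesis makes $E(\overline{R})$ a single fixed vertex ($\check{P}$, $\check{Q}$, or $P$, respectively), after which $\Tr(P')=[2]\Tr(P)-[n]$ together with $[2][n+1]=[n]+[n+2]$ does all the simplification. Your appeal to Remark \ref{rem:Calculate} plus the trace identity $\sum_R m_R\Tr(R)=[2]\Tr(P)-[n]=\Tr(P')$ is just a repackaging of the paper's direct observation that $\cap_{n+1}(\overline{P'})=\frac{\Tr(P')}{\Tr(P)}\,E(\overline{R})$.
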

\begin{proof}
Both conditions for $n$ even and $n$ odd imply that $r=\check{r}$ and
$$
\Tr(P')=[2]\Tr(P)-[n]=[2]\left(\frac{[n+1]}{1+r}\right)-[n]=\frac{[n+2]-r[n]}{1+r}.
$$
Under these hypotheses, all three equations given in Theorem \ref{thm:TriplePoint} specialize to give $\sigma_A+\sigma_A^{-1}=[n+2]-[n]$.
We include the proof below for convenience.

For $n$ even and $\overline{P}=P$, $\D\cap_{n+1}(\overline{P'})=\frac{\Tr(P')}{\Tr(P)}\check{P}$, and Equation \eqref{eqn:EvenObstructionSelfDual} rearranges and simplifies to give
\begin{align*}
\sigma_A+\sigma_A^{-1}
&=
[n]\left((r-1)+(1+r)\frac{[n+1]}{[n]} \left(\underset{\in \cap_{n+1}(\overline{P'})}{\coeff}\left(\check{A}\right)\right)\right)
\\&=
(r-1)[n]+(1+r)[n+1]\left(\frac{\Tr(P')}{\Tr(P)}\cdot \frac{1}{1+r}\right)
\\&=
(r-1)[n]+(1+r)\left(\frac{[n+2]-r[n]}{1+r}\right)
\\&=[n+2]-[n].
\end{align*}

For $n$ even and $\overline{P}=Q$, $\Tr(P)=\Tr(Q)$, so $r=\check{r}=1$, $\D\cap_{n+1}(\overline{P'})=\frac{\Tr(P')}{\Tr(P)}\check{Q}$, and Equation \eqref{eqn:EvenObstructionNotSelfDual} rearranges and simplifies to give
\begin{align*}
\sigma_A+\sigma_A^{-1}
&=
-2[n+1]\left(\underset{\in \cap_{n+1}(\overline{P'})}{\coeff}\left(\check{A}\right)\right)
=
-2[n+1]\left(\frac{\Tr(P')}{\Tr(P)}\cdot \frac{-1}{1+r}\right)
=[n+2]-[n].
\end{align*}

Finally, For $n$ odd, $\D\cap_{n+1}(\overline{P'})=\frac{\Tr(P')}{\Tr(P)}P$, and Equation \eqref{eqn:OddObstruction} rearranges and simplifies exactly as Equation \eqref{eqn:EvenObstructionSelfDual} did for the case $n$ even and $\overline{P}=P$.
\end{proof}

\begin{cor}[Ocneanu \cite{MR1317352}]\label{cor:Ocneanu}
Under the conditions of Theorem \ref{thm:Ocneanu}, $[2]\leq 2$.
\end{cor}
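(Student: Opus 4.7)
The plan is to combine Theorem \ref{thm:Ocneanu} with the key fact that the chirality lies on the unit circle. Since the low weight rotational eigenvector $A$ at depth $n$ has rotational eigenvalue $\omega_A = \sigma_A^2$ equal to an $n$-th root of unity (as noted in Subsection \ref{sec:Annular} following \cite{MR1929335}), its chosen square root $\sigma_A$ also has modulus $1$. Hence $\sigma_A^{-1} = \overline{\sigma_A}$, so $\sigma_A + \sigma_A^{-1} = 2\,\mathrm{Re}(\sigma_A)$ is a real number in the interval $[-2, 2]$.

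By Theorem \ref{thm:Ocneanu}, under the given hypotheses $\sigma_A + \sigma_A^{-1} = [n+2] - [n] = q^{n+1} + q^{-n-1}$. Combining this with the previous bound gives the inequality $|q^{n+1} + q^{-n-1}| \leq 2$, which is the only nontrivial content of the argument.

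Finally, I would argue $[2] = q + q^{-1} \leq 2$ by case analysis on the allowed values of $q$. If $q = \exp(i\pi/j)$ for some $j \geq 3$, then $[2] = 2\cos(\pi/j) < 2$ and we are done. Otherwise $q \in [1, \infty)$; suppose for contradiction $q > 1$. Since $n \geq 2$, the function $x \mapsto x^{n+1} + x^{-n-1}$ is strictly increasing on $[1,\infty)$ and takes the value $2$ at $x = 1$, so $q^{n+1} + q^{-n-1} > 2$, contradicting the bound above. Hence $q = 1$ (yielding $[2] = 2$), and in all cases $[2] \leq 2$.

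The proof is essentially immediate given Theorem \ref{thm:Ocneanu}; the only potential wrinkle is ensuring the modulus bound is tight enough to be informative, which works because the exponent $n+1 \geq 3$ forces the real quantity $q^{n+1} + q^{-n-1}$ to overshoot $2$ the moment $q$ strays above $1$. No further planar-algebra input is needed.
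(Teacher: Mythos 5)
Your proof is correct and follows exactly the paper's approach: observe that $\sigma_A$ lies on the unit circle so $\sigma_A + \sigma_A^{-1} \in [-2,2]$, equate this with $q^{n+1}+q^{-n-1}$ via Theorem \ref{thm:Ocneanu}, and conclude $[2]=q+q^{-1}\leq 2$. The paper compresses this into a single line, while you spell out the case analysis on the allowed values of $q$ and the monotonicity argument; the substance is the same.
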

\begin{proof}
$q^{n+1}+q^{-n-1}=[n+2]-[n]=\sigma_A+\sigma_A^{-1}\in[-2,2]$ only if $q+q^{-1}=[2]\leq 2$.
\end{proof}

\begin{rem}\label{rem:IndexAtMost4}
After suitably labeling the projections at depth $n$, every irreducible subfactor with index at most 4 satisfies the conditions of Theorem \ref{thm:Ocneanu}.
A table of the possible chiralities for these subfactors is given in Example \ref{exs:IndexAtMost4}, as is a proof that $E_7$ and $D_{2k-1}$ ($3\leq k<\I$) are not principal graphs of subfactors. Interestingly, all possible chiralities actually occur.
\end{rem}

We now recover \cite[Theorem 3]{1207.5090} which generalizes \cite[Theorem 5.1.11]{MR2972458}. 
\begin{cor}\label{cor:SinglyValent}
Suppose $[2]>2$,  $(\Gamma_+,\Gamma_-)$ has multiplicity 1, and $P$ is singly valent. Then $n$ is even, $r=[n+2]/[n]$, and
$$
\check{r}+\frac{1}{\check{r}}=2+\frac{\omega_A+\omega_A^{-1}+2}{[n][n+2]}.
$$
Moreover, if $\omega_A$ is a primitive $k$-th root of unity, then $2k\mid n$.
\end{cor}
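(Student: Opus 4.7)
The plan is to exploit the singly valent hypothesis to reduce every right-hand side in Theorem \ref{thm:TriplePoint} to zero, then use parity and self-duality to identify the correct case and solve for $\check{r}$.

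First, observe that $P$ singly valent means its only neighbour in $\Gamma_+$ is $\jw{n-1}$, so by the Perron-Frobenius identity for the principal graph, $[2]\Tr(P) = \Tr(\jw{n-1}) = [n]$, giving $\Tr(P) = [n]/[2]$. Then $\Tr(Q) = [n+1] - [n]/[2]$, and using $[2][n+1] = [n] + [n+2]$ we obtain $r = [n+2]/[n]$; since $[2] > 2$ forces $q > 1$, we have $r > 1$. Equivalently, the ``new stuff'' $P'$ of Lemma \ref{lem:GeneralizedWenzl} is zero, so $\cap_{n+1}(\overline{P'}) = 0$ and all three right-hand sides in Theorem \ref{thm:TriplePoint} vanish.

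Next I would eliminate two of the three cases. If $n$ were odd, Equation \eqref{eqn:OddObstruction} would give $\sigma_A + \sigma_A^{-1} = (r-1)[n] = [n+2] - [n] = q^{n+1} + q^{-n-1} > 2$, contradicting $|\sigma_A| = 1$. If $n$ were even with $\overline{P} = Q$, then $r = 1$, contradicting $r > 1$. Hence $n$ is even, $\overline{P} = P$, and Equation \eqref{eqn:EvenObstructionSelfDual} is the relevant one. It rearranges to $\sigma_A + \sigma_A^{-1} = (\check{r}-1)[n]\sqrt{r/\check{r}}$. Squaring, substituting $r = [n+2]/[n]$, and using $(\sigma_A + \sigma_A^{-1})^2 = \omega_A + \omega_A^{-1} + 2$ yields $\omega_A + \omega_A^{-1} + 2 = [n][n+2](\check{r}-1)^2/\check{r}$; the identity $(\check{r}-1)^2/\check{r} = \check{r} + 1/\check{r} - 2$ then gives the claimed formula.

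Finally, for the divisibility: $\overline{P} = P$ together with $P + Q = \jw{n}$ forces $\overline{Q} = Q$, hence $\overline{A} = A$. Since duality on an $n$-box is implemented by the $n$-click rotation $\cF^n$, this says $\cF^n(A) = A$. Combined with $\cF^2(A) = \omega_A A$ (the definition of the chirality) and the fact that $n$ is even, we get $\omega_A^{n/2} = 1$; hence if $\omega_A$ is a primitive $k$-th root of unity then $k \mid n/2$, i.e., $2k \mid n$. The main nuisance I anticipate is keeping square-root branches consistent when squaring the chirality relation in the third paragraph; otherwise everything reduces to straightforward algebra once the correct case of Theorem \ref{thm:TriplePoint} is identified.
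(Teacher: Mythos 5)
Your proof is correct and follows the same strategy as the paper: the singly-valent hypothesis forces $P'=0$, so the right-hand side of each equation in Theorem \ref{thm:TriplePoint} vanishes; the $n$ odd case and the $\overline{P}=Q$ case are ruled out (the paper does the first by invoking Theorem \ref{thm:Ocneanu} with vacuous hypotheses rather than citing Equation \eqref{eqn:OddObstruction} directly, but this amounts to the same contradiction $[2]\leq 2$); and Equation \eqref{eqn:EvenObstructionSelfDual} rearranges and squares to the stated formula. The only real divergence is the divisibility step: you use $\overline{P}=P\Rightarrow\overline{A}=A$ together with the identification of duality with $\cF^n$, whereas the paper avoids invoking that identification and instead notes that $\rho^{n/2}$ is a trace-preserving automorphism of $\spann\{P,Q\}$, so $\Tr P\neq\Tr Q$ forces it to fix $P$ and $Q$ (hence $A$) pointwise---both routes give $\omega_A^{n/2}=1$.
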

\begin{proof}
Let $P$ be the singly valent vertex. If $n$ is odd, then the hypotheses of Theorem \ref{thm:Ocneanu} are satisfied, so Corollary \ref{cor:Ocneanu} contradicts $[2]>2$. 
Hence $n$ is even, $\overline{P}=P$ (since $\Tr(P)\neq \Tr(Q)$), and $r=[n+2]/[n]$.
Since $P'=0$, the right hand side in Equation \eqref{eqn:EvenObstructionSelfDual} is zero, and the equation can be rearranged to give
$$
\sigma_A+\sigma_A^{-1}=\sqrt{[n+2][n]}\left( \sqrt{\check{r}} - \frac{1}{\sqrt{\check{r}}}\right).
$$
Squaring both sides and rearranging yields the formula.

For the last statement, consider the 2-click rotation $\rho$ on $\cP_{n,+}$. Since $n=2m$ for some $m$, we have $\rho^m$ is a trace-preserving map on $\spann\{P,Q\}$. Since $\Tr(P)\neq \Tr(Q)$, $\rho^m$ must be the identity, so $\rho^m$ fixes $S$. In particular, $\omega_A^m=1$, so $k\mid m$, and thus $2k\mid n$.
\end{proof}

\begin{rem}
Note that the eigenvalue statement in Corollary \ref{cor:SinglyValent} is essentially in \cite[Theorem 5.1.11]{MR2972458}.
\end{rem}


\begin{thm}\label{thm:MagicNumbers11}
Suppose $(\Gamma_+,\Gamma_-)$ has annular multiplicities $*11$, and $\Gamma_\pm$ does not have a singly valent vertex at depth $n$, i.e., $\Gamma_+,\Gamma_-$ are both translated extensions of 
$$
\bigraph{gbg1v1p1v1x0p0x1p0x1}.
$$
\begin{enumerate}[(1)]
\item
If $n$ is even, $(\Gamma_+,\Gamma_-)$ is a translated extension of
$
\left(\bigraph{bwd1v1p1v1x0p0x1p0x1duals1v1x2},\bigraph{bwd1v1p1v1x0p1x0p0x1duals1v1x2}\right)
$,
and
$$
(\check{r}-1)\frac{r}{\check{r}} - \frac{(\sigma_A+\sigma_A^{-1})}{[n]}\frac{\sqrt{r}}{\sqrt{\check{r}}}
=
\frac{r[n]-[n+2]}{[n]}.
$$
\item
If $n$ is odd, $(\Gamma_+,\Gamma_-)$ is a translated extension of
$
\left(\bigraph{bwd1v1v1p1v1x0p0x1p0x1duals1v1v2x1x3},\bigraph{bwd1v1v1p1v1x0p0x1p0x1duals1v1v2x1x3}\right)
$,
and 
$$
(r-1) - \frac{(\sigma_A+\sigma_A^{-1})}{[n]}= \frac{[n+2]-r[n]}{r[n]}.
$$
\end{enumerate}
\end{thm}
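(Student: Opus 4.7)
My plan is to apply Theorem~\ref{thm:TriplePoint} after first using the hypotheses to pin down the duality structure of the principal graph pair near depth $n$, and then computing $\underset{\in \cap_{n+1}(\overline{P'})}{\coeff}(A_0)$ via Remark~\ref{rem:Calculate}.

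First I would determine the duality structure. The hypothesis already gives the unoriented local structure: both $\Gamma_+$ and $\Gamma_-$ are translated extensions of $\bigraph{gbg1v1p1v1x0p0x1p0x1}$, i.e., each has an initial triple point at depth $n-1$, two depth-$n$ vertices, and three depth-$(n+1)$ vertices split as $1$--$2$. What remains is to determine the depth-$n$ and depth-$(n+1)$ dualities. I would use Frobenius reciprocity and a parity-sensitive case analysis to rule out all but the listed pairings, observing that (for $n$ even) the $1$--$2$ edge split in $\Gamma_+$ is mirrored by a $2$--$1$ split in $\Gamma_-$, which under Frobenius reciprocity forces exactly the labelled duals of the statement, and similarly that (for $n$ odd) the cross-graph matching $\overline{P}=\check{P}$ is compatible with only one arrangement of depth-$(n+1)$ duals.

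Next I would compute the coefficient. Once the dualities are fixed, the depth-$(n+1)$ vertex $R$ attached to $P$ has a dual $\overline{R}$ whose unique depth-$n$ neighbour $E(\overline{R}) \in \{\check{P}, \check{Q}\}$ (or $\{P, Q\}$ for $n$ odd) is determined. Remark~\ref{rem:Calculate} then collapses to a single summand:
\[
\underset{\in \cap_{n+1}(\overline{P'})}{\coeff}(A_0) \;=\; \frac{\Tr(R)}{\Tr(E(\overline{R}))}\,\underset{\in E(\overline{R})}{\coeff}(A_0).
\]
Using $\Tr(R)=\Tr(P')=[2]\Tr(P)-[n]=\frac{[n+2]-r[n]}{1+r}$, $\Tr(E(\overline{R}))=\frac{[n+1]}{1+\check{r}}$ or $\frac{\check{r}[n+1]}{1+\check{r}}$ as appropriate, and the coefficients $\underset{\in\check{P}}{\coeff}(\check{A})=\frac{1}{1+\check{r}}$, $\underset{\in\check{Q}}{\coeff}(\check{A})=-\frac{1}{1+\check{r}}$ (with the analogous identities for $A,P,Q$ when $n$ is odd), the coefficient reduces to a closed rational expression in $r$, $\check{r}$, $[n]$, and $[n+1]$.

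Finally I would substitute into Equation~\eqref{eqn:EvenObstructionNotSelfDual} (or \eqref{eqn:EvenObstructionSelfDual}) for part~(1) and Equation~\eqref{eqn:OddObstruction} for part~(2), and clear denominators to recover the stated formulas. The main obstacle is the first step: verifying that Frobenius reciprocity combined with the no-singly-valent hypothesis forces the specific dual pattern listed, rather than any of the competing configurations allowed a priori by $*11$ alone. Once that combinatorial bookkeeping is settled, Steps~2 and~3 reduce to straightforward quantum-integer arithmetic.
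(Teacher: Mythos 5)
Your overall strategy is the same as the paper's: pin down the duality data so the extensions in the statement are the only possibilities, use the single-summand version of Remark~\ref{rem:Calculate} to evaluate $\underset{\in \cap_{n+1}(\overline{P'})}{\coeff}(A_0)$, and substitute into the appropriate case of Theorem~\ref{thm:TriplePoint}. Two comments are in order.

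For the first step, the paper does not rederive the initial shape and dual data; it simply cites \cite[Lemmas~6.3 and 6.4]{MR2914056}. Your plan to recover this directly from Frobenius reciprocity is plausible and would make the argument self-contained, but as written it is an outline, not a proof --- the ``parity-sensitive case analysis'' is precisely what those lemmas carry out, and you would need to actually perform it. Since you yourself identify this as ``the main obstacle,'' it is the piece of the argument that remains genuinely unfinished.

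There is also a small but real imprecision in Step~3. For part~(1) the dual data you would establish in Step~1 has both depth-$n$ vertices of $\Gamma_+$ self-dual, so $\overline{P}=P$ and the relevant equation from Theorem~\ref{thm:TriplePoint} is~\eqref{eqn:EvenObstructionSelfDual}, \emph{not}~\eqref{eqn:EvenObstructionNotSelfDual}. You list the self-dual equation only as a parenthetical fallback, with~\eqref{eqn:EvenObstructionNotSelfDual} as your first choice, but that equation applies only when $\overline{P}=Q$, forcing $r=1$, which would be an unwanted extra hypothesis. The duality pattern determines which of the two equations applies, so this should not be left as a branch. Once corrected, your trace formulas and coefficient values ($\underset{\in\check{P}}{\coeff}(\check{A})=\frac{1}{1+\check{r}}$, $\underset{\in Q}{\coeff}(A)=-\frac{1}{1+r}$, $\Tr(P')=\frac{[n+2]-r[n]}{1+r}$) match the paper's computation exactly.
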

\begin{proof}
First, the principal graphs begin as claimed by \cite[Lemmas 6.3 and 6.4]{MR2914056}.
Let $P$ be the bottom vertex at depth $n$ on $\Gamma_+$, and let $P'$ be the bottom vertex at depth $n+1$ on $\Gamma_+$. Let $\check{P}$ be the bottom vertex at depth $n$ on $\Gamma_-$. We record the following traces:
\begin{align*}
\Tr(P) &= \frac{[n+1]}{1+r}
\text{ and }
\Tr(Q) = \frac{r[n+1]}{1+r}
\\
\Tr(\check{P}) &= \frac{[n+1]}{1+\check{r}}
\text{ and }
\Tr(\check{Q}) = \frac{r[n+1]}{1+\check{r}} \text{ provided $n$ is even, and}
\\
\Tr(P')&=[2]\Tr(P)-[n]=[2]\left(\frac{[n+1]}{1+r}\right)-[n]=\frac{[n+2]-r[n]}{1+r}.
\end{align*}
Suppose $n$ is even.
Since $P=\overline{P}$ and $E(\overline{P'})=\check{P}$, the right hand side of Equation \eqref{eqn:EvenObstructionSelfDual} is given by
\begin{align*}
\text{RHS}
&=
-(1+r) \left(\frac{[n+1]}{[n]}\right)\left(\frac{\Tr(P')}{\Tr(\check{P})}\right)\left(\underset{\in \check{P}}{\coeff}\left(\check{A}\right)\right)
\\&=
-(1+r) \left(\frac{[n+1]}{[n]}\right)\left(\frac{[n+2]-r[n]}{1+r}\cdot  \frac{1+\check{r}}{[n+1]}\right)\left(\frac{1}{1+\check{r}} \right)
\\&=
\D\frac{r[n]-[n+2]}{[n]}.
\end{align*}
When $n$ is odd, $E(\overline{P'})=Q$, and the right hand side of Equation \eqref{eqn:OddObstruction} is given by
\begin{align*}
\text{RHS}
&=
-(1+r) \left(\frac{[n+1]}{[n]}\right)\left(\frac{\Tr(P')}{\Tr(Q)}\right)\left(\underset{\in Q}{\coeff}\left(A\right)\right)
\\&=
-(1+r) \left(\frac{[n+1]}{[n]}\right)\left(\frac{[n+2]-r[n]}{1+r}\cdot  \frac{1+r}{r[n+1]}\right)\left(\frac{-1}{1+r} \right)
\\&=
\D\frac{[n+2]-r[n]}{r[n]}.
\end{align*}
\end{proof}

\begin{rem}
Snyder can produce the equations in Theorem \ref{thm:MagicNumbers11} using his technique in \cite{1207.5090}, but his technique does not generalize beyond the $*11$ case. 
\end{rem}

\subsection{Finding chiralities of some examples}

\begin{exs}[Index at most 4]\label{exs:IndexAtMost4}
We now use Theorem \ref{thm:Ocneanu} to determine the chiralities of all non type-$A$ irreducible subfactors with index at most 4 with an initial triple point, and we show $E_7$ and $D_{2k+1}$ are not principal graphs of subfactors. 

Note that the computation for index less than 4 was done previously in \cite[Theorem 4.2.13]{math/9909027} using a different method. Jones' proof of the nonexistence of $D_{2k-1}$ and $E_7$ also shows the chirality is inconsistent with the supertransitivity, but he uses a different formula.

In the table below, $4\leq k$ and $3\leq j\leq \I$.
$$
\begin{array}{|c|c|c|c|c|c|}
\hline
\text{Graph} & n &q & \sigma_A+\sigma_A^{-1} & \text{possible $\sigma_A$} & \text{possible $\omega_A$}
\\\hline
D_{k} & k-2 & \exp\left(\frac{2\pi i}{4k-4}\right)  & 0 & \pm i & -1 \hspace{.2in}(\star\, k\text{ odd})
\\\hline
E_6 & 3 & \exp\left(\frac{2\pi i}{24}\right)  & 1 & \exp\left(\pm \frac{2\pi i}{6}\right) &\exp\left(\pm\frac{2\pi i}{3}\right)
\\\hline
E_7 & 4 & \exp\left(\frac{2\pi i}{36}\right) & 
 2 \sin \left(\frac{2 \pi }{9}\right) & \exp\left(\pm\frac{5 \pi i}{18}\right) & \exp\left(\pm\frac{5 \pi i}{9}\right) \hspace{.2in} \star
\\\hline
E_8 & 5 &\exp\left(\frac{2\pi i}{60}\right)  & \frac{1}{2}(1+\sqrt{5}) &\exp\left(\pm\frac{ 2\pi i}{10}\right) & \exp\left(\pm\frac{2\pi i}{5}\right)
\\\hline
D_{j+2}^{(1)} & 2& 1 &  2 & 1 & 1
\\\hline
E_6^{(1)} & 3 & 1 & 2 & 1 & 1
\\\hline
E_7^{(1)} & 4 & 1 & 2 & 1 & 1
\\\hline
E_8^{(1)} & 6 & 1 & 2 & 1 & 1
\\\hline
\end{array}
$$
The $\star$'s above denote contradictions.
\begin{itemize}
\item
For $D_{k}$ with $k$ odd, $-1$ is not a $(k-2)$-th root of unity.
\item
For $E_7$, $\exp\left(\pm\frac{5 \pi i}{9}\right)$ is not a 4-th root of unity.
\end{itemize}
\end{exs}


\begin{ex}[Fuss-Catalan]
For generic $a,b>2$, the principal graphs of the Bisch-Jones Fuss-Catalan subfactor planar algebras \cite{MR1437496} are extensions of
$$
\left(\bigraph{bwd1v1p1v1x0p0x1p0x1duals1v1x2},\bigraph{bwd1v1p1v1x0p1x0p0x1duals1v1x2}\right).
$$
For these planar algebras, $\omega_A=1$ \cite[Example 3.5.9]{MR2972458}, as can be verified by our formula. 
The traces of the projections labelled as in the proof of Theorem \ref{thm:MagicNumbers11} are given by
\begin{align*}
\Tr(P) &= a^2-1 
&
\Tr(Q) &= a^2(b^2-1)
\\
\Tr(\check{P}) &= b^2(a^2-1) 
&
\Tr(\check{Q}) &= b^2-1
\\
\Tr(P')&=b(a^3-2a).
\end{align*}
Hence the branch factors are given by
$$
r=\frac{a^2(b^2-1)}{a^2-1}
\text{ and }
\check{r} =\frac{b^2-1}{b^2(a^2-1)}
\Longrightarrow 
\frac{r}{\check{r}}=a^2b^2.
$$
Noting that $[n]=[2]=ab$ and $[n+2]=[4]=ab (-2+ab^2)$, we substitute these values into (1) from Theorem \ref{thm:MagicNumbers11} which gives
\begin{align*}
-(\sigma_A+\sigma_A^{-1})
&=
\left(\frac{r[n]-[n+2]}{[n]}-(\check{r}-1)\frac{r}{\check{r}}\right)\frac{\sqrt{\check{r}}}{\sqrt{r}}[n]
\\&=
\left(\frac{\left(\frac{a^2(b^2-1)}{a^2-1}\right)ab-ab (-2+a^2b^2)}{ab}-\left(\frac{b^2-1}{b^2(a^2-1)}-1\right)a^2b^2\right)\left(\frac{1}{ab}\right)ab
\\&=
\left(\frac{a^2(b^2-1)}{a^2-1}\right)+2-a^2b^2-\left( \frac{a^2(b^2-1)}{a^2-1}\right)+a^2b^2
\\&=2.
\end{align*}
Hence $\sigma_A=-1$, and $\omega_A=1$ as claimed.
\end{ex}

\begin{ex}[$U_q({\mathfrak{su}}(3))$]
The principal graphs of the $U_q({\mathfrak{su}}(3))$ subfactors \cite{MR936086} are extensions of
$$
\left(\bigraph{bwd1v1v1p1v1x0p0x1p0x1duals1v1v2x1x3},\bigraph{bwd1v1v1p1v1x0p0x1p0x1duals1v1v2x1x3}\right).
$$
For these planar algebras, $\omega_A=1$, which follows from Kuperberg's $A_2$ spider \cite{MR1403861}, where the generator at depth 3 is a hexagon with legs alternating in and out. Hence it has a $\Z/3$ symmetry. The author would like to thank Scott Morrison for pointing this out.
We verify this with our formula for two $U_q({\mathfrak{su}}(3))$ subfactors:
\begin{align*}
\cQ_1 &=
\left(
\bigraph{bwd1v1v1p1v1x0p1x0p0x1v1x0x0p0x1x1duals1v1v3x2x1},
\bigraph{bwd1v1v1p1v1x0p1x0p0x1v1x0x0p0x1x1duals1v1v3x2x1}
\right)
\\
\cQ_2 &=
\left(
\bigraph{bwd1v1v1p1v1x0p0x1p0x1v0x1x0p1x0x1p0x0x1v0x1x0p1x0x1v1x0duals1v1v2x1x3v2x1},
\bigraph{bwd1v1v1p1v1x0p0x1p0x1v0x1x0p1x0x1p0x0x1v0x1x0p1x0x1v1x0duals1v1v2x1x3v2x1}
\right).
\end{align*}
For $\cQ_1$, the branch factor is the root of $x^3-4 x^2+3 x+1$ which is approximately $1.45$, and $q$ is the root of $x^6-2 x^5+2 x^4-3 x^3+2 x^2-2 x+1$ which is approximately $1.636$. 
For $\cQ_2$, the branch factor is $(2 +\sqrt{2})/2$ and $q=\frac{1}{2} \left(1+\sqrt{2}+\sqrt{2 \sqrt{2}-1}\right)$. 
Using these values in (2) from Theorem \ref{thm:MagicNumbers11}, we get $\sigma_A+\sigma_A^{-1}=-2$.
\end{ex}

\begin{prop}[2D2]\label{prop:2D2}
The 2D2 subfactor planar algebra with principal graphs
$$
\left(
\bigraph{bwd1v1v1p1v1x1v1v1duals1v1v1v1},
\bigraph{bwd1v1v1p1v1x0p1x0p0x1p0x1v0x1x1x0duals1v1v1x3x2x4}
\right)
$$
has chirality $\omega_A=1$.
\end{prop}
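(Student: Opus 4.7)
The plan is to apply Theorem~\ref{thm:TriplePoint} directly, just as in the Fuss-Catalan and $U_q(\mathfrak{su}(3))$ computations above. The 2D2 principal graphs have an initial triple point at depth $n-1=1$, placing us in the $n=2$ (even) case with two depth-$2$ projections $P,Q$ on $\Gamma_+$ forming the branch. The prescribed duality at depth $2$ selects either equation~\eqref{eqn:EvenObstructionSelfDual} (if $\overline{P}=P$) or~\eqref{eqn:EvenObstructionNotSelfDual} (if $\overline{P}=Q$).

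First, I would read off all the relevant traces from the Perron-Frobenius eigenvectors of the adjacency matrices of $\Gamma_\pm$. This determines the branch factors $r=\Tr(Q)/\Tr(P)$ and $\check{r}=\Tr(\check{Q})/\Tr(\check{P})$, the loop modulus $[2]$, and the trace of the depth-$3$ projection $P'$ via $\Tr(P')=[2]\Tr(P)-[n]$. Since the unique depth-$3$ vertex $R$ of $\Gamma_+$ is the only neighbor of $P$ at depth $3$, we have $P'=R$ in the sense of Lemma~\ref{lem:GeneralizedWenzl} and Definition~\ref{defn:PPrime}.

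Next, I would verify the hypothesis of Remark~\ref{rem:Calculate}: inspecting $\Gamma_-$ shows that each of its depth-$3$ vertices has a unique depth-$2$ neighbor, so in particular $\overline{R}$ connects to a unique vertex $E(\overline{R})\in\{\check{P},\check{Q}\}$. Remark~\ref{rem:Calculate} then gives
\[
\underset{\in \cap_{n+1}(\overline{P'})}{\coeff}(\check{A})
= \frac{\Tr(R)}{\Tr(E(\overline{R}))}\,\underset{\in E(\overline{R})}{\coeff}(\check{A}),
\]
where the coefficient on the right equals $1/(1+\check{r})$ if $E(\overline{R})=\check{P}$ and $-\check{r}/(1+\check{r})$ if $E(\overline{R})=\check{Q}$, read off from $\D\check{P}=\frac{\jw{n}+\check{A}}{1+\check{r}}$ and $\D\check{Q}=\frac{\check{r}\jw{n}-\check{A}}{1+\check{r}}$.

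Finally, substituting these values into the relevant equation of Theorem~\ref{thm:TriplePoint} and simplifying will, I expect, collapse to $\sigma_A+\sigma_A^{-1}=\pm 2$, which forces $\sigma_A=\pm 1$ and hence $\omega_A=\sigma_A^2=1$. The main obstacle is purely bookkeeping: matching the specified duality data to decide whether $\overline{P}=P$ or $\overline{P}=Q$, and identifying $E(\overline{R})$. Once these are pinned down from the given principal graph pair, the rest is a direct algebraic identity of exactly the same flavor as the Fuss-Catalan verification.
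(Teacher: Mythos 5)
Your proposal rests on a misreading of the principal graph that derails the whole argument. The 2D2 graphs begin $1-1-1<{}^1_1$ (a single vertex at each of depths $0,1,2$ and a branch into two vertices at depth $3$), so the initial triple point is at depth $2$, not depth $1$. That puts us in the $(n-1)$-supertransitive setup with $n=3$, which is \emph{odd}, so the relevant equation from Theorem~\ref{thm:TriplePoint} is \eqref{eqn:OddObstruction}, not \eqref{eqn:EvenObstructionSelfDual} or \eqref{eqn:EvenObstructionNotSelfDual}. Everything downstream of "$n=2$ (even)" in your write-up (the choice of equation, the parity bookkeeping for $\overline{P}$, the formula $\Tr(P')=[2]\Tr(P)-[n]$ with $[n]=[2]$, etc.) is therefore off.

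More seriously, even after fixing $n=3$ you run into the problem that the paper silently addresses by passing to the dual. In $\Gamma_+$ there is a \emph{unique} vertex $R$ at depth $n+1=4$, and it is self-dual and adjacent to \emph{both} $P$ and $Q$ at depth $3$. Thus $E(\overline R)$ is not well-defined, the hypothesis of Remark~\ref{rem:Calculate} fails for $\Gamma_+$, and you cannot read off $\coeff_{\in\cap_{n+1}(\overline{P'})}(A)$ from this graph. The paper's fix is to work with the dual planar algebra, swapping the two graphs. In the dual's $\Gamma_+$, $P$ has two depth-$4$ neighbors $P_1',P_2'$; the dual data sends $P_1'$ to itself (with $E(\overline{P_1'})=P$) and $P_2'$ to a vertex whose only depth-$3$ neighbor is $Q$ (so $E(\overline{P_2'})=Q$). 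Remark~\ref{rem:Calculate} then gives a two-term sum, which together with $r=1$ (so the $(r-1)$ term drops out of~\eqref{eqn:OddObstruction}) yields $\sigma_A+\sigma_A^{-1}=-2$ and $\omega_A=1$. So the missing ingredients are: (i) the correct $n=3$ and hence the odd-case equation, (ii) the observation that the original $\Gamma_+$ violates the hypothesis of Remark~\ref{rem:Calculate}, forcing a pass to the dual, and (iii) the resulting two-term sum over $P_1',P_2'$ rather than a single term.
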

\begin{proof}
We work with the dual subfactor planar algebra with principal graphs 
$$
\left(
\bigraph{bwd1v1v1p1v1x0p1x0p0x1p0x1v0x1x1x0duals1v1v1x3x2x4},
\bigraph{bwd1v1v1p1v1x1v1v1duals1v1v1v1}
\right).
$$
Let $P$ be the bottom vertex at depth 3, and let $P_1',P_2'$ be the bottom two vertices at depth 4, where $P_1'$ is below $P_2'$. The traces of $P,P_1',P_2'$ are given by
$$
\Tr(P)=\Tr(Q)=\sqrt{7 + 3 \sqrt{5}},\, \Tr(P_1')=\frac{1}{2} (1 + \sqrt{5}), \text{ and } \Tr(P_2')=\frac{1}{2} (3 + \sqrt{5}).
$$
The right hand side in Equation \eqref{eqn:OddObstruction} is given by
\begin{align*}
\text{RHS}
&=
-(1+r)\left(\frac{[n+1]}{[n]}\right)\left( 
\left(\frac{\Tr(P_1')}{\Tr(P)}\right)\underset{\in P}{\coeff}\left(A\right)
+
\left(\frac{\Tr(P_2')}{\Tr(Q)}\right)\underset{\in Q}{\coeff}\left(A\right)
\right)
\\&=
-2\left(\frac{[n+1]}{[n]}\right)\left( 
\left(\frac{\Tr(P_1')}{\Tr(P)}\right)\left(\frac{1}{2}\right)
-
\left(\frac{\Tr(P_2')}{\Tr(Q)}\right)\left(\frac{1}{2}\right)
\right)
\\&=
\frac{2}{[n]}.
\end{align*}
Since the left hand side is $\D-\frac{\sigma_A+\sigma_A^{-1}}{[n]}$, we have $\sigma_A+\sigma_A^{-1}=-2$, so $\omega_A=1$.
\end{proof}

\begin{prop}
The 2D2 subfactor planar algebra is singly generated at depth $3$. More precisely, any $3$-box not in $\TL_{3,\pm}$ generates all of 2D2.
\end{prop}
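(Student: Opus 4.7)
The plan is to prove this in two parts: first, that the planar subalgebra of 2D2 generated by any non-Temperley-Lieb 3-box contains the depth-$2$ low-weight vector $A$; second, that $A$ alone generates all of 2D2 as a planar algebra.

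For the first part, I would decompose $\cP_{3,\pm} \ominus \TL_{3,\pm}$ into its irreducible annular Temperley-Lieb submodules. This space is the direct sum of the annular consequences of the unique depth-$2$ low-weight vector $A$ together with a subspace $V_{\text{new}}$ of new low-weight vectors at depth $3$, whose dimension is determined by the annular multiplicities of 2D2 (computable from the principal graph via Subsection \ref{sec:Annular}). Given any $x \in \cP_{3,\pm} \setminus \TL_{3,\pm}$, decompose $x = x_{\TL} + x_{AC} + x_{\text{new}}$ accordingly. If $x_{AC} \neq 0$, cyclicity of the annular TL module generated by $A$ means that suitable cappings and rotations of $x$ recover $A$, placing $A$ in the planar subalgebra generated by $x$. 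If instead $x_{AC} = 0$ but $x_{\text{new}} \neq 0$, I would form the Liu-style quadratic expression $\cF(x_{\text{new}})^* \cF(x_{\text{new}})$ and expand it via Lemma \ref{lem:Liu}, exactly as in the proof of Theorem \ref{thm:TriplePoint}. Tracking the coefficient of the annular image of $A$ on the right-hand side yields a specific expression in the rotational eigenvalues of the vectors in $V_{\text{new}}$, and one argues using $\omega_A = 1$ from Proposition \ref{prop:2D2} and the orthogonality of distinct annular modules that this coefficient is nonzero, again placing $A$ in the planar subalgebra generated by $x$.

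For the second part, I would induct on depth. The depth-$2$ projections satisfy $P = (\jw{2} + A)/(1 + r)$ and $Q = (r \jw{2} - A)/(1 + r)$, so both lie in the planar subalgebra generated by $A$. At each subsequent depth $k + 1$, Lemma \ref{lem:GeneralizedWenzl} expresses the new-stuff projection past a depth-$k$ projection $P$ as a planar combination of $P$ and $\jw{k}$; inductively, every projection appearing at any depth of the principal graph of 2D2 lies in the planar subalgebra generated by $A$. Since these projections span 2D2 as a planar algebra, the claim follows.

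The main obstacle is the case $x_{AC} = 0$ in the first part: one must ensure that the $A$-coefficient in the expansion of $\cF(x_{\text{new}})^* \cF(x_{\text{new}})$ does not vanish identically for nonzero $x_{\text{new}}$. This reduces to a concrete calculation involving the rotational eigenvalues of the vectors in $V_{\text{new}}$, and the chirality datum $\omega_A = 1$ provided by Proposition \ref{prop:2D2} is precisely what makes this calculation tractable.
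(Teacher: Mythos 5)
There is a genuine gap, and it stems from a misread of the setup. 2D2 is $2$-supertransitive, so the new low-weight rotational eigenvector $A$ lives at depth $3$, not depth $2$ (its annular multiplicity sequence is $*12$, i.e.\ $a_3 = 1$, $a_4 = 2$). Consequently $\cP_{3,\pm}\ominus\TL_{3,\pm}$ has no ``annular consequences'' component from below and no further ``new'' component: it is simply the one-dimensional span of $A$. Your decomposition $x = x_{\TL} + x_{AC} + x_{\text{new}}$ therefore collapses to $x = x_{\TL} + cA$ with $c\neq 0$, making the first part of the argument a one-line observation, and leaving the entire Liu-relation case analysis for $x_{\text{new}}$ without any content to act on. Said differently, ``contains $A$'' is immediate, and the machinery you set up to establish it addresses a situation that does not occur.

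The substantive issue is the second part, and there the proposed induction has a real hole. Lemma \ref{lem:GeneralizedWenzl} requires that the vertex $[P]$ at depth $k$ connect to a \emph{unique} vertex at depth $k-1$ by a single edge; this hypothesis fails at several vertices of the 2D2 principal graphs (for instance, the depth-$4$ vertex on the graph $\bigraph{bwd1v1v1p1v1x1v1v1duals1v1v1v1}$ connects to \emph{both} depth-$3$ vertices). Even setting that aside, showing that a planar subalgebra contains all the minimal projections of $\cP_{k,\pm}$ does not by itself show it contains the full intertwiner spaces, so the conclusion ``these projections span 2D2 as a planar algebra'' needs justification it doesn't get. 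The paper instead argues by contradiction using the obstruction theorems already proved: if $A$ generated a proper subalgebra, that subalgebra would have annular multiplicities $*10$ or $*11$, but $r=\check{r}=1$, $\omega_A = 1$, and $[2]=\sqrt{3+\sqrt{5}}>2$ are incompatible with Corollary \ref{cor:SinglyValent} and Theorem \ref{thm:MagicNumbers11}. That route sidesteps both of the difficulties above.
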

\begin{proof}
Since 2D2 has annular multiplicities $*12$, if $A$ generated a proper planar subalgebra, then the planar subalgebra would have either annular multiplicities $*10$ or $*11$. However, $r=\check{r}=1$, which contradicts Corollary \ref{cor:SinglyValent} and Theorem \ref{thm:MagicNumbers11}, since we know $[2]=\sqrt{3+\sqrt{5}}> 2$.
\end{proof}

\subsection{Eliminating a $*11$ weed above index 5}\label{sec:11Weeds}

The following weed with annular multiplicities $*11$ appears when running the principal graph odometer \cite{MR2914056} above index $5$:
$$
\cW=\left(\bigraph{bwd1v1v1p1v0x1p1x0p1x0v1x0x0p1x0x0p0x1x0p0x0x1v1x0x0x0p0x1x0x0p0x0x1x0p0x0x1x0p0x0x0x1v1x0x0x0x0p0x1x0x0x0p0x0x0x1x0p0x0x0x0x1p0x0x0x0x1duals1v1v2x1x3v1x3x2x5x4}, \bigraph{bwd1v1v1p1v1x0p1x0p0x1v1x0x0p0x0x1p0x1x0p0x0x1v1x0x0x0p0x1x0x0p0x0x1x0p0x0x1x0p0x0x0x1v0x0x1x0x0p1x0x0x0x0p0x0x0x0x1p1x0x0x0x0p0x1x0x0x0duals1v1v1x3x2v3x4x1x2x5}\right).
$$
Note that $\cW$ begins as claimed in Theorem \ref{thm:MagicNumbers11}, after applying a suitable graph automorphism.

We now eliminate $\cW$ using Theorem \ref{thm:MagicNumbers11} and the technique of \cite[Section 4.1]{MR2902285}. 
First, we determine the \underline{relative dimensions} of the vertices as functions of $n$ and $q$, where $q>1$ such that $[2]=q+q^{-1}$. 
Second, we calculate the \underline{relative branch factors}, which are the expressions for $r,\check{r}$ as functions of $n,q$.
Finally, we use Theorem \ref{thm:MagicNumbers11} to write $\sigma_A+\sigma_A^{-1}+2\in [0,4]$ as a function of $n,q$, and we show that this value is always negative, a contradiction. 

\begin{thm}\label{thm:EliminateW1}
There is no subfactor with principal graphs a translated extension of $\cW$.
\end{thm}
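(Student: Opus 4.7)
The plan is to follow the three-step recipe sketched immediately before the statement. Let $(\Gamma_+,\Gamma_-)$ be a translated extension of $\cW$, let $n$ be the depth at which the projections $P,Q$ sit (so $\Gamma_\pm$ is $(n-1)$-supertransitive), and let $q>1$ with $[2]=q+q^{-1}$; since $\cW$ emerges from the principal graph odometer above index $5$, we have $[2]^2>5$. The dual data of $\cW$ pins down the parity of $n$ within each admissible translation, so the argument reduces to invoking the corresponding parity case of Theorem \ref{thm:MagicNumbers11}.

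\emph{Step 1 (relative dimensions).} Apply the Perron--Frobenius relation $[2]\Tr(v)=\sum_{v'\sim v}\Tr(v')$ recursively along the chain from $\star$ through the branch in $\cW$. Because $\cW$ has a fixed finite shape near the branch and the translated prefix is just an $A_n$ tail, this writes $\Tr(P),\Tr(Q),\Tr(\check P),\Tr(\check Q)$ (together with any other trace needed to evaluate the right-hand side of Theorem \ref{thm:MagicNumbers11}) as explicit rational combinations of the quantum integers $[n-1],[n],[n+1],[n+2]$ with coefficients that are independent of $n$. Solving the small linear system arising from the vertices at depths $n-1$, $n$, $n+1$ is routine.

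\emph{Step 2 (branch factors).} From Step 1, form the relative branch factors $r=\Tr(Q)/\Tr(P)$ and $\check r=\Tr(\check Q)/\Tr(\check P)$ as explicit rational functions of $n$ and $q$; this is the content of the middle step in the author's outline.

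\emph{Step 3 (contradiction).} Substitute $r,\check r$ into the parity-appropriate equation of Theorem \ref{thm:MagicNumbers11} and solve for $\sigma_A+\sigma_A^{-1}$. Since $|\sigma_A|=1$ we always have $\sigma_A+\sigma_A^{-1}\in[-2,2]$, equivalently $\sigma_A+\sigma_A^{-1}+2\in[0,4]$, so it suffices to show that the expression computed from Theorem \ref{thm:MagicNumbers11} falls strictly outside this interval for every admissible pair $(n,q)$ --- the author signposts that the sign to target is $\sigma_A+\sigma_A^{-1}+2<0$. After clearing denominators this reduces to checking the uniform sign of a Laurent polynomial in $q$ with coefficients depending on $n$. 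The main obstacle is precisely this uniform sign verification: the natural tools are the monotonicity of the ratios $[k+1]/[k]$ in $q>1$ together with the lower bound $[2]^2>5$, supplemented by an explicit check at the smallest values of $n$ of the prescribed parity if the asymptotic $[n+2]$-dominance does not immediately cover them.
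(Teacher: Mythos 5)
Your outline correctly identifies the paper's strategy (it is exactly the three-step recipe the author signposts, and you are right that the dual data forces the odd case of Theorem \ref{thm:MagicNumbers11}, that $\sigma_A+\sigma_A^{-1}+2\in[0,4]$, and that the goal is to show the formula produces a negative value). However, as written this is a plan rather than a proof, and the plan contains a concrete gap and a potential failure.

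The gap is that Step 3 — ``after clearing denominators this reduces to checking the uniform sign of a Laurent polynomial in $q$'' — is precisely where all of the work lives, and you neither carry it out nor give a mechanism that plausibly would. Once the branch factor $r=\check r$ is computed for $\cW$ and substituted into $\sigma_A+\sigma_A^{-1} = r[n]-[n+2]/r$, the numerator of $\sigma_A+\sigma_A^{-1}+2$ is (as the paper finds) a product $g(n,q)h(n,q)$ of two cubics in $a=q^n$ whose coefficients are degree-$18$ polynomials in $q$; the sign argument requires verifying, for both $g$ and $h$, that the partial sums of the $q^n$-coefficients (from the top degree down) are nonnegative on $q\geq 1.6789$, and then a telescoping estimate to get nonnegativity of the whole polynomial. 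Your suggested tools — monotonicity of $[k+1]/[k]$ and an ``explicit check at small $n$'' — do not obviously touch this: the problematic terms are mixed products of $q^n$-powers and fixed-degree Laurent polynomials, and the asymptotic $[n+2]$-dominance you allude to is exactly what has to be made quantitative.

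A second, sharper issue: you take the index bound to be $[2]^2>5$, i.e.\ $q>\tfrac{1+\sqrt5}{2}\approx 1.618$. The paper's sign claims are made (and only verified) for $q\geq 1.6789$, which comes from the Perron--Frobenius eigenvalue of the fixed graph $\cW$ itself ($d\geq 2.27453$), not merely from the index-$5$ cutoff. There is a nontrivial window $q\in(1.618,1.6789)$ where your bound gives you nothing and the paper's estimates are not asserted; without the graph-norm bound the sign verification may genuinely fail there. You would also want to state the choice of $P$ — the $2$-valent vertex of $\Gamma_+$ at depth $n$ after a graph automorphism — since Theorem \ref{thm:MagicNumbers11}(2) is applied with that labelling, and a different choice of $P$ changes $r$ and the ensuing computation.
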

\begin{proof}
Suppose we had such a subfactor, and note that $n\geq 3$ and $d\geq 2.27453$, so $q\geq 1.6789$.
Let $P$ be the 2-valent vertex on $\Gamma_+$ at depth $n$, and let $\check{P}$ be the bottom vertex on $\Gamma_-$ at depth $n$.
The formula in (2) from Theorem \ref{thm:MagicNumbers11} rearranges to give
\begin{equation}\label{eqn:OddMagic11}
\sigma_A+\sigma_A^{-1}=r[n]-\frac{[n+2]}{r},
\end{equation}
and the relative branch factors are given by
\begin{align*}
r=
\check{r}=
\frac{\left(q^2+1\right) \left(q^{2 n+8}+3 q^{2 n+10}+2 q^{2 n+12}+2 q^{2 n+14}-q^6-3 q^4-2 q^2-2\right)}{2 q^{2 n+8}+4 q^{2 n+10}+4 q^{2 n+12}+4 q^{2 n+14}+q^{2 n+16}-2 q^8-4 q^6-4 q^4-4 q^2-1}.
\end{align*}
We then have 
$$\sigma_A+\sigma_A^{-1}+2=
r[n]-\frac{[n+2]}{r}+2
=\D \frac{g(n,q)h(n,q)}{k(n,q)},$$ 
where $g,h,k$ are given by
\begin{align*}
g(n,q) 
=&
a^3 \left(q^{18}+2 q^{17}+4 q^{16}+4 q^{15}+4 q^{14}+5 q^{13}+4 q^{12}+4 q^{11}+2 q^{10}+q^9\right)
\\&+a^2 \left(-2 q^{17}-q^{16}-4 q^{15}-4 q^{14}-5 q^{13}-4 q^{12}-4 q^{11}-4 q^{10}-q^9-2 q^8\right)
\\&+a \left(-2 q^{10}-q^9-4 q^8-4 q^7-4 q^6-5 q^5-4 q^4-4 q^3-q^2-2 q\right)
\\&+(q^9+2 q^8+4 q^7+4 q^6+5 q^5+4 q^4+4 q^3+4 q^2+2 q+1)
\\
h(n,q)
=&
a^3 \left(q^{18}-2 q^{17}+4 q^{16}-4 q^{15}+4 q^{14}-5 q^{13}+4 q^{12}-4 q^{11}+2 q^{10}-q^9\right)
\\&+a^2 \left(-2 q^{17}+q^{16}-4 q^{15}+4 q^{14}-5 q^{13}+4 q^{12}-4 q^{11}+4 q^{10}-q^9+2 q^8\right)
\\&+a \left(-2 q^{10}+q^9-4 q^8+4 q^7-4 q^6+5 q^5-4 q^4+4 q^3-q^2+2 q\right)
\\&+(q^9-2 q^8+4 q^7-4 q^6+5 q^5-4 q^4+4 q^3-4 q^2+2 q-1)
\\
k(n,q)=&
-q^{n+1}(q-1) (q+1) \left(q^2+1\right) 
\\&\times\left(a^{2}(2 q^{14}+2 q^{12}+3 q^{10}+q^{8})-q^6-3 q^4-2 q^2-2\right)
\\&\times \left(a^2(q^{16}+4 q^{14}+4 q^{12}+4 q^{10}+2 q^{8})-2 q^8-4 q^6-4 q^4-4 q^2-1\right)
\end{align*}
using the shorthand $a=q^n$.
Note that $k(n,q)$ is always negative.
\begin{claim*}
For all $n\geq 0$ and $q\geq 1.6789$, $g(n,q)\geq 0$.
\end{claim*}
\begin{proof}[Proof of Claim]
We break up $g(n,q)$ into four parts: $g(n,q)=\sum_{j=0}^3 a^j g_j(q)$ where
\begin{align*}
g_3(q) &=
q^{18}+2 q^{17}+4 q^{16}+4 q^{15}+4 q^{14}+5 q^{13}+4 q^{12}+4 q^{11}+2 q^{10}+q^9\\
g_2(q) &=
-2 q^{17}-q^{16}-4 q^{15}-4 q^{14}-5 q^{13}-4 q^{12}-4 q^{11}-4 q^{10}-q^9-2 q^8\\
g_1(q) &=
-2 q^{10}-q^9-4 q^8-4 q^7-4 q^6-5 q^5-4 q^4-4 q^3-q^2-2 q\\
g_0(q) &=
q^9+2 q^8+4 q^7+4 q^6+5 q^5+4 q^4+4 q^3+4 q^2+2 q+1
\end{align*}
One checks that for $q\geq 1.6789$, 
\begin{align*}
0&< g_0(q)\\
0&< g_3(q)+g_2(q)+g_1(q)\\
0&< g_3(q)+g_2(q)\text{ and }\\
0&< g_3(q).
\end{align*}
Hence for $a=q^n$ with $q\geq 1.6789$ and $n\geq 0$, we have 
\begin{align*}
0 &< 
g_3(q)+g_2(q)+g_1(q)+g_0(q)
\\&\leq 
a (g_3(q)+ g_2(q)+g_1(q))+g_0(q)
\\&\leq 
a (a(g_3(q)+ g_2(q))+g_1(q))+g_0(q)
\\&\leq 
a (a(ag_3(q)+ g_2(q))+g_1(q))+g_0(q)
\\&=g(n,q).
\end{align*}
\end{proof}

\begin{claim*}
For $n\geq 2$ and  $q\geq 1.6789$, $h(n,q)\geq 0$.
\end{claim*}
\begin{proof}[Proof of Claim]
Since $n\geq 2$, we replace $n$ with $m=n+2$. We break up $h(m+2,q)$ into four parts: $h(m+2,q)=\sum_{j=0}^3 b^j h_j(q)$ where
\begin{align*}
h_3(q) &=
q^{24}-2 q^{23}+4 q^{22}-4 q^{21}+4 q^{20}-5 q^{19}+4 q^{18}-4 q^{17}+2 q^{16}-q^{15}\\
h_2(q) &=
-2 q^{21}+q^{20}-4 q^{19}+4 q^{18}-5 q^{17}+4 q^{16}-4 q^{15}+4 q^{14}-q^{13}+2 q^{12}\\
h_1(q) &=
-2 q^{12}+q^{11}-4 q^{10}+4 q^9-4 q^8+5 q^7-4 q^6+4 q^5-q^4+2 q^3\\
h_0(q) &=
q^9-2 q^8+4 q^7-4 q^6+5 q^5-4 q^4+4 q^3-4 q^2+2 q-1
\end{align*}
and $b=q^m$. The rest is identical to the previous claim replacing $g$'s with $h$'s, $a$ with $b$, and $n$ with $m$ (except that the final conclusion is that $h(m+2,q)>0$).
\end{proof}
Thus $\D 0\leq \sigma_A+\sigma_A^{-1}+2= \frac{g(n,q)h(n,q)}{k(n,q)}<0$ for $n\geq 2$, a contradiction.
\end{proof}

\section{Multiplicity 2 - some quadruple points}\label{sec:Multiplicity2}
Suppose $(\Gamma_+,\Gamma_-)$ has multiplicity $2$, $n\geq 2$ is even, and two of the vertices at depth $n$ of $\Gamma_\pm$ are dual to each other, i.e., $(\Gamma_+,\Gamma_-)$ is an even translated extension of
$$
\cQ=\left(\bigraph{bwd1v1p1p1duals1v1x3x2},\bigraph{bwd1v1p1p1duals1v1x3x2}\right).
$$
For examples of subfactors with such principal graphs, see Examples \ref{exs:Index6}.
We use the following conventions:
\begin{itemize}
\item $P,Q,R$ and $\check{P},\check{Q},\check{R}$ are the projections at depth $n$ of $\Gamma_\pm$ from bottom to top,
\item $\D r= \frac{2\Tr(Q)}{\Tr(P)}$,
\item  $A=rP-(Q+R)$ and $B=Q-R\in \cP_{n,+}$ are low-weight rotational eigenvectors with distinct eigenvalues $\omega_A,\omega_B$ (by Proposition \ref{prop:LowWeight} below),
\item row reducing 
$
\left(
\begin{array}{ccc|c}
r & -1 & -1 & A
\\
0 & -1 & 1 & B
\\
1 & 1 & 1 & \jw{n}
\end{array}
\right)
$
yields
$$
\begin{pmatrix}
P
\\
Q
\\
R
\end{pmatrix}
=
\D
\frac{1}{2(1+r)}
\begin{pmatrix}
2A+2\jw{n}
\\
-A-(1+r)B+r\jw{n}
\\
-A+(1+r)B+r\jw{n}
\end{pmatrix},
$$
\item
Substituting the formulas for $P,Q,R$ yields
$$
\begin{pmatrix}
A^2
\\
AB
\\
B^2
\end{pmatrix}
=
\begin{pmatrix}
r^2P+Q+R
\\
R-Q
\\
Q+R
\end{pmatrix}
=
\D
\begin{pmatrix}
(r-1)A+r\jw{n}
\\
-B
\\
\frac{-1}{1+r}A+\frac{r}{1+r}\jw{n}
\end{pmatrix},
$$

\item the previous 4 lines have analogous formulas with checks, and
\item $\D\cF(A)=\frac{\sqrt{r}}{\sqrt{\check{r}}}\sigma_A  \check{A}$ and $\D\cF(B)=\frac{\sqrt{r(1+\check{r})}}{\sqrt{\check{r}(1+r)}}\sigma_B  \check{B}$ (also by Proposition \ref{prop:LowWeight}).
\end{itemize}
The next result is a modified version of \cite[Lemma 2.1]{MR2993924}. We provide a short proof for convenience and completeness.

\begin{prop}\label{prop:LowWeight}
The elements $A=rP-(Q+R)$ and $B=Q-R$ are rotational low weight eigenvectors such that $\omega_A^{n/2}=1$ and $\omega_B^{n/2}=-1$.
The same holds with checks, and thus there are completely determined $\sigma_A,\sigma_B$ such that $\sigma_A^2=\omega_A, \sigma_B^2=\omega_B$, and the above formulas hold for $\cF(A),\cF(B)$.
\end{prop}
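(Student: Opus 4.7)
The plan is to first show that $A$ and $B$ are low-weight vectors at depth $n$, then identify them as $\pm 1$ eigenvectors of the half-rotation $\cF^n$ and deduce that they are rotational eigenvectors with distinct eigenvalues, and finally extract the formulas for $\cF(A)$ and $\cF(B)$ from unitarity. The duality data in $\cQ$ forces $\overline{P}=P$, $\overline{Q}=R$, and $\overline{R}=Q$ at depth $n$, so $\Tr(Q)=\Tr(R)$. Combined with $r=2\Tr(Q)/\Tr(P)$, this gives $\Tr(A)=\Tr(B)=0$. Because $P,Q,R$ are subprojections of $\jw{n}$, both $A$ and $B$ lie in $\jw{n}\cP_{n,+}\jw{n}$. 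In the $(n-1)$-supertransitive setting, $\jw{n}\TL_{n,+}\jw{n}=\C\jw{n}$ (any Temperley-Lieb diagram other than the identity has a cap at its boundary and so is killed on one side by $\jw{n}$), so for any $T\in\TL_{n,+}$ we have $\jw{n}T^*\jw{n}=\lambda_T\jw{n}$, whence $\langle X,T\rangle=\Tr(X\jw{n}T^*\jw{n})=\lambda_T\Tr(X)$ vanishes for traceless $X\in\jw{n}\cP_{n,+}\jw{n}$. Thus $A$ and $B$ are orthogonal to $\TL_{n,+}=AC_{n,+}$, hence low weight, and by the multiplicity-2 hypothesis the linearly independent pair $\{A,B\}$ spans the 2-dimensional low-weight subspace. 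The same argument applies to $\check{A},\check{B}$ in $\cP_{n,-}$.

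Next, the bar involution on $\cP_{n,\pm}$ (with $n$ even) is precisely the half-rotation $\cF^n$, and $(\cF^n)^2=\cF^{2n}=\mathrm{id}$. Applying bar to $A$ and $B$ using the duality yields $\overline{A}=r\overline{P}-(\overline{Q}+\overline{R})=A$ and $\overline{B}=R-Q=-B$. Since $\cF^2$ commutes with $\cF^n$, it preserves the $\pm 1$-eigenspaces of $\cF^n$ in the low-weight subspace, and these eigenspaces are each 1-dimensional (spanned by $A$ and $B$ respectively). Hence $A,B$ must themselves be eigenvectors of $\cF^2$ with eigenvalues $\omega_A,\omega_B$ satisfying $\omega_A^{n/2}=1$ and $\omega_B^{n/2}=-1$, forcing $\omega_A\neq\omega_B$. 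The identical argument on $\cP_{n,-}$ gives the same statements for $\check{A},\check{B}$.

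For the rotation formulas, $\cF\colon\cP_{n,+}\to\cP_{n,-}$ is a unitary intertwiner for both the annular $\TL$ action and for $\cF^n$, so $\cF(A)$ is a low-weight bar-fixed vector in $\cP_{n,-}$, forcing $\cF(A)\in\C\check{A}$; similarly $\cF(B)\in\C\check{B}$. The scalars are pinned down by computing norms: using $A^2=(r-1)A+r\jw{n}$ gives $|A|^2=\Tr(A^2)=r[n+1]$, and analogously $|\check{A}|^2=\check{r}[n+1]$, yielding $\cF(A)=\frac{\sqrt{r}}{\sqrt{\check{r}}}\sigma_A\check{A}$ for a unit $\sigma_A$ completely determined once $\check{A}$ is fixed; applying $\cF$ once more gives $\cF^2(A)=\sigma_A^2A$, hence $\sigma_A^2=\omega_A$. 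The analogous computation for $B$, now with $|B|^2=2\Tr(Q)=r[n+1]/(1+r)$ and $|\check{B}|^2=\check{r}[n+1]/(1+\check{r})$, yields $\cF(B)=\frac{\sqrt{r(1+\check{r})}}{\sqrt{\check{r}(1+r)}}\sigma_B\check{B}$ with $\sigma_B^2=\omega_B$. The main subtlety is the second step: without the bar-eigenvalue distinction between $A$ and $B$, their span would only be known to be a rotation-invariant subspace of the low-weight subspace, and $A,B$ individually might not be rotational eigenvectors.
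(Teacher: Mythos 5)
Your proof is correct and follows essentially the same route as the paper's: show $\spann\{A,B\}=\TL_{n,+}^\perp$ using tracelessness of $A,B\in\jw{n}\cP_{n,+}\jw{n}$, use the duality data to compute $\cF^n(A)=A$ and $\cF^n(B)=-B$ so that the commuting rotation $\cF^2$ must preserve the one-dimensional $\pm1$-eigenspaces, and pin down the scalars in $\cF(A),\cF(B)$ by norm computations. The only cosmetic difference is that you spell out why $A,B\perp\TL_{n,+}$ via $\jw{n}\TL_{n,+}\jw{n}=\C\jw{n}$, which the paper leaves implicit.
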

\begin{proof}
First, note that $\spann\{A,B\}=\TL_{n,+}^\perp$, since $A,B$ are nonzero, orthogonal, and have trace zero.
Since $P$ is self-dual and $Q$ is dual to $R$,
\begin{align*}
\rho^{n/2}(A) &=\rho^{n/2}(rP-(Q+R))=rP-(Q+R)=A\text{ and}\\
\rho^{n/2}(B) &= \rho^{n/2}(Q-R)=R-Q=-B.
\end{align*}
Thus $\{A,B\}$ is a basis of eigenvectors for $\rho^{n/2}$ on $\TL_{n,+}^\perp$.
Let $\{v_1,v_2\}$ be a basis of eigenvectors for $\rho$ on $\TL_{n,+}^\perp$. 
Then $\{v_1,v_2\}$ is a basis of eigenvectors for $\rho^{n/2}$, so up to scaling, we must have $v_1=A$ and $v_2=B$.
The same holds with checks. 

Since we know $\omega_A\neq \omega_B$, we know $\cF(A)=\sigma_A\lambda_A \check{A}$ for some real scalar $\lambda_A$ and a completely determined $\sigma_A$, and similar for $B$. Taking the norm squared of each side gives the value of $\lambda_A,\lambda_B$.
\end{proof}

Since $P,Q$ only connect to $\jw{n-1}$ at depth $n-1$, we may define $P',Q'$ and $\cap_{n+1}(\overline{P'}),\cap_{n+1}(\overline{Q'})$ as in Definition \ref{defn:PPrime} applied to $P,Q$ respectively.

\begin{thm}\label{thm:QuadruplePoint}
We have the following equations:
\begin{align}
(\check{r}-1)\frac{r}{\check{r}} 
- 
\frac{(\sigma_A+\sigma_A^{-1})}{[n]}\frac{\sqrt{r}}{\sqrt{\check{r}}}
=&
-(1+r) \frac{[n+1]}{[n]} \left(\underset{\in \cap_{n+1}(\overline{P'})}{\coeff}\left(\check{A}\right)\right)
\tag{$\cQ$A1}\label{eqn:QA1}
\\
\left(
\left(\check{r}-r-2\right)
+
\frac{(\sigma_A+\sigma_A^{-1})}{[n]}\sqrt{r\check{r}}
\right)
\frac{r}{\check{r}}
=&
-2r(1+r) \frac{[n+1]}{[n]} \left(\underset{\in \cap_{n+1}(\overline{Q'})}{\coeff}\left(\check{A}\right)\right)
\tag{$\cQ$A2}\label{eqn:QA2}
\\
\left(
(\sigma_A\sigma_B^{-1}+\sigma_A^{-1}\sigma_B)
-
\frac{(\sigma_B+\sigma_B^{-1})}{[n]}
\right.&\left.
\sqrt{r\check{r}}
\right)
\left(\frac{r}{\check{r}}\right)\sqrt{1+\check{r}}\sqrt{1+r}
\notag\\
=&
-2r(1+r) \frac{[n+1]}{[n]} \left(\underset{\in \cap_{n+1}(\overline{Q'})}{\coeff}\left(\check{B}\right)\right)
\tag{$\cQ$B}\label{eqn:QB}
\end{align}
\end{thm}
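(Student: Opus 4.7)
The plan is to mimic the proof of Theorem \ref{thm:TriplePoint}: for each of the three equations, I apply Liu's relation (Lemma \ref{lem:Liu}) to an appropriate projection at depth $n$ and extract the coefficient of $\check{A}$ or $\check{B}$ on both sides, using the multiplication tables
$\check{A}^2 = (\check{r}-1)\check{A}+\check{r}\jw{n}$, $\check{A}\check{B} = -\check{B}$, and $\check{B}^2 = \frac{-1}{1+\check{r}}\check{A}+\frac{\check{r}}{1+\check{r}}\jw{n}$, and the fact that $\cF$ being unitary and $|\sigma_A|=|\sigma_B|=1$ gives $\cF(A)^* = \frac{\sqrt{r}}{\sqrt{\check{r}}}\sigma_A^{-1}\check{A}$ and likewise for $B$.

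For \eqref{eqn:QA1}, I apply Liu's relation to $\overline{P}=P$. Writing $P = \frac{A+\jw{n}}{1+r}$, the left side $\cF(P)^*\cF(P)$ expands into four terms. The coefficient of $\check{A}$ comes from (i) $\frac{r}{\check{r}}\check{A}^2$ contributing $\frac{r(\check{r}-1)}{\check{r}}$ and (ii) the two cross terms $\cF(A)^*\cF(\jw{n})$ and $\cF(\jw{n})^*\cF(A)$, each picking out (via Wenzl's recursion) the single planar diagram in $\jw{n}$ with coefficient $(-1)^{n-1}/[n]$; their sum contributes $(\sigma_A+\sigma_A^{-1})\frac{(-1)^{n-1}\sqrt{r}}{[n]\sqrt{\check{r}}}$. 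Setting this equal to $-\frac{\Tr(P)}{[n]}\cap_{n+1}(\overline{P'})$ from the right side of Liu's relation, using $\Tr(P)=\frac{[n+1]}{1+r}$ and $n$ even, yields \eqref{eqn:QA1} exactly as in the self-dual triple point case.

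For \eqref{eqn:QA2} and \eqref{eqn:QB}, I apply Liu's relation to $Q$ so that $\cF(R)^*\cF(R)$ appears on the left (since $\overline{Q}=R$). Expanding $R=\frac{-A+(1+r)B+r\jw{n}}{2(1+r)}$ gives nine terms. Extracting the coefficient of $\check{A}$: the diagonal $A$--$A$, $B$--$B$, and $A$--$\jw{n}$ cross terms contribute $\frac{r(\check{r}-1)}{\check{r}}$, $-\frac{r(1+r)}{\check{r}}$ (from $\check{B}^2$), and $r(\sigma_A+\sigma_A^{-1})\frac{(-1)^{n-1}\sqrt{r}}{[n]\sqrt{\check{r}}}$ respectively, while the $B$--$\jw{n}$ and $A$--$B$ cross terms produce only $\check{B}$. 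Combining with $\Tr(Q) = \frac{r[n+1]}{2(1+r)}$ and multiplying through by $\frac{r}{\check{r}}$ (to match the stated form) yields \eqref{eqn:QA2}. For \eqref{eqn:QB}, I extract the coefficient of $\check{B}$ from the same expansion: the $A$--$B$ cross terms contribute via $\check{A}\check{B}=-\check{B}$ giving the term $(\sigma_A\sigma_B^{-1}+\sigma_A^{-1}\sigma_B)$, and the $B$--$\jw{n}$ cross terms, by the same single-diagram argument used for $A$ but now with the eigenvalue factor $\frac{\sqrt{r(1+\check{r})}}{\sqrt{\check{r}(1+r)}}\sigma_B$, give the term $-\frac{(\sigma_B+\sigma_B^{-1})\sqrt{r\check{r}}}{[n]}$ after using $n$ even.

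The main obstacle is purely algebraic bookkeeping: keeping straight the eigenvalue factors $\frac{\sqrt{r}}{\sqrt{\check{r}}}$ and $\frac{\sqrt{r(1+\check{r})}}{\sqrt{\check{r}(1+r)}}$ arising from $\cF(A)$ and $\cF(B)$, especially after squaring and multiplying cross terms, and then rearranging to match the precise form stated in the theorem (the factor $\frac{r}{\check{r}}$ appearing in \eqref{eqn:QA2} and \eqref{eqn:QB} comes from this rearrangement). The genuinely new feature beyond the triple point case is that the rule $\check{A}\check{B}=-\check{B}$ couples the two eigenvalues $\sigma_A$ and $\sigma_B$, which is what produces the cross-chirality equation \eqref{eqn:QB}.
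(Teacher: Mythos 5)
Your proposal follows the paper's own proof exactly: for \eqref{eqn:QA1} apply Liu's relation to $P = \frac{\jw{n}+A}{1+r}$ and observe the argument is word-for-word the $\overline{P}=P$ case of Theorem \ref{thm:TriplePoint}; for \eqref{eqn:QA2} and \eqref{eqn:QB} apply Liu's relation to $Q$, expand $\cF(R)^*\cF(R)$ with $R = \frac{r\jw{n}-A+(1+r)B}{2(1+r)}$, and read off the coefficients of $\check{A}$ and $\check{B}$ using the multiplication rules for $\check{A},\check{B}$ and the rotational eigenvalue normalizations from Proposition \ref{prop:LowWeight}. The observation that $\check{A}\check{B}=-\check{B}$ is what couples $\sigma_A$ and $\sigma_B$ in \eqref{eqn:QB} is also exactly the paper's point.

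One bookkeeping slip to fix: in the expansion of $\cF(R)^*\cF(R)$, the $\jw{n}$--$A$ cross terms carry an overall factor $-r$ coming from the product of $r\jw{n}$ and $-A$ in $R$. Their contribution to the coefficient of $\check{A}$ is therefore $-r(\sigma_A+\sigma_A^{-1})\frac{(-1)^{n-1}}{[n]}\frac{\sqrt{r}}{\sqrt{\check{r}}}$, opposite in sign to what you wrote. With $n$ even this equals $+\frac{r}{[n]}\frac{\sqrt{r}}{\sqrt{\check{r}}}(\sigma_A+\sigma_A^{-1})$, which is the $+\frac{(\sigma_A+\sigma_A^{-1})}{[n]}\sqrt{r\check{r}}$ term of \eqref{eqn:QA2} after multiplying by $\frac{r}{\check{r}}$; as written, your sign would give the wrong sign on that term. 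The $A$--$A$ and $B$--$B$ contributions you list are correct, and the rest of the sketch is sound, so this is a correctable slip rather than a gap in the argument.
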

\begin{proof}
We find the coefficient of $\check{A},\check{B}$ in both sides of Liu's relation, noting that $\check{A},\check{B}$ are orthogonal to $\TL_{n,-}$ and are orthogonal to each other. 

Since $\D P=\frac{\jw{n}+ A}{1+r}$, the proof of Equation \eqref{eqn:QA1} is identical to the proof of Equation \eqref{eqn:EvenObstructionSelfDual} in Theorem \ref{thm:TriplePoint}.

For Equations \eqref{eqn:QA2} and \eqref{eqn:QB}, we note that $\D \overline{Q}=R=\frac{r\jw{n}-A+(1+r)B}{2(1+r)}$, so expanding the left hand side of Liu's relation applied to $Q$ gives 
$$
\cF(R)^*\cF(R)
=
\frac{1}{4(1+r)^2}\cF(r\jw{n}-A+(1+r)B)^*\cF(r\jw{n}-A+(1+r)B).
$$
This time there are 9 terms, 8 of which contribute some multiple of $\check{A}$ or $\check{B}$.
These terms are as follows, where $T_A,T_B\in \TL_{n,-}$, and we omit the leading coefficient of $\D\frac{1}{4(1+r)^2}$, which we will move to the other side of the equation:
\begin{align*}
\begin{tikzpicture}[baseline=-.1cm]
	\draw (0,-1.4)--(0,1.4);
	\node at (0,1.6) {\scriptsize{$n-1$}};
	\node at (0,-1.6) {\scriptsize{$n-1$}};
	\draw (.2,1.1) arc (180:0:.2cm) -- (.6,-1.1) arc (0:-180:.2cm);
	\draw (-.2,.3) arc (0:-180:.2cm) -- (-.6,1.6);
	\draw (-.2,-.3) arc (0:180:.2cm) -- (-.6,-1.6);
	\nbox{unshaded}{(0,-.7)}{0}{0}{A}
	\nbox{unshaded}{(0,.7)}{0}{0}{A}
\end{tikzpicture}
&=
\frac{r}{\check{r}}\check{A}^2 
= 
\left(\frac{r(\check{r}-1)}{\check{r}}\right)\check{A}+ T_A
\displaybreak[1]\\
(1+r)^2\,
\begin{tikzpicture}[baseline=-.1cm]
	\draw (0,-1.4)--(0,1.4);
	\node at (0,1.6) {\scriptsize{$n-1$}};
	\node at (0,-1.6) {\scriptsize{$n-1$}};
	\draw (.2,1.1) arc (180:0:.2cm) -- (.6,-1.1) arc (0:-180:.2cm);
	\draw (-.2,.3) arc (0:-180:.2cm) -- (-.6,1.6);
	\draw (-.2,-.3) arc (0:180:.2cm) -- (-.6,-1.6);
	\nbox{unshaded}{(0,-.7)}{0}{0}{B}
	\nbox{unshaded}{(0,.7)}{0}{0}{B}
\end{tikzpicture}
&=
(1+r)^2\frac{r(1+\check{r})}{\check{r}(1+r)}\check{B}^2 
= 
\left(-\frac{r(1+r)}{\check{r}}\right)\check{A} + T_B
\displaybreak[1]\\
-r\left(
\begin{tikzpicture}[baseline=-.1cm]
	\draw (0,-1.4)--(0,1.4);
	\node at (0,1.6) {\scriptsize{$n-1$}};
	\node at (0,-1.6) {\scriptsize{$n-1$}};
	\draw (.2,1.1) arc (180:0:.2cm) -- (.6,-1.1) arc (0:-180:.2cm);
	\draw (-.2,.3) arc (0:-180:.2cm) -- (-.6,1.6);
	\draw (-.2,-.3) arc (0:180:.2cm) -- (-.6,-1.6);
	\nbox{unshaded}{(0,-.7)}{0}{0}{\jw{n}}
	\nbox{unshaded}{(0,.7)}{0}{0}{A}
\end{tikzpicture}
+
\begin{tikzpicture}[baseline=-.1cm]
	\draw (0,-1.4)--(0,1.4);
	\node at (0,1.6) {\scriptsize{$n-1$}};
	\node at (0,-1.6) {\scriptsize{$n-1$}};
	\draw (.2,1.1) arc (180:0:.2cm) -- (.6,-1.1) arc (0:-180:.2cm);
	\draw (-.2,.3) arc (0:-180:.2cm) -- (-.6,1.6);
	\draw (-.2,-.3) arc (0:180:.2cm) -- (-.6,-1.6);
	\nbox{unshaded}{(0,-.7)}{0}{0}{A}
	\nbox{unshaded}{(0,.7)}{0}{0}{\jw{n}}
\end{tikzpicture}
\right)
&= 
\left(-\frac{r(-1)^{n-1}}{[n]}\frac{\sqrt{r}}{\sqrt{\check{r}}}(\sigma_A+\sigma_A^{-1})\right)\check{A} 
\displaybreak[1]\\
r(1{+}r)\left(
\begin{tikzpicture}[baseline=-.1cm]
	\draw (0,-1.4)--(0,1.4);
	\node at (0,1.6) {\scriptsize{$n-1$}};
	\node at (0,-1.6) {\scriptsize{$n-1$}};
	\draw (.2,1.1) arc (180:0:.2cm) -- (.6,-1.1) arc (0:-180:.2cm);
	\draw (-.2,.3) arc (0:-180:.2cm) -- (-.6,1.6);
	\draw (-.2,-.3) arc (0:180:.2cm) -- (-.6,-1.6);
	\nbox{unshaded}{(0,-.7)}{0}{0}{\jw{n}}
	\nbox{unshaded}{(0,.7)}{0}{0}{B}
\end{tikzpicture}
+
\begin{tikzpicture}[baseline=-.1cm]
	\draw (0,-1.4)--(0,1.4);
	\node at (0,1.6) {\scriptsize{$n-1$}};
	\node at (0,-1.6) {\scriptsize{$n-1$}};
	\draw (.2,1.1) arc (180:0:.2cm) -- (.6,-1.1) arc (0:-180:.2cm);
	\draw (-.2,.3) arc (0:-180:.2cm) -- (-.6,1.6);
	\draw (-.2,-.3) arc (0:180:.2cm) -- (-.6,-1.6);
	\nbox{unshaded}{(0,-.7)}{0}{0}{B}
	\nbox{unshaded}{(0,.7)}{0}{0}{\jw{n}}
\end{tikzpicture}
\right)
&= 
\left(\frac{r(1+r)(-1)^{n-1}}{[n]}\frac{\sqrt{r(1+\check{r})}}{\sqrt{\check{r}(1+r)}}(\sigma_B+\sigma_B^{-1})\right)\check{B}
\displaybreak[1]\\
-(1+r)\left(
\begin{tikzpicture}[baseline=-.1cm]
	\draw (0,-1.4)--(0,1.4);
	\node at (0,1.6) {\scriptsize{$n-1$}};
	\node at (0,-1.6) {\scriptsize{$n-1$}};
	\draw (.2,1.1) arc (180:0:.2cm) -- (.6,-1.1) arc (0:-180:.2cm);
	\draw (-.2,.3) arc (0:-180:.2cm) -- (-.6,1.6);
	\draw (-.2,-.3) arc (0:180:.2cm) -- (-.6,-1.6);
	\nbox{unshaded}{(0,-.7)}{0}{0}{B}
	\nbox{unshaded}{(0,.7)}{0}{0}{A}
\end{tikzpicture}
+
\begin{tikzpicture}[baseline=-.1cm]
	\draw (0,-1.4)--(0,1.4);
	\node at (0,1.6) {\scriptsize{$n-1$}};
	\node at (0,-1.6) {\scriptsize{$n-1$}};
	\draw (.2,1.1) arc (180:0:.2cm) -- (.6,-1.1) arc (0:-180:.2cm);
	\draw (-.2,.3) arc (0:-180:.2cm) -- (-.6,1.6);
	\draw (-.2,-.3) arc (0:180:.2cm) -- (-.6,-1.6);
	\nbox{unshaded}{(0,-.7)}{0}{0}{A}
	\nbox{unshaded}{(0,.7)}{0}{0}{B}
\end{tikzpicture}
\right)
&=
\left(
(1+r)
\frac{\sqrt{r(1+\check{r})}}{\sqrt{\check{r}(1+r)}}
\frac{\sqrt{r}}{\sqrt{\check{r}}}
(\sigma_A\sigma_B^{-1}+\sigma_A^{-1}\sigma_B)
\right)
\check{B}.
\end{align*}
Now collect the coefficients of $\check{A}$ and $\check{B}$ and substitute for $\Tr(Q)$.
\end{proof}

\begin{rems}
\mbox{}
\begin{enumerate}[(1)]
\item
Once again, we need criteria similar to the criterion given in Remark \ref{rem:Calculate} to determine $\cap_{n+1}(\overline{P'})$ and $\cap_{n+1}(\overline{Q'})$.
\item
In specific examples, one can first use Equations \eqref{eqn:QA1} and \eqref{eqn:QA2} to compute $\sigma_A$, and then use Equation \eqref{eqn:QB} to compute $\sigma_B$. We will compute some examples in the next subsection.
\item
Having two equations for $\sigma_A+\sigma_A^{-1}$ should give a strong constraint for translated extensions of $\cQ$.
We will see in Subsection \ref{sec:EliminateQuadruple} that we only need Equation \eqref{eqn:QA1} to eliminate two such weeds.
\end{enumerate}
\end{rems}

\subsection{Checking formulas on group-like subfactors at index 6}

We now check Equations \eqref{eqn:QA1}, \eqref{eqn:QA2}, and \eqref{eqn:QB} on some group-like subfactors at index 6. For each of these examples $n=2$, so we know the chiralities are $\omega_A=1$ and $\omega_B=-1$ by Proposition \ref{prop:LowWeight}.

\begin{exs}\label{exs:Index6}
There are several group-like subfactors at index 6 which are translated extensions of $\cQ$.
Details on computing these principal graphs can be found in \cite{MR1386923,MR1738515,Index6}.

First, we consider group-subgroup subfactors at index 6, which can be classified by subgroups $G$ of $S_6$ which act transitively on $\{1,2,3,4,5,6\}$.
The subgroup $H$ of $G$ is the point stabilizer $\Stab(1)$ of the action.
We consider the following subgroups of $S_6$ which act transitively, where the notation for the groups comes from \cite{SubgroupsOfS6}. 
\begin{itemize}
\item 
$A_4a=\langle (123)(456), (135)(246) \rangle \subset S_6$
yields the principal graphs
$$ 
\left(
\bigraph{bwd1v1p1p1v0x0x2duals1v2x1x3}, 
\bigraph{bwd1v1p1p1v0x1x1v1p1duals1v1x3x2v1x2}
\right).
$$
\item $S_3\times 3a=\langle (123)(456), (14)(26)(35), (142635) \rangle \subset S_6$
yields the principal graphs
$$
\left(
\bigraph{bwd1v1p1p1v0x1x0p0x0x1v1x0p1x0p1x1p0x1p0x1duals1v1x3x2v4x5x3x1x2}, 
\bigraph{bwd1v1p1p1v0x0x1p0x0x1v1x0p1x0p1x0p0x1p0x1p0x1duals1v2x1x3v4x5x6x1x2x3}
\right).
$$
\item $A_4\times 2a=\langle (123)(456), (135)(246), (123456) \rangle \subset S_6$
yields the principal graphs
$$
\left(
\bigraph{bwd1v1p1p1v0x0x1p0x0x1v1x1v1v1p1p1duals1v2x1x3v1v1x3x2}, 
\bigraph{bwd1v1p1p1v0x1x0p0x0x1v1x0p1x0p1x0p0x1p0x1p0x1v0x0x1x0x0x1v1p1duals1v1x3x2v1x2x6x4x5x3v1x2}
\right).
$$
\end{itemize}
We also consider the Bisch-Haagerup subfactors $R^H\subset R\rtimes K$ for $H=\Z/2$ and $K=\Z/3$, where as usual, we let $G=\langle H,K\rangle\subset \Out(R)$ \cite{MR1386923}. We consider the following triples $(G,H,K)$:
\begin{itemize}
\item $H=\langle (12)(34)\rangle$, $K=\langle (123)\rangle$, and $G=A_4$
yields the principal graphs
$$ 
\left(
\bigraph{bwd1v1p1p1v0x1x1v1p1duals1v1x3x2v1x2},
\bigraph{bwd1v1p1p1v0x0x2duals1v2x1x3}
\right).
$$
\item $H=\langle (34)\rangle$, $K=\langle (123)\rangle$, and $G=S_4$
yields the principal graphs
$$
\left(
\bigraph{bwd1v1p1p1v0x1x0p0x0x1v1x0p0x1p1x1v1x1x0v1p1duals1v1x3x2v1x2x3v1x2},
\bigraph{bwd1v1p1p1v0x0x1p0x0x1v1x1v1v1p1p1duals1v2x1x3v1v1x2x3}
\right).
$$
\item $H=\langle (15)(24)\rangle$, $K=\langle (123)\rangle$, and $G=A_5$
yields the principal graphs
$$
\left(
\bigraph{bwd1v1p1p1v0x1x0p0x0x1v1x0p1x0p0x1p0x1v0x1x0x0p1x0x0x1p0x0x1x0v1x0x0p0x1x0p1x0x1p0x0x1v0x1x0x0p1x0x0x0p0x0x0x1v1x1x0p0x1x0p1x0x1p0x0x1v0x1x0x1v1p1duals1v1x3x2v4x2x3x1v4x3x2x1v1x2x3x4v1x2}
,
\bigraph{bwd1v1p1p1v0x0x1p0x0x1v1x0p0x1v1x0p1x1p0x1v1x0x0p0x0x1v1x0p1x1p0x1v1x0x1v1v1p1p1duals1v2x1x3v2x1v1x2v1v1x2x3}
\right).
$$
\end{itemize}
\end{exs}

In fact, just one calculation suffices to determine the chiralities for Examples \ref{exs:Index6} via our equations. If necessary, we pass to the dual subfactor so we may always assume $P$ is the univalent self-dual vertex at depth 2, i.e., the principal graphs are
$$
\left(
\bigraph{bwd1v1p1p1v0x1x1v1p1duals1v1x3x2v1x2},
\bigraph{bwd1v1p1p1v0x0x2duals1v2x1x3}
\right)
$$
or they begin like
$$
\left(
\bigraph{bwd1v1p1p1v0x1x0p0x0x1duals1v1x3x2}, 
\bigraph{bwd1v1p1p1v0x0x1p0x0x1duals1v2x1x3}
\right).
$$
For these cases, $P'=0$ and $\D\cap_{n+1}(\overline{Q'})=\frac{\Tr(Q')}{\Tr(\check{P})}\check{P}$.
We have the following traces:
\begin{align*}
\Tr(P) &= 1
&
\Tr(Q) &= \Tr(R) = 2
\\
\Tr(\check{P}) &= 3
&
\Tr(\check{Q}) &=\Tr(\check{R}) = 1
\\
&
&
\Tr(Q') &= \Tr(R') = \sqrt{6}.
\end{align*}  
Thus $r=4$ and $\check{r}=\frac{2}{3}$, and Equations \eqref{eqn:QA1}, \eqref{eqn:QA2}, and \eqref{eqn:QB} are given respectively by
\begin{align*}
&\left(\frac{2}{3}-1\right)\frac{12}{2} 
- 
\frac{(\sigma_A+\sigma_A^{-1})}{\sqrt{6}}\frac{2\sqrt{3}}{\sqrt{2}}
=
0,
\\
&\left(
\left(\frac{2}{3}-4-2\right)
+
\frac{(\sigma_A+\sigma_A^{-1})}{\sqrt{6}}2\frac{\sqrt{2}}{\sqrt{3}}
\right)
\frac{12}{2}
=
-2(4)(5) \frac{5}{\sqrt{6}} \left(\frac{\sqrt{6}}{3} \right)\left( \frac{1}{1+\frac{2}{3}}
\right), \text{ and}
\\
&\left(
(\sigma_A\sigma_B^{-1}+\sigma_A^{-1}\sigma_B)
-
\frac{(\sigma_B+\sigma_B^{-1})}{\sqrt{6}}2\frac{\sqrt{2}}{\sqrt{3}}
\right)
\left(\frac{12}{2}\right)\sqrt{1+\frac{2}{3}}\sqrt{5}
=
0.
\end{align*}
Hence $\sigma_A+\sigma_A^{-1}=-2$, so $\sigma_A=-1$ and $\omega_A=1$, and $\sigma_B+\sigma_B^{-1}=0$, so $\sigma_B=\pm i$ and $\omega_B=-1$.

\subsection{Quadruple point obstructions}\label{sec:EliminateQuadruple}

We now prove a result similar to Theorem \ref{thm:Ocneanu} for even translated extensions of $\cQ$.

\begin{thm}\label{thm:OcneanuQuadruple}
Suppose that $(\Gamma_+,\Gamma_-)$ is a translated extension of $\cQ$ such that
\begin{itemize}
\item 
$r=\check{r}$, and for every vertex $R$ connected to  $P$, $\overline{R}$ only connects to $\check{P}$.
\end{itemize}
Then $\sigma_A+\sigma_A^{-1}=[n+2]-[n]=q^{n+1}+q^{-n-1}$, and thus $[2]\leq 2$.
\end{thm}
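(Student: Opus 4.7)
The plan is to mirror the proof of Theorem \ref{thm:Ocneanu}: apply one of the chirality formulas from Theorem \ref{thm:QuadruplePoint} and show its right-hand side collapses under the hypotheses. The natural choice is Equation \eqref{eqn:QA1}, since it is the quadruple-point analogue of Equation \eqref{eqn:EvenObstructionSelfDual} and involves only $\sigma_A$ together with the coefficient of $\check{A}$ in $\cap_{n+1}(\overline{P'})$.

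The first step is to evaluate $\cap_{n+1}(\overline{P'})$. The hypothesis that every $R$ at depth $n+1$ connected to $P$ has $\overline{R}$ attached only to $\check{P}$ means $E(\overline{R}) = \check{P}$ for all such $R$, so the quadruple-point analogue of Remark \ref{rem:Calculate} gives
\[
\cap_{n+1}(\overline{P'}) = \frac{\Tr(P')}{\Tr(\check{P})}\,\check{P} + (\text{a multiple of }\jw{n}).
\]
From the conventions of Section \ref{sec:Multiplicity2}, $\check{P} = (\check{A}+\jw{n})/(1+\check{r})$, so the coefficient of $\check{A}$ in $\cap_{n+1}(\overline{P'})$ is $\Tr(P')/\bigl((1+\check{r})\Tr(\check{P})\bigr)$.

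Next, I substitute the standard trace formulas $\Tr(P)=[n+1]/(1+r)$, $\Tr(\check P)=[n+1]/(1+\check{r})$, and $\Tr(P')=[2]\Tr(P)-[n]=([n+2]-r[n])/(1+r)$ (from Lemma \ref{lem:GeneralizedWenzl}) into Equation \eqref{eqn:QA1}, together with $r=\check{r}$. The factors of $(1+r)$ and $[n+1]$ cancel exactly as in the first case of the proof of Theorem \ref{thm:Ocneanu}, giving
\[
(r-1) - \frac{\sigma_A+\sigma_A^{-1}}{[n]} = \frac{r[n]-[n+2]}{[n]},
\]
which rearranges to $\sigma_A+\sigma_A^{-1} = [n+2]-[n]$.

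For the inequality $[2]\le 2$, the identity $[n+2]-[n]=q^{n+1}+q^{-n-1}$ follows directly from $[k]=(q^k-q^{-k})/(q-q^{-1})$. By Proposition \ref{prop:LowWeight}, $\omega_A=\sigma_A^2$ is an $(n/2)$-th root of unity, so $\sigma_A$ lies on the unit circle and $|\sigma_A+\sigma_A^{-1}|\le 2$. Hence $q^{n+1}+q^{-n-1}\in[-2,2]$, which for $q\in\{\exp(2\pi i/2j):j\ge 3\}\cup[1,\infty)$ forces $|q|=1$ and therefore $[2]=q+q^{-1}\le 2$. The only real subtlety is the bookkeeping in extracting the $\check{A}$-coefficient correctly, and that is handled by the explicit formula for $\check{P}$ in terms of $\{\check{A},\check{B},\jw{n}\}$.
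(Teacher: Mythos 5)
Your proposal is correct and follows the same route as the paper: it applies Equation \eqref{eqn:QA1} (the analogue of Equation \eqref{eqn:EvenObstructionSelfDual}), evaluates $\cap_{n+1}(\overline{P'})$ as a multiple of $\check{P}$ using the hypothesis, and repeats the simplification from the $n$-even, $\overline{P}=P$ case of Theorem~\ref{thm:Ocneanu} to obtain $\sigma_A+\sigma_A^{-1}=[n+2]-[n]$ and hence $[2]\le 2$. The only cosmetic difference is that you spell out the Corollary~\ref{cor:Ocneanu} step with an explicit appeal to Proposition~\ref{prop:LowWeight}, whereas the paper simply cites Corollary~\ref{cor:Ocneanu}.
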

\begin{proof}
Just as for Theorem \ref{thm:Ocneanu}, we have
$$
\Tr(P')=[2]\Tr(P)-[n]=[2]\left(\frac{[n+1]}{1+r}\right)-[n]=\frac{[n+2]-r[n]}{1+r}.
$$
Since Equation \eqref{eqn:QA1} is identical to Equation \eqref{eqn:EvenObstructionSelfDual}, the same proof for the case $n$ even with $\overline{P}=P$ for Theorem \ref{thm:Ocneanu} shows $\sigma_A+\sigma_A^{-1}=[n+2]-[n]$, and thus $[2]\leq 2$ as in Corollary \ref{cor:Ocneanu}.
\end{proof}

\begin{defn}
We define the following weeds with initial quadruple points which occur when running the principal graph odometer \cite{MR2914056} above index $5$:
\begin{align*}
\cQ_1&=
\left(
\begin{tikzpicture}[baseline=-.1cm]
	\draw[fill] (0,0) circle (0.05);
	\draw (0.,0.) -- (1.,0.);
	\draw[fill] (1.,0.) circle (0.05);
	\draw (1.,0.) -- (2.,-0.5);
	\draw (1.,0.) -- (2.,0.);
	\draw (1.,0.) -- (2.,0.5);
	\draw[fill] (2.,0.) circle (0.05);
	\draw[fill] (2.,0.5) circle (0.05);
	\draw[red, thick] (0.,0.) -- +(0,0.166667) ;
	\draw[red, thick] (2.,-0.5) -- +(0,0.166667) ;
	\draw[red, thick] (2.,0.) to[out=135,in=-135] (2.,0.5);
	\filldraw[unshaded] (2.,-0.5) circle (0.05);
\end{tikzpicture}
,
\begin{tikzpicture}[baseline=-.1cm]
	\draw[fill] (0,0) circle (0.05);
	\draw (0.,0.) -- (1.,0.);
	\draw[fill] (1.,0.) circle (0.05);
	\draw (1.,0.) -- (2.,-0.5);
	\draw (1.,0.) -- (2.,0.);
	\draw (1.,0.) -- (2.,0.5);
	\draw[fill] (2.,0.) circle (0.05);
	\draw[fill] (2.,0.5) circle (0.05);
	\draw[red, thick] (0.,0.) -- +(0,0.166667) ;
	\draw[red, thick] (2.,-0.5) -- +(0,0.166667) ;
	\draw[red, thick] (2.,0.) to[out=135,in=-135] (2.,0.5);
	\filldraw[unshaded] (2.,-0.5) circle (0.05);
\end{tikzpicture}
\right)
\\
\cQ_2 &=
\left(
\begin{tikzpicture}[baseline=-.1cm]
	\draw[fill] (0,0) circle (0.05);
	\draw (0.,0.) -- (1.,0.);
	\draw[fill] (1.,0.) circle (0.05);
	\draw (1.,0.) -- (2.,-0.5);
	\draw (1.,0.) -- (2.,0.);
	\draw (1.,0.) -- (2.,0.5);
	\draw[fill] (2.,-0.5) circle (0.05);
	\draw[red, thick] (0.,0.) -- +(0,0.166667) ;
	\draw[red, thick] (2.,-0.5) -- +(0,0.166667) ;
	\draw[red, thick] (2.,0.) to[out=135,in=-135] (2.,0.5);
	\filldraw[unshaded] (2.,0.5) circle (0.05);
	\filldraw[unshaded] (2.,0) circle (0.05);
\end{tikzpicture}
,
\begin{tikzpicture}[baseline=-.1cm]
	\draw[fill] (0,0) circle (0.05);
	\draw (0.,0.) -- (1.,0.);
	\draw[fill] (1.,0.) circle (0.05);
	\draw (1.,0.) -- (2.,-0.5);
	\draw (1.,0.) -- (2.,0.);
	\draw (1.,0.) -- (2.,0.5);
	\draw[fill] (2.,-0.5) circle (0.05);
	\draw[red, thick] (0.,0.) -- +(0,0.166667) ;
	\draw[red, thick] (2.,-0.5) -- +(0,0.166667) ;
	\draw[red, thick] (2.,0.) to[out=135,in=-135] (2.,0.5);
	\filldraw[unshaded] (2.,0.5) circle (0.05);
	\filldraw[unshaded] (2.,0) circle (0.05);
\end{tikzpicture}
\right),
\end{align*}
where we use the convention that if a weed has shaded and unshaded vertices, then only the unshaded vertices may connect to a vertex at depth $n+1$.
\end{defn}

\begin{cor}\label{cor:EliminateQ}
Any subfactor whose principal graphs are an even translated extension of $\cQ_1$ or $\cQ_2$ is the $\Z/4$ group subfactor with principal graphs
$$
\left(
\bigraph{bwd1v1p1p1duals1v1x3x2},
\bigraph{bwd1v1p1p1duals1v1x3x2}
\right).
$$
\end{cor}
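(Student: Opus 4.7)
The plan is to verify that both weeds $\cQ_1$ and $\cQ_2$ satisfy the hypotheses of Theorem \ref{thm:OcneanuQuadruple}, so that in each case we may conclude $[2]\le 2$. Since these weeds are produced by the principal graph odometer only above index $5$, that bound can only be met by the trivial extension, i.e.\ the graph pair $\cQ$ itself, which is the $\Z/4$ group subfactor.

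For $\cQ_1$, the shading convention allows only the self-dual vertex $P$ at depth $n$ to have an extension at depth $n+1$. Since both $\Gamma_+$ and $\Gamma_-$ are given by the same bipartite graph with the same duality data, the Perron--Frobenius eigenvectors on the two sides agree, and hence $r=\check{r}$. The duality between $\Gamma_+$ and $\Gamma_-$ at odd depth $n+1$ then guarantees that for any vertex $R'$ at depth $n+1$ connected to $P$ in $\Gamma_+$, the dual vertex $\overline{R'}$ in $\Gamma_-$ is connected only to $\check{P}$. Both hypotheses of Theorem \ref{thm:OcneanuQuadruple} are therefore met, and we deduce $[2]\le 2$.

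For $\cQ_2$ the roles are swapped: the self-dual vertex $P$ is shaded, so no vertex at depth $n+1$ connects to $P$, while all extensions occur at $Q$ and $R$. The condition ``for every vertex $R'$ connected to $P$, $\overline{R'}$ only connects to $\check{P}$'' is then vacuous, and the symmetry of the weed still yields $r=\check{r}$. Thus Theorem \ref{thm:OcneanuQuadruple} applies; consistency with $P'=0$ is automatic since it is captured by the forced identity $\Tr(P')=[2]\Tr(P)-[n]=0$, i.e.\ $r=[n+2]/[n]$. Once more we obtain $[2]\le 2$.

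In either case the bound $[2]\le 2$ forces index at most $4$. However $\cQ=D_4^{(1)}$ already has Perron--Frobenius norm exactly $2$, and any nontrivial translation or extension strictly increases this norm. Since each weed is assumed to arise above index $5$, the only surviving graph pair is $\cQ$ itself, and the index $4$ classification identifies the subfactor as the $\Z/4$ group subfactor. The main technical obstacle is to verify carefully that the shading convention together with the odd-depth duality indeed forces the extension pattern to satisfy the hypotheses of Theorem \ref{thm:OcneanuQuadruple}; once this combinatorial bookkeeping is in place, the conclusion follows immediately from the theorem already established.
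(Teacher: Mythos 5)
Your proposal takes the same route as the paper: check that $\cQ_1$ and $\cQ_2$ satisfy the hypotheses of Theorem~\ref{thm:OcneanuQuadruple}, deduce $[2]\le 2$, and observe that since $\cQ$ already has graph norm exactly $2$ and is the only affine Dynkin diagram with a degree-$4$ vertex, no nontrivial translation or extension survives. Two small inaccuracies in the justification are worth flagging. First, the claim that $r=\check{r}$ because ``the Perron--Frobenius eigenvectors on the two sides agree'' presupposes $\Gamma_+=\Gamma_-$, which is not guaranteed for an arbitrary translated extension of the weed. The correct reason is more local: the shading forces the shaded depth-$n$ vertices to have no edges to depth $n+1$, so their traces are $[n]/[2]$, and then the remaining trace at depth $n$ is pinned down by $\Tr(\jw{n})=[n+1]$; this determines $r$ and $\check r$ as identical functions of $q$ and $n$. (Your phrase ``symmetry of the weed'' for $\cQ_2$ has the same gap.) Second, the aside ``Since each weed is assumed to arise above index 5'' should be dropped; the corollary is not conditioned on any index bound, and read literally that sentence would contradict your own conclusion that the surviving graph has index $4$. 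The substance of the argument, including the observation that the hypothesis of Theorem~\ref{thm:OcneanuQuadruple} is vacuously satisfied for $\cQ_2$ with $P'=0$ and that the theorem's proof still goes through in that degenerate case, is correct and matches the paper's (much terser) proof.
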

\begin{proof}
Note that $\cQ_1,\cQ_2$ both satisfy the conditions of Theorem \ref{thm:OcneanuQuadruple}.
By Theorem \ref{thm:OcneanuQuadruple}, for both $\cQ_1,\cQ_2$, $[2]\leq 2$, and any translated extension of $\cQ_1,\cQ_2$ must be the trivial translated extension.
\end{proof}

\begin{rem}
In \cite{MR2914056} Morrison-Snyder showed that the principal graph pair of any non Temperley-Lieb subfactor planar algebra in the index range $(4,5)$ was either a translated extension of one of 5 weeds or a translation of one of 39 vines, up to taking duals. One of their weeds was given by
$$
\cQ_3=\left(\bigraph{bwd1v1v1v1p1p1v1x0x0duals1v1v1x3x2},\bigraph{bwd1v1v1v1p1p1v1x0x0duals1v1v1x3x2}\right),
$$
which is a translated extension of the weed $\cQ_1$. 
(In \cite{MR2914056}, $\cQ_3$ is denoted by $\cQ$, but we have already used this symbol.)
Hence Corollary \ref{cor:EliminateQ} gives another proof that there is no subfactor whose principal graphs are a translated extension of $\cQ_3$.
\end{rem}

\subsection{The other quadruple point when $n$ is even}

For formulas such as those in Theorem \ref{thm:QuadruplePoint} to hold, we did not need to assume that $(\Gamma_+,\Gamma_-)$ was a translated extension of $\cQ$; rather, we needed to know the formulas of the low weight rotational eigenvectors at depth $n$. 
Knowing the dual data ensured that we could determine the new low weight rotational eigenvectors in Proposition \ref{prop:LowWeight}.
If we assumed the same formulas for the low weight rotational eigenvectors for translated extensions of
$$
\left(\bigraph{bwd1v1p1p1duals1v1x2x3},\bigraph{bwd1v1p1p1duals1v1x2x3}\right),
$$
along with $\cF(A)=\sigma_A\lambda_A \check{A}$ and $\cF(B)=\sigma_B\lambda_B \check{B}$ for some real scalars $\lambda_A,\lambda_B$, we would have obtained the same formulas.

Since we cannot verify these assumptions in this case, we cannot use these formulas to prove obstructions to principal graphs. However, we can prove that the low weight rotational eigenvectors are not given by these formulas, as in the following proposition.

\begin{prop}\label{prop:NotBothEigenvectors}
Suppose $(\Gamma_+,\Gamma_-)$ is a translated extension of
$$
\cQ_4=
\left(
\bigraph{bwd1v1p1p1v1x0x0duals1v1x2x3},
\bigraph{bwd1v1p1p1v1x0x0duals1v1x2x3}
\right).
$$
Let $P,Q,R$ and $\check{P},\check{Q},\check{R}$ be the minimal projections at depth 4 from bottom to top.
At least one of $Q-R$ or $\check{Q}-\check{R}$ is \underline{not} a low weight rotational eigenvector.
\end{prop}
\begin{proof}
Suppose that $B=Q-R$ and $\check{Q}-\check{R}$ are both rotational eigenvectors.
Then so are $A=rP-(Q+R)$ and $\check{r}\check{P}-(\check{Q}+\check{R})$, since both are orthogonal to Temperley-Lieb and to $Q-R$, $\check{Q}-\check{R}$ respectively. Since $\cQ_4$ has annular multiplicities $*20$, $\omega_A\neq \omega_B$ by \cite[Theorem 5.2.3]{MR2972458}. Hence $\cF(A)=\sigma_A\lambda_A \check{C}$ and $\cF(B)=\sigma_B \lambda_B \check{D}$ where $\check{C},\check{D}$ are distinct elements in the set $\{\check{r}\check{P}-(\check{Q}+\check{R}),\check{Q}-\check{R}\}$. (We avoid using $\check{A},\check{B}$ to avoid confusion.) There are two cases to consider. 

Suppose that $\check{C}=\check{r}\check{P}-(\check{Q}+\check{R})$ and $\check{D}=\check{Q}-\check{R}$. Using $r=\check{r}$, we have
$$
\cF(A)=\sigma_A\left(\frac{\sqrt{r}}{\sqrt{\check{r}}}\right)\check{C}=\sigma_A\check{C}
\text{ and }
\cF(B)=\sigma_B\left(\frac{\sqrt{r(1+\check{r})}}{\sqrt{\check{r}(1+r)}}\right) \check{D}=\sigma_B\check{D}.
$$
Hence we may write $\check{A}=\check{C}$ and $\check{B}=\check{D}$ without confusion. Replacing $\cQ$ with $\cQ_4$, the proofs of Equation \eqref{eqn:QA1}, Theorem \ref{thm:OcneanuQuadruple}, and Corollary \ref{cor:EliminateQ} all hold mutatis mutandis. 
We conclude that $[2]\leq 2$. But $2<\|\Gamma_\pm\|\leq [2]$, a contradiction.

The second case is a bit trickier. Suppose that $\check{C}=\check{Q}-\check{R}$ and $\check{D}=\check{r}\check{P}-(\check{Q}+\check{R})$, where using $r=\check{r}$, we have 
\begin{align*}
\cF(A)&=\sigma_A\left(\frac{\sqrt{r(1+\check{r})}}{\sqrt{\check{r}}}\right) \check{C}=\sigma_A(\sqrt{1+r})\check{C}
\text{ and }\\
\cF(B)&=\sigma_B\left(\frac{\sqrt{r}}{\sqrt{\check{r}(1+r)}}\right) \check{D}=\frac{\sigma_B}{\sqrt{1+r}}\check{D}.
\end{align*}
As in Theorem \ref{thm:OcneanuQuadruple},
$$
\cap_{n+1}(\overline{P'})=\D\frac{\Tr(P')}{\Tr(P)}\check{P}=\left(\frac{[n+2]-r[n]}{1+r}\cdot\frac{(1+r)}{[n+1]}\right)\left(\frac{\jw{n}+\check{D}}{1+r}\right).
$$
Liu's relation applied to $P$ as in Theorem \ref{thm:QuadruplePoint} has the following left hand side after multiplying by $(1+r)^2$:
\begin{align*}
\text{LHS} &= \cF(A+\jw{n})^*\cF(A+\jw{n})\\
&=(1+r)\check{C}^2-\sqrt{1+r}\frac{(\sigma_A+\sigma_A^{-1})}{[n]}\check{C}+T\\
&=-\check{D}-\sqrt{1+r}\frac{(\sigma_A+\sigma_A^{-1})}{[n]}\check{C}+T'
\end{align*}
for some $T,T'\in \TL_{n,-}$.
The right hand side, multiplying by $(1+r)^2$ and ignoring terms in Temperley-Lieb, is given by
$$
\text{RHS}= 
-(1+r)\frac{[n+1]}{[n]}\left(\frac{[n+2]-r[n]}{1+r}\cdot\frac{(1+r)}{[n+1]}\right)\left(\frac{\check{D}}{1+r}\right)
=\frac{[n+2]-r[n]}{[n]}\check{D}.
$$
Since $\check{C},\check{D}$ are orthogonal to Temperley-Lieb and to each other, we must have $\sigma_A+\sigma_A^{-1}=0$, and the coefficients of $\check{D}$ must agree, which means
$$
-1=\frac{[n+2]-r[n]}{[n]}\Longleftrightarrow r=\frac{[n+2]+[n]}{[n]}.
$$
But $\D\Tr(Q)=\Tr(R)=\frac{[n]}{[2]}$ implies $\D\dim(P)=\frac{[n+2]-[n]}{[2]}$ and $\D r=\frac{2[n]}{[n+2]-[n]}$. Hence
$$
\frac{2[n]}{[n+2]-[n]} = r = \frac{[n+2]+[n]}{[n]} \Longleftrightarrow 3[n]^2=[n+2]^2.
$$
It is easy to check that this is impossible when $q>1$.
\end{proof}

\begin{rem}\label{rem:Hypothesis}
By the symmetry of the graphs in $\cQ_4$, it is natural to hypothesize that both $Q-R$ and $\check{Q}-\check{R}$ are rotational eigenvectors, although there is no particular reason this should be the case.
\end{rem}

\begin{cor}\label{cor:GHJ}
For the GHJ 3311 subfactor \cite{MR999799} with principal graphs
$$
\left(
\bigraph{bwd1v1v1v1p1p1v1x0x0v1duals1v1v1x2x3v1},
\bigraph{bwd1v1v1v1p1p1v1x0x0v1duals1v1v1x2x3v1}
\right),
$$ 
at least one of $Q-R$ or $\check{Q}-\check{R}$ is not a low weight rotational eigenvector.
\end{cor}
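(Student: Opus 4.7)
The plan is to observe that Corollary \ref{cor:GHJ} is an immediate consequence of Proposition \ref{prop:NotBothEigenvectors} once we recognize that the GHJ $3311$ principal graphs are an even translated extension of $\cQ_4$. Concretely, both graphs of the GHJ $3311$ subfactor begin with a Temperley-Lieb stretch of length two (so the initial supertransitivity is one larger than that of $\cQ_4$), then exhibit a quadruple point at depth $n=4$ with the same branching pattern as $\cQ_4$, with one vertex at depth $5$ attached to the bottommost of the three projections at depth $4$. The duality data specified on both graphs matches the duality data prescribed by $\cQ_4$, namely the bottom vertex at depth $n$ is self-dual and the top two are dual to each other on both the principal and dual principal graph.

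First I would verify the identification as a translated extension carefully: read off the depths, the branching structure at depth $4$, the even-depth duality arcs, and the fact that exactly one vertex appears at depth $5$ attached to one of the three depth-$4$ projections. This is a mechanical check from the graph diagrams and the conventions in Definition \ref{defn:translation}.

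Having made the identification, I would then simply invoke Proposition \ref{prop:NotBothEigenvectors}, which says that for \emph{any} subfactor planar algebra whose principal graphs are a translated extension of $\cQ_4$, it is impossible for both $Q-R$ and $\check Q - \check R$ to simultaneously be low weight rotational eigenvectors. This yields the conclusion of Corollary \ref{cor:GHJ} without further computation.

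The only potential obstacle is purely bookkeeping: making sure the GHJ $3311$ graphs really are a translated extension of $\cQ_4$ as opposed to, say, a translated extension of the sibling graph with different duality (where the bottom two depth-$n$ vertices are swapped in duality), in which case Proposition \ref{prop:NotBothEigenvectors} would not apply directly. Since the stated dual data on the GHJ $3311$ graphs indeed has the self-dual vertex on the outside (the single leg) and the other two paired, this matches $\cQ_4$ and no additional work is required.
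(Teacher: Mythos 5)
Your proposal is exactly the intended proof: the paper presents Corollary~\ref{cor:GHJ} as an immediate consequence of Proposition~\ref{prop:NotBothEigenvectors} once the GHJ $3311$ principal graphs are recognized as a translated extension of $\cQ_4$, and that is what you do. One small bookkeeping slip: you write that the supertransitivity is ``one larger'' than that of $\cQ_4$, but $\cQ_4$ is $1$-supertransitive (branch at depth~$1$, so $n=2$) while GHJ $3311$ is $3$-supertransitive (branch at depth~$3$, so $n=4$), so the translation is by two, not one. This actually matters for the hypothesis you are invoking, since Definition~\ref{defn:translation} permits only \emph{even} translations when the even dual data is specified, as it is for $\cQ_4$; a translation by one would not qualify. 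Since the true translation is by two, the identification goes through and your invocation of Proposition~\ref{prop:NotBothEigenvectors} is valid. Your check of the duality data (self-dual bottom vertex, top two dual to each other) is exactly the right thing to verify, distinguishing $\cQ_4$ from the sibling weed $\cQ$.
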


\begin{rems}
\mbox{}
\be
\item
In fact, the GHJ 3311 subfactor is the only subfactor whose principal graphs are a translated extension of $\cQ_4$ \cite{MR2993924}.
\item
Corollary \ref{cor:GHJ} also follows from \cite[Theorem 5.10]{1208.3637}, where surprisingly $\check{Q}-\check{R}$ is a rotational eigenvector! 
However, it is convenient to disprove the hypothesis in Remark \ref{rem:Hypothesis} without having to do the substantial work of constructing the subfactor planar algebra.
\ee
\end{rems}

\bibliographystyle{amsalpha}
\bibliography{../bibliography}

\providecommand{\bysame}{\leavevmode\hbox to3em{\hrulefill}\thinspace}
\providecommand{\MR}{\relax\ifhmode\unskip\space\fi MR }
\providecommand{\MRhref}[2]{%
  \href{http://www.ams.org/mathscinet-getitem?mr=#1}{#2}
}
\providecommand{\href}[2]{#2}
\begin{thebibliography}{GdlHJ89}

\bibitem[AH99]{MR1686551}
Marta Asaeda and Uffe Haagerup, \emph{Exotic subfactors of finite depth with
  {J}ones indices {$(5+\sqrt{13})/2$} and {$(5+\sqrt{17})/2$}}, Comm. Math.
  Phys. \textbf{202} (1999), no.~1, 1--63, \mathscinet{MR1686551},
  \doi{10.1007/s002200050574}, \arXiv{math.OA/9803044}.

\bibitem[BH96]{MR1386923}
Dietmar Bisch and Uffe Haagerup, \emph{Composition of subfactors: new examples
  of infinite depth subfactors}, Ann. Sci. \'Ecole Norm. Sup. (4) \textbf{29}
  (1996), no.~3, 329--383, \mathscinet{MR1386923}.

\bibitem[BJ97]{MR1437496}
Dietmar Bisch and Vaughan F.~R. Jones, \emph{Algebras associated to
  intermediate subfactors}, Invent. Math. \textbf{128} (1997), no.~1, 89--157,
  \mathscinet{MR1437496}.

\bibitem[BMPS12]{MR2979509}
Stephen Bigelow, Scott Morrison, Emily Peters, and Noah Snyder,
  \emph{Constructing the extended {H}aagerup planar algebra}, Acta Math.
  \textbf{209} (2012), no.~1, 29--82, \mathscinet{MR2979509},
  \arXiv{0909.4099}, \doi{10.1007/s11511-012-0081-7}. \MR{2979509}

\bibitem[BP13]{1208.1564}
Stephen Bigelow and David Penneys, \emph{Principal graph stability and the
  jellyfish algorithm}, Math. Ann. (2013), 24 pages, \arxiv{1208.1564},
  \doi{10.1007/s00208-013-0941-2}.

\bibitem[GdlHJ89]{MR999799}
Frederick~M. Goodman, Pierre de~la Harpe, and Vaughan~F.R. Jones, \emph{Coxeter
  graphs and towers of algebras}, Mathematical Sciences Research Institute
  Publications, 14. Springer-Verlag, New York, 1989, x+288 pp. ISBN:
  0-387-96979-9, \mathscinet{MR999799}.

\bibitem[Haa94]{MR1317352}
Uffe Haagerup, \emph{Principal graphs of subfactors in the index range
  {$4<[M:N]<3+\sqrt2$}}, Subfactors ({K}yuzeso, 1993), World Sci. Publ., River
  Edge, NJ, 1994, \mathscinet{MR1317352} available at
  \url{http://tqft.net/other-papers/subfactors/haagerup.pdf}, pp.~1--38.

\bibitem[IJMS12]{MR2993924}
Masaki Izumi, Vaughan F.~R. Jones, Scott Morrison, and Noah Snyder,
  \emph{Subfactors of index less than 5, {P}art 3: {Q}uadruple points}, Comm.
  Math. Phys. \textbf{316} (2012), no.~2, 531--554, \mathscinet{MR2993924},
  \arxiv{1109.3190}, \doi{10.1007/s00220-012-1472-5}. \MR{2993924}

\bibitem[JMS13]{bulletin}
Vaughan F.~R. Jones, Scott Morrison, and Noah Snyder, \emph{The classification
  of subfactors of index at most 5}, 2013, \arXiv{1304.6141}.

\bibitem[Jon83]{MR696688}
Vaughan F.~R. Jones, \emph{Index for subfactors}, Invent. Math. \textbf{72}
  (1983), no.~1, 1--25, \mathscinet{MR696688}, \doi{10.1007/BF01389127}.

\bibitem[Jon99]{math/9909027}
\bysame, \emph{Planar algebras {I}}, 1999, \arXiv{math/9909027}.

\bibitem[Jon01]{MR1929335}
\bysame, \emph{The annular structure of subfactors}, Essays on geometry and
  related topics, {V}ol. 1, 2, Monogr. Enseign. Math., vol.~38, Enseignement
  Math., Geneva, 2001, \mathscinet{MR1929335}, pp.~401--463.

\bibitem[Jon12]{MR2972458}
\bysame, \emph{Quadratic tangles in planar algebras}, Duke Math. J.
  \textbf{161} (2012), no.~12, 2257--2295, \mathscinet{MR2972458},
  \arxiv{1007.1158}, \doi{10.1215/00127094-1723608}. \MR{2972458}

\bibitem[KS00]{MR1738515}
Vijay Kodiyalam and V.~S. Sunder, \emph{The subgroup-subfactor}, Math. Scand.
  \textbf{86} (2000), no.~1, 45--74, \mathscinet{MR1738515}, available (gratis)
  at \url{http://www.mscand.dk/article.php?id=128}. \MR{1738515 (2001b:46103)}

\bibitem[Kup96]{MR1403861}
Greg Kuperberg, \emph{Spiders for rank {$2$} {L}ie algebras}, Comm. Math. Phys.
  \textbf{180} (1996), no.~1, 109--151, \mathscinet{MR1403861},
  \arXiv{q-alg/9712003} \euclid{euclid.cmp/1104287237}. \MR{MR1403861
  (97f:17005)}

\bibitem[Mor]{morrison}
Scott Morrison, \emph{A formula for the {J}ones-{W}enzl projections},
  Unpublished, available at
  \url{http://tqft.net/math/JonesWenzlProjections.pdf}.

\bibitem[MP13]{1208.3637}
Scott Morrison and David Penneys, \emph{Constructing spoke subfactors using the
  jellyfish algorithm}, 2013, \arxiv{1208.3637}, Accepted to Trans. Amer. Math.
  Soc. Jan. 18.

\bibitem[MPPS12]{MR2902285}
Scott Morrison, David Penneys, Emily Peters, and Noah Snyder, \emph{Subfactors
  of index less than 5, {P}art 2: {T}riple points}, Internat. J. Math.
  \textbf{23} (2012), no.~3, 1250016, 33, \mathscinet{MR2902285},
  \arXiv{1007.2240}, \doi{10.1142/S0129167X11007586}. \MR{2902285}

\bibitem[MPPS13]{Index6}
Scott Morrison, David Penneys, Emily Peters, and Noah Snyder, \emph{Some
  group-like subfactors at index 6}, 2013, In preparation.

\bibitem[MS12a]{MR2922607}
Scott Morrison and Noah Snyder, \emph{Non-cyclotomic fusion categories}, Trans.
  Amer. Math. Soc. \textbf{364} (2012), no.~9, 4713--4733, \arxiv{1002.0168},
  \mathscinet{MR2922607}, \doi{10.1090/S0002-9947-2012-05498-5}.

\bibitem[MS12b]{MR2914056}
Scott Morrison and Noah Snyder, \emph{Subfactors of index less than 5, {P}art
  1: {T}he principal graph odometer}, Comm. Math. Phys. \textbf{312} (2012),
  no.~1, 1--35, \mathscinet{MR2914056}, \arxiv{1007.1730},
  \doi{10.1007/s00220-012-1426-y}. \MR{2914056}

\bibitem[Ocn88]{MR996454}
Adrian Ocneanu, \emph{Quantized groups, string algebras and {G}alois theory for
  algebras}, Operator algebras and applications, Vol.\ 2, London Math. Soc.
  Lecture Note Ser., vol. 136, Cambridge Univ. Press, Cambridge, 1988,
  \mathscinet{MR996454}, pp.~119--172.

\bibitem[Pfe05]{SubgroupsOfS6}
G\"{o}tz Pfeiffer, \emph{The subgroups of {$S_6$}}, 2005, Available at
  \url{http://schmidt.nuigalway.ie/subgroups/s6.pdf}.

\bibitem[Pop94]{MR1278111}
Sorin Popa, \emph{Classification of amenable subfactors of type {II}}, Acta
  Math. \textbf{172} (1994), no.~2, 163--255, \mathscinet{MR1278111},
  \doi{10.1007/BF02392646}.

\bibitem[Pop95]{MR1334479}
\bysame, \emph{An axiomatization of the lattice of higher relative commutants
  of a subfactor}, Invent. Math. \textbf{120} (1995), no.~3, 427--445,
  \mathscinet{MR1334479} \doi{10.1007/BF01241137}.

\bibitem[PT12]{MR2902286}
David Penneys and James~E. Tener, \emph{Subfactors of index less than 5, {P}art
  4: {V}ines}, Internat. J. Math. \textbf{23} (2012), no.~3, 1250017, 18,
  \mathscinet{MR2902286}, \arxiv{1010.3797}, \doi{10.1142/S0129167X11007641}.
  \MR{2902286}

\bibitem[Rez07]{MR2375712}
Sarah~A. Reznikoff, \emph{Coefficients of the one- and two-gap boxes in the
  {J}ones-{W}enzl idempotent}, Indiana Univ. Math. J. \textbf{56} (2007),
  no.~6, 3129--3150, \mathscinet{MR2375712}. \MR{MR2375712 (2010e:46063)}

\bibitem[Sny12]{1207.5090}
Noah Snyder, \emph{A rotational approach to triple point obstructions}, 2012,
  \arXiv{1207.5090}, to appear {Analysis \& PDE}.

\bibitem[Wen87]{MR873400}
Hans Wenzl, \emph{On sequences of projections}, C. R. Math. Rep. Acad. Sci.
  Canada \textbf{9} (1987), no.~1, 5--9, \mathscinet{MR873400}.

\bibitem[Wen88]{MR936086}
\bysame, \emph{Hecke algebras of type {$A_n$} and subfactors}, Invent. Math.
  \textbf{92} (1988), no.~2, 349--383, \mathscinet{MR936086}.

\end{thebibliography}
\end{document}